\newtheorem{remark}{Remark}
\newtheorem{assumption}{Assumption}
\newtheorem{definition}{Definition}
\newtheorem{properties}{Properties}
\newcommand{\be}{\begin{equation}}
\newcommand{\ee}{\end{equation}}
\newcommand{\req}[1]{(\ref{#1})}
\def\ew#1{{{\color{black}#1}}}
\def\fm#1{{{\color{black}#1}}}
\def\sm#1{{{\color{black}#1}}}
\def\sf#1{{{\color{black}#1}}}
\def\ss#1{{{\color{black}#1}}}
\def\re#1{{{\color{black}#1}}}
\DeclarePairedDelimiter\floor{\lfloor}{\rfloor}
\newtheorem{theorem}{Theorem}[section]
\newtheorem{lemma}[theorem]{Lemma}
\newcommand{\norm}[1]{\left|\left|#1\right|\right|}
\providecommand{\boldsymbol}[1]{\mbox{\boldmath $#1$}}
\title{A Fast Distributed Asynchronous Newton-Based Optimization Algorithm $^*$
\thanks{$^*$ Support provided by Leslie and Mac McQuown and DARPA Lagrange} }
\author{Fatemeh Mansoori$^\dag$\thanks{$^\dag$Department of Electrical Engineering and Computer Science, Northwestern University, Email: fatemehmansoori2019@u.northwestern.edu} and Ermin Wei$^\dag$}
\date{}
\begin{document}
\maketitle
\begin{abstract}
One of the most important problems in the field of distributed optimization is the problem of minimizing a sum of local convex objective functions over a networked system. Most of the existing \re{work in this area focus} on developing distributed algorithms in a synchronous setting under the presence of a central clock, where the agents need to wait for the slowest one to finish the update, before proceeding to the next iterate. Asynchronous distributed algorithms remove the need for a central coordinator, reduce the synchronization wait, and allow some agents to compute faster and execute more iterations. In the asynchronous setting, the only known algorithms for solving this problem could achieve either linear or sublinear rate of convergence. In this work, we built upon the existing literature to develop and analyze an asynchronous Newton-based method to solve a penalized version of the problem. We show that this algorithm guarantees almost sure convergence with global linear and local quadratic rate in expectation. Numerical studies confirm superior performance of our algorithm against other asynchronous methods. 
\end{abstract}
\begin{IEEEkeywords}
Optimization algorithms,  Asynchronous algorithms, Network analysis and control, Agents and autonomous systems.
\end{IEEEkeywords}
\section{Introduction}\label{sec:intro}
Along with the advancement of the modern technology, the complexity and size of the problems and datasets are growing rapidly in different areas such as machine learning, signal processing, and sensor networks. As a result, the datasets are too large to be processed on a single processor or they might be collected or stored in a distributed manner. Therefore,  centralized access to the information is not possible and it is crucial to deploy distributed control and optimization algorithms, which rely only on local information, processing, and communication. Distributed optimization algorithms are implemented over a network of connected agents (or processors) , where each agent solves a smaller subproblem \cite{bo11, bot16,  aliLinMorse}, \re{\cite{koppel2015saddle}},\cite{no4,wei2013distributed}. \par
\ss{A fundamental problem requiring distributed optimization is the  problem of minimizing a sum of local objective functions, i.e., $\min_x\sum_{i=1}^nf_i(x)$, where each agent $i$ in the network has access to a component of the objective function, $f_i$. Such a problem can be solved in a distributed way by defining local copies of the decision variable for the agents. Each agent, then, works toward decreasing its local cost function, while keeping its variable equal to those of neighboring agents. An important line of research focuses on developing algorithms to solve this so called {\it consensus} problem \cite{we13, ch16,mota2013d, bo11}}.\par The iterations of a distributed optimization algorithm can run either synchronously or asynchronously. \ss{The agents in a synchronous iterative algorithm can only update their local iterate at predetermined times and must wait for the slowest agent to finish  before proceeding to the next iteration. Thus, they need to have access to a central clock/coordinator, which is not realistic in the distributed setting. In asynchronous implementations, however, the agents update randomly in time using partial and local information and do not need a central coordinator.} One category of asynchronous algorithms called {\it totally asynchronous} can tolerate arbitrary delays in computation and communication, while the other category, {\it partially asynchronous} algorithms, only work under bounded delay assumptions \cite{ts86,be89}. 
\par\sf{In this paper, we propose a totally asynchronous distributed algorithm to solve a variation of the consensus problem. In our asynchronous setting, agents are {\it active}  based on their local clocks and update using possibly outdated information. In order to achieve fast convergence, we employ the second order information to update the iterate.}
\sm{\subsection{Related Work}}
The field of distributed optimization is pioneered by works in \cite{be89} and \cite{tsitsiklis1984problems}. More recently, various synchronous distributed optimization algorithms have been introduced to solve the consensus problem. One class of these algorithms includes primal first order (sub)gradient descent methods \cite{nedic2009distributed,jakovetic2014fast,matei2011performance,shi2015extra,sun2016distributed,di2016next}, gossip based averaging algorithms \ss{based on pairwise information exchange} \cite{bo6,dimakis2010gossip}, coordinate descent methods \cite{richtarik2016parallel,fercoq2014fast} and dual averaging algorithms \cite{du12,tsianos2012push}. Another line of distributed optimization is based on dual decomposition techniques and Alternating Direction Method of Multipliers (ADMM) \cite{bo11,mota2013d,wei2012distributed}. The last category is the Newton-based methods, where the second order information is used to achieve faster convergence \cite {mo15,wei2013distributed,jadbabaie2009distributed, zanella2011newton, mok15}. \ss{In particular, the network Newton algorithm presented in \cite{mo15,mok15} motivated our work in this paper. Network Newton algorithm is a Newton-based distributed synchronous method, which uses the truncated Taylor's series to approximate the Hessian inverse.}
\par Our work in this paper is mostly related to the literature on asynchronous optimization algorithms. We briefly describe some of the key ideas in this area of research. \ss{One main category is the primal gradient-based algorithms.} The authors in \cite{ra10} presented an asynchronous gossip algorithm to solve the consensus problem. In their asynchronous gossip algorithm, each agent has a local Poisson clock. When the clock ticks, the agent becomes active and averages its estimate with a random neighbor and then adjusts the average using the gradient of its local objective. The authors proved almost sure convergence of their algorithm for convex and nonconvex objective functions under the assumption of uncoordinated diminishing stepsizes, which are related to agents' local clocks. Gossip-based algorithms require bidirectional communication between the agents, which is a bottleneck for some applications like wireless networks. The \sf{authors in \cite{ne11} proposed an alternative, which removes this requirement by using random (unidirectional) broadcast and allowing random link failures in agents' communication.} 
The authors proved almost sure convergence of the asynchronous broadcast-based algorithm to the optimal with diminishing stepsize and to a neighborhood of the optimal point while using constant stepsize. Another work in \cite{assran2018asynchronous}, which focuses on solving the consensus problem over a directed graph, presents a subgradient-push algorithm, in which the agents work asynchronously of the others. The authors showed that a subsequence of the iterates at each agent converges to a neighborhood of the global minimum and that the convergence to the global minimizer can be achieved if all the agents work at the same rate. A distributed asynchronous stochastic optimization algorithm has been introduced in \cite{srivastava2011distributed} to solve a constrained version of the consensus problem. The authors established almost sure convergence for their proposed algorithm. In \cite{pe16}, an algorithmic framework for asynchronous parallel coordinate updates, ARock, has been proposed to find a fix point of a non-expansive operator. At each step of the proposed algorithm, an agent updates a randomly selected coordinate \ss{using a non-expensive mapping}. The authors in \cite{pe16} proved that under the assumption of bounded delays, the algorithm converges to a solution almost surely and for quasi-strongly monotone operators, it converges with a linear rate. 
\par Another strand of the asynchronous distributed optimization literature is based on first order primal-dual schemes. 
The authors in \cite{bi15}, developed a randomized primal-dual optimization algorithm using the idea of stochastic coordinate descent and utilized it to solve the distributed optimization problem asynchronously. \ss{The proposed algorithm, DAPD, converges almost surely under the assumption of independent and identically distributed updates.} The authors in \cite{we13} proposed an asynchronous decentralized algorithm based on the classical Alternating Direction Method of Multipliers (ADMM). In their proposed asynchronous scheme, at each iteration, a random constraint is selected, which in turn selects the corresponding components of decision variable. 
The authors proved that the primal iterates generated by asynchronous ADMM algorithm converges almost surely to an optimal solution with a guaranteed convergence rate of $O(\frac{1}{k})$. Another asynchronous distributed ADMM method has been introduced 
in  \cite{cha16} to solve the consensus problem over a network with a master-worker star topology. In the proposed partially asynchronous setting, the master can update using the information from a subset of the workers and the workers updates do not need to be synchronized. The authors proved that for general nonconvex problems, the algorithm converges to a set of KKT points if the algorithm parameters are chosen based on the network delay. In their follow up work \cite{ch16}, the authors showed that under the assumption of strong convexity, the difference between the augmented Lagrangian and the optimal function value converges to zero with a linear rate. The authors in \cite{hajinezhad2016nestt} proposed a primal-dual method, NESTT, for nonconvex distributed stochastic optimization \ss{over a network with a star graph}. One variation of their algorithm, NESTT-E, can be considered as an asynchronous algorithm in the sense that at each iteration, the master sends information to a randomly selected agent and the agent updates its local primal and dual variables accordingly. The proposed algorithm converges almost surely to a stationary point with a sublinear rate. \re{Recently, the authors in \cite{wu2018decentralized} proposed an asynchronous primal-dual algorithm for decentralized consensus optimization with convex and possibly nondifferentiable objective functions. The authors proved that their algorithm converges to the exact solution under both bounded and unbounded delay assumptions.}
\par This paper is closely related to the literature on asynchronous Newton-based algorithms \cite{ei16,bof2017Newton,bajovic2017distributed}. The authors in \cite{ei16} proposed a distributed partially asynchronous quasi-Newton algorithm to solve a penalized version of the consensus problem, where the convex objective functions have bounded Hessian matrices. This algorithm uses a distributed variation of BFGS to approximate the curvature information. The authors established linear rate of convergence for the proposed algorithm. The recent work in \cite{bajovic2017distributed} incorporates the idling mechanism in distributed second order methods. 
The authors proved that for strongly convex objectives, if the agents' activation probabilities converge to one, then the algorithm converges almost surely and it converges with a R-linear rate, if the activation probabilities converge to one with a geometric rate. Recently, in \cite{bof2017Newton}, a Newton-Raphson consensus algorithm is presented for peer-to-peer optimization which is robust to packet losses. The authors proved that their algorithm is locally \ss{geometrically} convergent. 

\sm{\subsection{Our Contribution}}
Although some of the asynchronous distributed algorithms guarantee sublinear or linear convergence rates, to the best of our knowledge, there is no asynchronous distributed optimization algorithm with superlinear convergence rate. In this paper, we consider solving a penalized version of consensus problem to be able to employ the unconstrained optimization techniques.  We focus on developing an asynchronous algorithm for solving this problem under the assumption of bounded Hessian matrices for convex objective functions. Our contribution is to propose a totally asynchronous (with arbitrary delay) Newton-based algorithm, which converges almost surely and achieves global linear and local quadratic rate of convergence in expectation. \sf{More precisely, we prove that the iterates generated by our algorithm approach the optimal value with a quadratic rate within a certain interval.} To obtain superlinear rate, we build our algorithm on the second order methods and the existing literature on distributed Newton method \cite{wei2013distributed, mo15,jadbabaie2009distributed}. \ss{The main challenge in developing distributed Newton-based methods is to compute the Newton direction, which involves the Hessian inverse and cannot be computed in a distributed way directly. Our asynchronous method employs the matrix splitting technique in the literature \cite{cottle1992linear,saad2003iterative,wei2013distributed, mo15} to replace the Hessian inverse with an approximation \cite{gi5,sh14,bertsekas1983projected}}. \par Our paper builds upon the network Newton algorithm presented in \cite{mo15,mok15}. The authors in \cite{mo15,mok15} proved that the iterations of their algorithm converge linearly and go through a quadratic convergence phase \sf{as long as the stepsize of the updates is smaller than some value related to the optimum of the objective function.} The major difference of our approach lies in the novel asynchronous implementation that requires very different analysis tools. \ss{Moreover, we present a different stepsize selection criteria, which is not related to the optimal function value and depends on the activation probabilities of the agents.} 
For the asynchronous implementation, we consider a setting in which the agents are active and update their corresponding variables with different probabilities. \ss{We assume the agents have access to local buffers, which stores the information from their neighbors. In our algorithm, only one agent is active at each iteration, reads the most recent information from its buffer, and carries out the update. The active agent then broadcasts the updated information to its neighbors.} Unlike the algorithm presented in \cite{bajovic2017distributed}, we do not require the activation probabilities to converge to one. Rather, we assume the agents to be active based on a time invariant and not necessarily uniform probability distribution. We have studied the setting with equal activation probabilities in our previous work in \cite{8264076}, which is a special case of the setting in this paper.\par
The rest of this paper is organized as follows: Section II describes the problem formulation. Section III presents the asynchronous network Newton algorithm. Section IV contains the convergence analysis. Section V presents the simulation results that show the convergence speed improvement of our algorithm compared to the existing methods. Section VI contains the concluding remarks. \\
\noindent\textbf{Basic Notation and Notions:}
A vector is viewed as a column vector. For a matrix $A$, we write $A_{ij}$ to denote the component of $i^{th}$ row and $j^{th}$ column. \sm{We denote by $\mu_{min}(A)$ and $\mu_{max}(A)$ the smallest and largest eigenvalues of a symmetric matrix $A$. Also, for a symmetric matrix $A$, $aI\preceq A\preceq bI$ means that the eigenvalues of $A$ lie in $[a,b]$ interval. For two symmetric matrices $A$ and $B$ we use $A\preceq B$ if and only if $B-A$ is positive semidefinite.} 
For a vector $x$, $x_i$
denotes the $i^{th}$ component of the vector.
We use $x'$ and $A'$ to
denote the transpose of a vector $x$ and a matrix $A$ respectively.
 We use standard Euclidean norm (i.e., 2-norm) unless otherwise noted, i.e., for a vector $x$ in $\mathbb{R}^n$, $\norm{x}=\left(\sum_{i=1}^n x_i^2\right)^{\frac{1}{2}}$. The notation $\mathbbm{1}$ represents the vector of all $1's$ and notation $\textbf{0}$ denotes zero matrix. For a real-valued function $f:\mathbb{R}\rightarrow \mathbb{R}$, the gradient vector and the Hessian
 matrix of $f$ at $x$  are denoted by $\nabla f(x)$ and
 $\nabla^2 f({x})$ respectively.
\section{Problem Formulation}\label{sec:model}
We consider the setup where $n$ agents are connected by an undirected static graph $\mathcal{G(V,E)}$ with $\mathcal{V}$ and $\mathcal{E}$ being the set of vertices and edges respectively. \sm{We denote by $\mathcal{N}_{i}$ the set of neighbors of agent $i$ in the underlying network, i.e., $j\in\mathcal{N}_i$  if and only if $(i,j)\in\mathcal{E}$.} The system-wide goal is to collectively solve the following  problem:
\begin{equation}
\begin{aligned}
 \min_x\quad \frac{1}{2}x^{\prime}(I-W)x+\alpha\sum_{i=1}^{n}f_{i}(x_{i})\,,\label{optFormulation}
\end{aligned}
\end{equation}
\noindent \re{where each function $f_{i}:\mathbb{R}\to\mathbb{R}$ is twice differentiable and convex. Matrix $I$ is the identity matrix of size $n$ by $n$, $x=[x_{1},\,x_{2},\,...,\,x_{n}]^\prime\in\mathbb{R}^{n}$, $\alpha>0$ is a positive scalar,  and the consensus matrix $W\in\mathbb{\mathbb{R}}^{n\times n}$ is a symmetric nonnegative matrix with the
following properties:}
\begin{equation*}
\begin{aligned}
W'& =W,\quad & W\mathbbm{1}&=\mathbbm{1},\quad
\mbox{null}\{I-W\}=\re{\mbox{span}\{\mathbbm{1}\}},\quad & 0\leq& W_{ij}
<1\,.\label{eq:3}
\end{aligned}
\end{equation*}  
Moreover, matrix $W$ represents the network topology, where $W_{ij}\neq 0$ if and only if agents $i$ and $j$ are connected in the underlying network graph. In \sm{our} distributed setting, each agent $i\in\{1, 2, ...,n\}$ has access to a local decision variable $x_{i}\in\mathbb{R}$, its local cost function $f_i$,  and local positive weights $W_{ij}$ for $j$ in $\mathcal{N}_i$, and can communicate with its neighbors defined by the graph. \re{ We denote by $F: \mathbb{R}^n\to\mathbb{R}$ the objective function}, i.e., 
\be\label{eq:defF}F(x) = \frac{1}{2}x^{\prime}(I-W)x+\alpha\sum_{i=1}^{n}f_{i}(x_{i}). \ee

\re{We study problem (\ref{optFormulation}) , because it can be viewed as an approximation to a constrained distributed optimization problem, where the
objective function is a sum of local convex cost functions, i.e.,}
\begin{equation}
\begin{aligned}
\min_{x}&\quad\sum_{i=1}^{n}f_{i}(x_{i})\,,\\ \mbox{s.t.}&\quad x_{i}=x_{j}\,\,\,\forall\,(i,\,j)\,\in\mathcal{E}\,.\label{consensusFormulation}
\end{aligned}
\end{equation}
\re{\ss{Problem (\ref{consensusFormulation}) is the equivalent distributed formulation of the problem $\min_x\sum_{i=1}^nf_i(x)$, which appears in different applications such as machine learning, sensor networks, and wireless systems.} The term $\frac{1}{2}x^{T}(I-W)x$ in problem \req{optFormulation} is equivalent to the penalty on constraint violation in problem \req{consensusFormulation}, because any $x = [x_i]_i$ feasible to problem (\ref{consensusFormulation}) satisfies $Wx = Ix$. The scalar $\alpha$ represents the weight of objective function relative to penalty on constraint violation. In this paper we focus on solving problem \req{optFormulation} considering a fixed penalty constant, $\alpha$. We note that for a fixed $\alpha$ the solutions of problems \req{optFormulation} and \req{consensusFormulation} are not the same and the gap between the solutions is of $O(\alpha)$.  Convergence to the solution of problem \req{consensusFormulation} can be achieved by decreasing the penalty constant \cite{yuan2016convergence,nocedal2006nonlinear}. 
 }
\begin{remark}
For representation simplicity, we focus on the case where $x_i$ is in $\mathbb{R}$. Our results in this paper can be easily generalized to multidimensional case.
\end{remark}
 \sf{We denote by $x^*$ the minimizer of problem (\ref{optFormulation}) and by $F^*=F(x^*)$ the minimum objective function value.} 
 We adopt the following standard assumptions on problem \eqref{optFormulation}.
\begin{assumption}[Bounded Hessian]\label{assm:BoundedHessian} The local objective functions $f_{i}(x)$ are convex,
	twice continuously differentiable with bounded Hessian, i.e. for all $x_i$ in $\mathbb{R}$\[
	0<m\leq\nabla^{2}f_{i}(x_i)\leq M<\infty.\]
\end{assumption}
\begin{assumption} [Lipschitz Hessian]\label{assm:LipHessian} The Hessian matrices of local objective functions, $\nabla^{2}f_{i}(x_i)$, are L-Lipschitz continuous, i.e., for all $x_i,\,\bar{x}_i$ in $\mathbb{R}$,
\[	\norm{ \nabla^{2}f_{i}(x_i)-\nabla^{2}f_{i}(\bar{x}_i)} \leq L\norm{ x_i-\bar{x}_i}.\]
\end{assumption}
\begin{assumption} [Bounded Consensus Matrix Weight]\label{assm:Consensus} There exist positive scalars $\delta$ and $\Delta$ with $0<\delta\leq \Delta<1$, such that the diagonal elements of the consensus matrix $W$ satisfy
\[	\delta\leq W_{ii}\leq\Delta,\,\,\,\,i=1,\,2,\,...,\,n\,.\]\end{assumption}
The first assumption requires that the eigenvalues of the Hessian matrix are bounded with two positive numbers, which is true if and only if the objective functions are $m-$strongly convex \sm{and have $M-$Lipschitz gradients.} The second assumption states that the Hessian does not change too fast. Both of these assumptions are standard conditions on the local objective functions for developing Newton-based algorithms \cite{boydbook}. The last assumption on matrix $W$ is satisfied by many standard choices of consensus matrices \cite{nedic2009distributed,ts86,xiao2005scheme}, \sm{we note that, considering the definition of matrix $W$, the upper bound on the diagonal elements $\Delta$ is guaranteed to be less than one.}
 
These assumptions hold in this paper and our goal is to design an asynchronous distributed Newton-based algorithm, with superlinear rate of convergence, to solve problem \req{optFormulation}.
\section{Asynchronous Network Newton Method}\label{sec:alg}

Our asynchronous algorithm is based on Newton's method for unconstrained problem with the following iteration
\[
x(t+1)=x(t)+\varepsilon d(t),
\]
where the notation $(t)$ indicates the iteration \sm{count}, $\varepsilon$ is some positive stepsize and $d(t)$ is the Newton direction which is equal to \[d(t)=-H(t)^{-1}g(t),\]  with $g$ and $H$ being the gradient and Hessian of objective function respectively, i.e., $g(t)=\nabla F(x(t))$ and $H(t)=\nabla^{2}F(x(t))$. By using the definition of function F [c.f.\ Eq. \req{eq:defF}], we have that each component of gradient $g$ is given by
\be\label{eq:gradient} g_i(t)=[(I-W)x(t)]_i+\alpha \nabla f_{i}(x_{i}(t)).\ee The Hessian matrix $H$ can be written as
\be \label{eq:defH} H(t)=I-W+\alpha G(t),\ee
 where $G(t)\in\mathbb{R}^{n\times n}$ is a diagonal matrix
with 
\be\label{eq:defG}G_{ii}(t)=\nabla^{2}f_{i}(x_{i}(t)).\ee 

\subsection{Background on Approximation of the Newton Direction}
\sm{In this section}, we first outline the method used in \cite{mo15} to solve the same problem in a synchronous distributed way, we then introduce our asynchronous version of this algorithm. The authors of \cite{mo15} represented the Hessian inverse as a convergent series of matrices, where each of the terms can be computed locally. The algorithm approximates the inverse of Hessian matrix by using a finite truncated summation of the terms. 

The Hessian matrix $H$ [c.f.\ Eq. \eqref{eq:defH}] is splitted as follows,
\be\label{eq:Hsplit}
H(t)=D(t)-B,\ee
with
\be
D(t)=\alpha G(t)+2(I-W_{d}),
\quad B=I-2W_{d}+W,\label{eq:defDB}\ee
where  $W_{d}$ is a diagonal matrix with $[W_d]_{ii} = W_{ii}$. 
Matrix $G(t)$ is a positive definite matrix because of  the assumption that the local functions have bounded second derivative [c.f.\ Assumption \ref{assm:BoundedHessian}]. By Assumption \ref{assm:Consensus}, $[W_d]_{ii}=[W]_{ii}<1$ and thus $I-W_{d}$ is also positive definite. Therefore, the diagonal matrix $D(t)$ is positive definite and thus invertible. By factoring $D(t)^{1/2}$ on both sides of Eq.\ \eqref{eq:Hsplit}, we have 
\[ H(t) = 
D(t)^{1/2}\big(I-D(t)^{-1/2}BD(t)^{-1/2}\big)
D(t)^{1/2},\] which implies that 
\[ H(t)^{-1} = 
D(t)^{-1/2}(I-D(t)^{-1/2}BD(t)^{-1/2})^{-1}
D(t)^{-1/2}.\]The middle inverse term can be written as
\begin{align*}\big(I-D(t)^{-1/2}BD(t)^{-1/2}\big)^{-1} = \sum_{k=0}^\infty \big(D(t)^{-1/2}BD(t)^{-1/2}\big)^k,\end{align*} whenever spectral radius (largest eigenvalue by magnitude) of matrix $D(t)^{-1/2}BD(t)^{-1/2}$ is strictly less than 1, \re{Chapter 5.6 of} \cite{horn1985matrix}. Using the particular structure of matrices $D$ and $B$, the following lemma from \cite{mo15} guarantees that the spectral radius of matrix $D(t)^{-1/2}BD(t)^{-1/2}$ is strictly less than 1.
\begin{lemma}  \label{proposition 2} Under Assumptions \ew{\ref{assm:BoundedHessian} and \ref{assm:Consensus}}, $D(t)^{-1/2}BD^{-1/2}$
	is positive semi-definite
	with bounded eigenvalues which are strictly less than 1, i.e., 
	\begin{equation*}
	\begin{aligned}
	\boldsymbol{0}\preceq D(t)^{-1/2}BD(t)^{-1/2}\preceq\rho I\,,\label{eq:26}
	\end{aligned}
	\end{equation*}
 where $\rho=2(1-\delta)/(2(1-\delta)+\alpha m)<1$.
\end{lemma}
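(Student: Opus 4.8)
The plan is to prove the two matrix inequalities $\mathbf{0}\preceq B$ and $B\preceq\rho D(t)$ separately and then conjugate both by the symmetric positive definite matrix $D(t)^{-1/2}$ (which exists because $D(t)\succ 0$, as already observed in the text), turning them into $\mathbf{0}\preceq D(t)^{-1/2}BD(t)^{-1/2}\preceq\rho I$. Once the second inequality is in hand, $\rho<1$ is immediate since $\alpha m>0$.

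For the lower bound I would argue directly from the entrywise form of $B=I-2W_{d}+W$: its diagonal entries are $1-W_{ii}$, its off-diagonal entries are $W_{ij}\ge 0$, and since $W$ is row-stochastic we have $\sum_{j\ne i}W_{ij}=1-W_{ii}$. Hence, for every $z\in\mathbb{R}^{n}$,
\[ z'Bz=\sum_{i}(1-W_{ii})z_{i}^{2}+\sum_{i\ne j}W_{ij}z_{i}z_{j}=\sum_{i<j}W_{ij}(z_{i}+z_{j})^{2}\ge 0, \]
so $B\succeq 0$. (Equivalently, $B$ is symmetric, has nonnegative diagonal, and is weakly diagonally dominant, so Gershgorin's theorem confines all of its eigenvalues to $[0,\infty)$.)

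For the upper bound I would expand $\rho D(t)-B$ using the definitions of $D(t)$ and $B$ in \eqref{eq:defDB}, obtaining
\[ \rho D(t)-B=\rho\alpha G(t)+(2\rho-1)I+2(1-\rho)W_{d}-W, \]
and then bound each term below by a multiple of the identity. Assumption \ref{assm:BoundedHessian} gives $G(t)\succeq mI$, Assumption \ref{assm:Consensus} gives $W_{d}\succeq\delta I$, and the defining properties of $W$ (symmetric, nonnegative, row sums equal to one, hence eigenvalues in $[-1,1]$) give $W\preceq I$; since $\rho\in(0,1)$, all these coefficients carry the correct sign, so
\[ \rho D(t)-B\succeq\big[\rho\alpha m+(2\rho-1)+2(1-\rho)\delta-1\big]I=\big[\rho\big(\alpha m+2(1-\delta)\big)-2(1-\delta)\big]I. \]
The bracketed scalar vanishes precisely when $\rho=2(1-\delta)/\big(2(1-\delta)+\alpha m\big)$, which is the stated value; hence $\rho D(t)-B\succeq 0$, i.e.\ $B\preceq\rho D(t)$, and conjugating by $D(t)^{-1/2}$ completes the proof.

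The computation is essentially routine; the point to be careful about --- and the reason the statement is useful downstream --- is that neither the Hessian upper bound $M$ nor the weight upper bound $\Delta$ enters any of the estimates, since only the lower bounds $G(t)\succeq mI$ and $W_{d}\succeq\delta I$ are used, which is exactly why $\rho$ is independent of the iteration index $t$. It is also worth double-checking that the inequality $W\preceq I$ is legitimately available from the stated properties of the consensus matrix in the problem formulation rather than from Assumptions \ref{assm:BoundedHessian}--\ref{assm:Consensus}.
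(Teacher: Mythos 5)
Your proposal is correct, but note that the paper does not actually prove this lemma: it is imported verbatim from \cite{mo15}, so there is no in-paper argument to compare against. Your self-contained proof is sound on every step I checked: the identity $z'Bz=\sum_{i<j}W_{ij}(z_i+z_j)^2$ follows from $B_{ii}=1-W_{ii}=\sum_{j\neq i}W_{ij}$ and the symmetry of $W$, giving $B\succeq \boldsymbol{0}$; the expansion $\rho D(t)-B=\rho\alpha G(t)+(2\rho-1)I+2(1-\rho)W_d-W$ is algebraically right; the sign conditions you need ($\rho\alpha>0$, $1-\rho>0$) hold; $W\preceq I$ is indeed available because $W$ is symmetric with $\norm{W}_\infty=1$, hence spectral radius at most $1$ --- and you are right that this comes from the defining properties of the consensus matrix rather than from Assumptions \ref{assm:BoundedHessian}--\ref{assm:Consensus}; and congruence by $D(t)^{-1/2}$ preserves the semidefinite ordering. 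The route differs cosmetically from the one in \cite{mo15}, which bounds the generalized Rayleigh quotient $z'Bz/z'Dz$ by first observing $B\preceq 2(I-W_d)$ and then using the monotonicity of $u\mapsto u/(\alpha m\norm{z}^2+u)$ to reach the same $\rho$; your version instead verifies $B\preceq\rho D(t)$ by a single term-by-term lower bound on $\rho D(t)-B$, which is arguably more transparent and makes it explicit, as you observe, that only the lower bounds $m$ and $\delta$ enter, so $\rho$ is independent of $t$. Either way the constant is tight for the estimate used, since the bracketed scalar vanishes exactly at the stated $\rho$.
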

 Hence, the Hessian inverse is equal to 
\begin{align}\label{eq:inverse}&
H(t)^{-1}=D(t)^{-1/2}\sum_{k=0}^{\infty}\big(D(t)^{-1/2}BD(t)^{-1/2}\big)^{k}D(t)^{-1/2}.
\end{align}
Therefore, the Newton direction can be written as 
\begin{align}\label{eq:dCom}& d(t) = -D(t)^{-1/2}\sum_{k=0}^{\infty}\big(D(t)^{-1/2}BD(t)^{-1/2}\big)^{k}D(t)^{-1/2} g(t).\end{align}

We now check the distributed implementation of the above equation following the same analysis as in \cite{mo15}. We note that each of the diagonal elements of $D(t)$ can be computed locally at each node $i$ as 
\begin{equation*}
D_{ii}(t)=\alpha\nabla^{2}f_{i}(x_{i}(t))+2(1-W_{ii}).
\end{equation*} Moreover, elements of matrix $B$ satisfy
\[B_{ii}=1-2W_{ii}+W_{ii} = 1-W_{ii},\quad B_{ij}=W_{ij},\]  which can also be computed using local information available to agent $i$. The multiplication by diagonal matrix $D(t)^{-1/2}$ is effectively scaling using local information and multiplication of matrix $B$ corresponds to communicating with immediate neighbors, and both can be carried out locally. The $k^{th}$ order term in Eq. \eqref{eq:dCom}, can be computed via $k$ local neighborhood information exchanges, i.e., information from neighbors of $k-$hop away. Hence, the Newton direction $d$ can be computed using local information. However, due to the computation limitation, \sm{the authors in \cite{mo15} proposed to} truncate the series to include only finite number of terms and form an approximation of the Newton direction, which results in the network Newton algorithm presented in \cite{mo15}. 

\subsection{Asynchronous Network Newton}

Based on the \sm{results} from the previous section, we can now develop our asynchronous network Newton algorithm. We assume that at each iteration $t$, each agent $i$ is active with probability $p_i$. \sm{The active agent} updates its corresponding variable using local information and information from immediate neighbors to compute its local Newton direction. We assume that each agent is active infinitely often in time. When we are only concerned with the total number of updates (instead of total time elapsed), we can equivalently count the number of iterates by increasing the iteration counter by one, whenever any agent is active. \sf{ We emphasize that each agent does not need a counter of the iteration number. Instead, it simply needs to maintain the most updated information of itself and its neighbors.} \re{We assume that one$-$hop neighbors of the active agent are notified and can perform some basic computations.} \sf{When an agent is not active, we assume that it may still receive information. \begin{algorithm}
\caption{Asynchronous Network Newton}\label{async NN}
\begin{algorithmic}[1]
\State Initialization:  
For $i=1, 2, ..., n$, each agent $i$ sets $x_{i}(0)=0$, computes $D_{ii}(0), g_i(0), d_i(0), B_{ii}, B_{ij}$:
\[D_{ii}(0)=\alpha\nabla^{2}f_{i}(x_{i}(0))+2(1-W_{ii}),\quad
g_{i}(0)=(1-W_{ii})x_{i}(0)+\alpha\nabla f_{i}(x_{i}(0)),\] 
\[d_{i}^{(0)}(0)=-D_{ii}(0)^{-1}g_{i}(0),\quad
B_{ii}=1-W_{ii}, \quad B_{ij}=W_{ij},\]
and broadcasts $d_i^{(0)}(0)$ and stores received $d_j^{(0)}$, $x_j$ values from neighbors and determines stepsize parameter $\varepsilon$.
\For{$t=1,2,...$}
\State An agent $i\in \left\{ 1,2,...,n\right\}$ is active according to its local clock with probability $p_i$.
\State Active agent $i$ computes  $g_i(t-1)$, $d_{i}^{(0)}(t-1)$ \ew{ and the local Newton direction $d_i(t-1)$ using the most recent information from neighbors, $x_j(t-1)$ and $d_j^{(0)}(t-1)$ for $j$ in $\mathcal{N}_i$} as 
\[g_{i}(t-1)=(1-W_{ii})x_{i}(t-1)+\alpha\nabla f_{i}(x_{i}(t-1))-\sum_{j\in\mathcal{N}_{i}}W_{ij}x_{j}(t-1),\]
\[d_{i}^{(0)}(t-1)=-D_{ii}(t-1)^{-1}g_{i}(t-1),\]
\[d_{i}(t-1)=D_{ii}(t-1)^{-1}\bigl[B_{ii}d_{i}^{(0)}(t-1)-g_{i}(t-1)+\sum_{j\in\mathcal{N}_{i}}B_{ij}d_{j}^{(0)}(t-1)\bigr]
.\]
\State Active agent $i$ takes a Newton step and updates its local iterate by
\[x_{i}(t)=x_{i}(t-1)+\frac{\varepsilon}{p_i} d_{i}(t-1).\]
\State Active agent updates $D_{ii}(t),$ $ g_i(t),$ and $d_i^{(0)}(t)$ by  
\[D_{ii}(t)=\alpha\nabla^{2}f_{i}(x_{i}(t))+2(1-W_{ii}),\]
\[g_{i}(t)=(1-W_{ii})x_{i}(t)+\alpha\nabla f_{i}(x_{i}(t))-\sum_{j\in\mathcal{N}_{i}}W_{ij}x_{j}(t-1),\]
\[d_{i}^{(0)}(t)=-D_{ii}(t)^{-1}g_{i}(t).\]
\State Active agent $i$ broadcasts $d_{i}^{(0)}(t)$ and $x_i(t)$ to its neighbors. 
\State \re{All agents $j\in\mathcal{N}_i$, listen and store received $d_i^{(0)}(t)$ and $x_i(t)$, update $g_j(t)$ similar to step $4$ and $d_j^{(0)}(t)$ similar to step $6$, and broadcast $d_j^{(0)}(t)$ to their neighbors.} 
\State \re{All inactive agents $l\in\mathcal{N}_j$ passively listen and store received  $d_j^{(0)}(t)$ values from $j\in\mathcal{N}_i$. All other variables remain at their previous values.}
\EndFor
\State \textbf{end for}
\end{algorithmic}
\end{algorithm} This can be achieved by maintaining a buffer for each agent in which the old information is overwritten whenever new information is received from the neighbors. When an agent is active, it reads the most recent information from its buffer.} \par In order to take into account the different activation probabilities, we assume that each agent's stepsize is inversely proportional to its activation probability, which essentially means that the agent that is active less often, uses bigger stepsize. One way to implement this process is to assume that each agent is associated with a Poisson clock, which ticks according to a Poisson process. \ss{Having Poisson clocks is a standard assumption in implementing asynchronous algorithms \cite{ra10, ne11,pe16}.} The clocks do not need to have same parameters and they are independent from each other. In this case, we can assume that in the initialization step the agents communicate their Poisson rates, so that the summation of the rates is known to all agents. Therefore, each agent can compute its activation probability by dividing its own rate by the summation of the rates and determines its stepsize accordingly.
\par\ss{We assume that only one clock ticks at each iteration, which is a natural assumption for the Poisson clocks, and also the clock activation happens on a slower time scale than the agents update. These assumptions  imply that only one agent is active at each iteration and finishes the update before another activation happens.} This type of asynchronous algorithm is also known as {\it randomized} algorithm. Our algorithm is totally asynchronous, in the sense that it does not assume each agent updates at least once within a certain bounded number of iterations \cite{be89}.
We adopt the following assumption on activation probabilities. 
\begin{assumption}\label{ass:activeprob}
The activation probabilities for all agents $i\in\{1, 2, ..., n\}$ satisfy
\[0<\pi\leq p_i\leq\Pi<1.\] 
\end{assumption}
\sm{We note this assumption is automatically satisfied due to the fact that every agent updates infinity often in time.} We also have $\sum_{i=1}^np_i=1$. 
We denote by $P$ the \sf{time invariant} diagonal matrix with the diagonal elements equal to the probabilities $p_i$, so we have
\[\pi I\preceq P\preceq \Pi I.\]
\sm{By the nature of the asynchronous distributed algorithm, we can only compute the $0^{th}$ and $1^{st}$ order terms in the Hessian inverse formula, [c.f Eq. (\ref{eq:inverse})].} We denote by $\hat H(t)^{-1}$ the approximation of Hessian inverse using the first two terms of the infinite series, i.e., 
\be\label{eq:defHatH}\hat{H} (t)^{-1}=D(t)^{-1/2}\big[I+D(t)^{-1/2}BD(t)^{-1/2}\big]D(t)^{-1/2}.\ee
Resulting in the Newton direction approximation defined by 
 \be\label{eq:defd}d(t) = -\hat{H}(t)^{-1}g(t).\ee The asynchronous network Newton algorithm is given in Algorithm \ref{async NN}. We note that using Eq. (\ref{eq:defHatH}) and Eq. (\ref{eq:defd}), the Newton step in our algorithm can be expressed as \[d(t)=-D(t)^{-1}g(t)-D(t)^{-1}BD(t)^{-1}g(t).\] We denote by $d^{(0)}(t)$  the Newton direction in which the Hessian matrix is approximated using the $0^{th}$ order term of the Taylor's expansion, i.e., $d^{(0)}(t)=-D(t)^{-1}g(t)$ . Therefore, the Newton direction is equal to\[d(t)=D(t)^{-1}\big(Bd^{(0)}(t)-g(t)\big).\]
Note that $D(t)$ is diagonal and $B$ is representing the underlying graph of the network, the Newton direction for each agent can be written as \[d_{i}(t)=D_{ii}(t)^{-1}\bigl[B_{ii}d_{i}^{(0)}(t)-g_{i}(t)+\sum_{j\in\mathcal{N}_{i}}B_{ij}d_{j}^{(0)}(t)\bigr],
\]where $g_i(t)$ is computed using Eq. (\ref{eq:gradient}).\par We next verify that the algorithm can indeed be implemented in an asynchronous distributed way. In this algorithm, in the initialization step, each agent $i$ computes $D_{ii}(0)$, $g_i(0)$ and $d_i^{(0)}(0)$ using local information, broadcasts $x_i(0)$ and $d_i^{(0)}(0)$ and receives those of neighbors by utilizing its own buffer. At each iteration $t$, a random agent $i$ is active with probability $p_i$ \ss{and has access to $W_{ii}$, $W_{ij}$, $x_i(t-1)$, $\nabla f_i(x_i(t-1))$, $d_i^{(0)}(t-1)$, $D_{ii}(t-1)$, and also $x_j(t-1)$, and $d_j^{(0)}(t-1)$ from its neighbors $j\in\mathcal{N}_i$. Then in step $4$ of Algorithm \ref{async NN}, the active agent $i$ computes $g_i(t-1)$ using the local information $W_{ii}$, $W_{ij}$, $x_i(t-1)$, and $\nabla f_i\big(x_i(t-1)\big)$, and $x_j(t-1)$ from its neighbors. Then it computes $d_i^{(0)}(t-1)$ using $D_{ii}(t-1)$ and $g_i(t-1)$, and uses the updated $d_i^{(0)}(t-1)$ and also the most recent $d_j^{(0)}(t-1)$ form $j\in\mathcal{N}_i$ to compute the approximated Newton direction $d_i(t-1)$. The active agent computes the next iterate $x_i(t)$ in step $5$ and uses the new $x_i(t)$ to update $D_{ii}(t)$, $g_i(t)$, and $d_i^{(0)}(t)$ in step $6$. Once the active agent finishes its iterate, it broadcasts updated information $d_i^{(0)}(t)$ and $x_i(t)$ to its neighbors in step $7$. The agent $j\in\mathcal{N}_i$ receives this information from active agent $i$, \re{updates $g_j(t)$ and $d_j^{(0)}(t)$ using the new information} and keeps previous values of $D_{jj}(t-1)$ and $x_j(t-1)$. \re{Agent $j\in\mathcal{N}_i$ broadcasts its most recent $d_j^{(0)}(t)$ to its neighbors. We note that in this implementation, one$-$hop neighbors of the active agent, i.e., $j\in\mathcal{N}_i$, are not completely passive. They are notified by the active agent and update $g_j(t)$ and $d_j^{(0)}(t)$.}} 
\sf{\begin{remark} \label{unifrom_case}
One special case is the uniform activation, in which all the agents are active with equal probabilities. In this case, there is no need to scale the agents' stepsizes with the inverse of their activation probabilities, i.e., in step $5$ of Algorithm \ref{async NN}, the active agent use $\varepsilon$ instead of $\frac{\varepsilon}{p_i}$. This case is studied in \cite{8264076} and all the results there, are special cases of our convergence analysis in this paper.
\end{remark}}
\section{Convergence Analysis}\label{sec:conv}
In this section, we present some existing preliminaries in Section \ref{sec:prel}, which we use to show almost sure and global linear rate of convergence of the proposed asynchronous method in Section \ref{sec:convAsync} and also to establish local quadratic rate of convergence (in expectation) in Section \ref{sec:Quad}. 

\subsection{Preliminaries}\label{sec:prel}
We state three lemmas which are adopted from synchronous network Newton method proposed in \cite{mo15}. These lemmas \ew{have been proven} in \cite{mo15} \ew{only using} the properties of the local objective functions and the consensus matrix $W$ \ew{and are not dependent to the algorithm implementation. We restate them here for completeness.}
\begin{lemma} \label{lemma 01}  If Assumption \ref{assm:LipHessian} holds, then for every $x, \bar{x}\in\mathbb{R}^{n}$\ew{,} the Hessian matrix, $H(x)=\nabla^{2}F(x)$ , is $\alpha L$-Lipschitz
continuous, i.e.,

\begin{equation*}
\norm{ H(x)-H(\bar{x})} \leq\alpha L\norm{ x-\bar{x}}.
\end{equation*}
\end{lemma}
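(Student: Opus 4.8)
The plan is to use the explicit structure of the Hessian derived in Eq.~\eqref{eq:defH}, namely $H(x) = I - W + \alpha G(x)$, where $G(x)$ is the diagonal matrix with $G_{ii}(x) = \nabla^2 f_i(x_i)$ as in Eq.~\eqref{eq:defG}. Since the term $I-W$ does not depend on $x$, it cancels in the difference, so that $H(x) - H(\bar x) = \alpha\big(G(x) - G(\bar x)\big)$. Thus the claim reduces to bounding $\norm{G(x) - G(\bar x)}$ by $L\norm{x - \bar x}$.

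The main step is then to control the spectral norm of the diagonal matrix $G(x) - G(\bar x)$, whose $i$-th diagonal entry is $\nabla^2 f_i(x_i) - \nabla^2 f_i(\bar x_i)$. For a diagonal matrix the 2-norm equals the largest absolute value of its diagonal entries, so
\[
\norm{G(x) - G(\bar x)} = \max_{i} \big| \nabla^2 f_i(x_i) - \nabla^2 f_i(\bar x_i) \big|.
\]
Applying Assumption~\ref{assm:LipHessian} entrywise gives $\big|\nabla^2 f_i(x_i) - \nabla^2 f_i(\bar x_i)\big| \leq L\,|x_i - \bar x_i|$ for each $i$. Since $|x_i - \bar x_i| \leq \norm{x - \bar x}$ for every component, we get $\norm{G(x) - G(\bar x)} \leq L\norm{x - \bar x}$, and multiplying through by $\alpha$ yields $\norm{H(x) - H(\bar x)} \leq \alpha L\norm{x - \bar x}$, as desired.

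There is essentially no serious obstacle here; the only point requiring a little care is the passage from the per-coordinate Lipschitz bounds in Assumption~\ref{assm:LipHessian} (stated for scalar functions $f_i : \mathbb{R}\to\mathbb{R}$) to the matrix norm of the block-diagonal difference, which is handled cleanly by the fact that the spectral norm of a diagonal matrix is the max of the magnitudes of its entries together with the trivial bound $|x_i - \bar x_i| \leq \norm{x - \bar x}$. One could alternatively bound $\max_i |x_i - \bar x_i|$ more tightly, but the stated constant $\alpha L$ already follows from the crude estimate, so no sharper argument is needed.
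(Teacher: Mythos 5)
Your proof is correct and is essentially the standard argument: the paper itself omits the proof of this lemma, deferring to \cite{mo15}, where exactly this reasoning is used — the $I-W$ term cancels, the difference $\alpha\big(G(x)-G(\bar x)\big)$ is diagonal, its spectral norm is the largest diagonal entry in magnitude, and Assumption \ref{assm:LipHessian} plus $|x_i-\bar x_i|\leq\norm{x-\bar x}$ finish the bound. Nothing is missing.
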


\begin{lemma} \label{lemma 1} If Assumptions \ref{assm:BoundedHessian},\ref{assm:LipHessian} and \ref{assm:Consensus} hold, starting from any initialization, the eigenvalues
of $H(t)$ , $D(t)$ , and $B$ [c.f.\ Eqs.\ \eqref{eq:defH}, \eqref{eq:defDB}] are bounded for all $t$ by
\begin{align*}
\alpha mI\preceq & H(t) \preceq(2(1-\delta)+\alpha M)I,\\
(2(1-\Delta)+\alpha m)I&\preceq D(t) \preceq(2(1-\delta)+\alpha M)I,\\
\boldsymbol{0} &\preceq B\preceq 2(1-\delta)I.
\end{align*}
\end{lemma}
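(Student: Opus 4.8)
The plan is to bound each of $H(t)$, $D(t)$, and $B$ separately, relying only on the diagonal structure of $G(t)$ and $W_d$, the doubly stochastic structure of $W$, and Assumptions~\ref{assm:BoundedHessian} and \ref{assm:Consensus}; nothing about the asynchronous algorithm itself is needed.

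I would start with $B = I - 2W_d + W$ [c.f.\ Eq.~\eqref{eq:defDB}]. The key computation is a quadratic-form identity: for any $x\in\mathbb{R}^n$, expanding $x'Bx$ and using the row-sum property $W\mathbbm{1}=\mathbbm{1}$ (so $1-W_{ii}=\sum_{j\neq i}W_{ij}$) together with $W'=W$ yields $x'Bx=\tfrac12\sum_{i\neq j}W_{ij}(x_i+x_j)^2$. Since $W_{ij}\geq 0$, the right-hand side is nonnegative, so $\mathbf{0}\preceq B$. For the upper bound, I would bound $(x_i+x_j)^2\leq 2(x_i^2+x_j^2)$, collapse the sum back using $\sum_{j\neq i}W_{ij}=1-W_{ii}$, and apply Assumption~\ref{assm:Consensus} in the form $1-W_{ii}\leq 1-\delta$ to get $x'Bx\leq 2(1-\delta)\norm{x}^2$, i.e.\ $B\preceq 2(1-\delta)I$. (Equivalently, one may observe that $B$ is symmetric and diagonally dominant with nonnegative diagonal and row sums $2(1-W_{ii})$, and invoke Gershgorin's theorem.)

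Next, for $D(t)=\alpha G(t)+2(I-W_d)$, both summands are diagonal, so it suffices to bound their diagonal entries. By Assumption~\ref{assm:BoundedHessian} the entries of $\alpha G(t)$ lie in $[\alpha m,\alpha M]$, and by Assumption~\ref{assm:Consensus} the entries of $2(I-W_d)$ equal $2(1-W_{ii})\in[2(1-\Delta),2(1-\delta)]$; adding entrywise gives $(2(1-\Delta)+\alpha m)I\preceq D(t)\preceq(2(1-\delta)+\alpha M)I$.

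Finally, for $H(t)=I-W+\alpha G(t)$ [c.f.\ Eq.~\eqref{eq:defH}]: the lower bound follows by discarding $I-W$, which is positive semidefinite — the same trick gives $x'(I-W)x=\tfrac12\sum_{i\neq j}W_{ij}(x_i-x_j)^2\geq 0$, or one uses that $W$ is symmetric stochastic hence has spectral radius $1$ — leaving $H(t)\succeq\alpha G(t)\succeq\alpha m I$. For the upper bound I would use the splitting $H(t)=D(t)-B$ from Eq.~\eqref{eq:Hsplit}: since $B\succeq 0$, we get $H(t)\preceq D(t)\preceq(2(1-\delta)+\alpha M)I$ by the bound on $D(t)$ just proved. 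No single step is a genuine obstacle; the only point requiring care is obtaining the \emph{tight} constant $2(1-\delta)$ (a crude estimate such as $I-W\preceq 2I$ would lose the factor $1-\delta$), which is exactly why the argument for $H(t)$ should be routed through $D(t)-B$ and the argument for $B$ through the $(x_i+x_j)^2$ identity.
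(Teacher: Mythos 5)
Your proof is correct. The paper itself does not prove Lemma~\ref{lemma 1}; it restates the result and defers to \cite{mo15}, and your argument --- the quadratic-form identity $x'Bx=\tfrac12\sum_{i\neq j}W_{ij}(x_i+x_j)^2$ (via $W\mathbbm{1}=\mathbbm{1}$ and $W'=W$) for $\boldsymbol{0}\preceq B\preceq 2(1-\delta)I$, direct entrywise bounds for the diagonal matrix $D(t)$, and $I-W\succeq 0$ together with the splitting $H(t)=D(t)-B$ for the bounds on $H(t)$ --- is essentially the standard proof given there.
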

\begin{lemma}  \label{lemma:lemma 4.2} \re{Recall the definition of $\rho$ from Lemma \ref{proposition 2},} under Assumptions \ref{assm:BoundedHessian} and \ref{assm:Consensus}, starting from any initialization, the eigenvalues of the Hessian inverse approximation [cf.\ Eq.\ \eqref{eq:defHatH}] are bounded for all $t$ by
\[
\lambda I\preceq\hat{H}(t)^{-1}\preceq\Lambda I\,,
\]
where 
$
\varLambda=\frac{1+\rho}{2(1-\Delta)+\alpha m}, \lambda=\frac{1}{2(1-\delta)+\alpha M}.$
\end{lemma}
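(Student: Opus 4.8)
The plan is to bound the eigenvalues of $\hat H(t)^{-1} = D(t)^{-1/2}\big[I + D(t)^{-1/2}BD(t)^{-1/2}\big]D(t)^{-1/2}$ by combining the eigenvalue bounds on $D(t)$ from Lemma~\ref{lemma 1} with the spectral bound on the symmetrically-scaled matrix $D(t)^{-1/2}BD(t)^{-1/2}$ from Lemma~\ref{proposition 2}. The key observation is that all matrices involved are symmetric, so it suffices to sandwich the quadratic form $z'\hat H(t)^{-1}z$ for arbitrary $z\in\mathbb{R}^n$.

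First I would write $y = D(t)^{-1/2}z$ and note that
\[
z'\hat H(t)^{-1}z = y'\big[I + D(t)^{-1/2}BD(t)^{-1/2}\big]y.
\]
For the lower bound, since $D(t)^{-1/2}BD(t)^{-1/2}\succeq \boldsymbol 0$ by Lemma~\ref{proposition 2}, the bracketed matrix satisfies $I + D(t)^{-1/2}BD(t)^{-1/2}\succeq I$, so $z'\hat H(t)^{-1}z \ge y'y = z'D(t)^{-1}z \ge \mu_{min}(D(t)^{-1})\,\|z\|^2$. Using $D(t)\preceq (2(1-\delta)+\alpha M)I$ from Lemma~\ref{lemma 1} gives $\mu_{min}(D(t)^{-1}) = 1/\mu_{max}(D(t)) \ge 1/(2(1-\delta)+\alpha M)$, which is exactly $\lambda$. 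For the upper bound, $D(t)^{-1/2}BD(t)^{-1/2}\preceq \rho I$ gives $I + D(t)^{-1/2}BD(t)^{-1/2}\preceq (1+\rho)I$, hence $z'\hat H(t)^{-1}z \le (1+\rho)\,y'y = (1+\rho)\,z'D(t)^{-1}z \le (1+\rho)\mu_{max}(D(t)^{-1})\|z\|^2$. Using $D(t)\succeq (2(1-\Delta)+\alpha m)I$ gives $\mu_{max}(D(t)^{-1}) \le 1/(2(1-\Delta)+\alpha m)$, so the upper eigenvalue bound is $(1+\rho)/(2(1-\Delta)+\alpha m) = \Lambda$.

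I do not anticipate a serious obstacle here — the argument is essentially a two-line congruence/monotonicity computation once the right substitution $y = D(t)^{-1/2}z$ is made. The only point requiring a little care is making sure the inequalities on $D(t)^{-1}$ go the correct way (inverting a matrix inequality reverses it), and confirming that Lemma~\ref{lemma 1} and Lemma~\ref{proposition 2} both hold under Assumptions~\ref{assm:BoundedHessian}--\ref{assm:Consensus} so they may be invoked simultaneously, as stated. One should also note that $D(t)$ being diagonal and positive definite (already established in the text) guarantees $D(t)^{-1/2}$ exists and is symmetric, so the congruence transformation is legitimate and preserves the ordering of quadratic forms.
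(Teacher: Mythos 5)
Your proof is correct and is essentially the argument the paper relies on: the paper defers this lemma to \cite{mo15}, where the same congruence $z\mapsto D(t)^{-1/2}z$ combined with the eigenvalue bounds of Lemma \ref{proposition 2} and the $D(t)$ bounds of Lemma \ref{lemma 1} yields exactly the stated $\lambda$ and $\Lambda$. The only cosmetic point is that the bounds on $D(t)$ you use already follow from Assumptions \ref{assm:BoundedHessian} and \ref{assm:Consensus} alone, so citing Lemma \ref{lemma 1} does not actually require importing the Lipschitz hypothesis of Assumption \ref{assm:LipHessian}, consistent with the lemma's stated hypotheses.
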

The next lemma from \cite{po87,ra10} is used to establish almost sure convergence of the asynchronous network Newton algorithm.
\begin{lemma}  \label{lemma:lemma 4.5} Let $\big(\Omega,\mathcal{\,F},\,\mathcal{P}\big)$
be a probability space and $\mathcal{F}_{0}\subseteq\mathcal{F}_{1}\subseteq...$
be a sequence sub $\sigma$- fields of $\mathcal{F}$. Let $\left\{ X_{t}\right\} ,\left\{ Y_{t}\right\} ,\left\{ Z_{t}\right\} ,$
and $\left\{ W_{t}\right\}$ be $\mathcal{F}_{t}$ -measurable random
variables such that $\left\{X_{t}\right\}$ is bounded below and $\left\{ Y_{t}\right\} $
, $\left\{ Z_{t}\right\} $ , and $\left\{W_{t}\right\}$ are non-negative. Let $\sum_{t=0}^{\infty}Y_{t}<\infty$
and $\sum_{t=0}^{\infty}W_{t}<\infty$ , and
\begin{equation*}
\begin{aligned}
\mathbb{E}\big[X_{t+1}\textbf{\Big |}\mathcal{F}_{t}\big]\leq(1+Y_{t})X_{t}-Z_{t}+W_{t}\,,\label{eq:29}
\end{aligned}
\end{equation*}
hold with probability 1. Then \fm{with probability 1,} $\left\{ X_{t}\right\}$ converges and $\sum_{t=0}^{\infty}Z_{t}<\infty$.
\end{lemma}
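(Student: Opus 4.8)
The statement is a form of the Robbins--Siegmund almost-supermartingale convergence theorem. The plan is to massage the inequality $\mathbb{E}[X_{t+1}\mid\mathcal{F}_t]\leq(1+Y_t)X_t-Z_t+W_t$ into a genuine supermartingale, apply the supermartingale convergence theorem, and then read off both conclusions; a localization argument is needed because $\sum_t W_t$ is only almost surely finite, not bounded.

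First I would normalize. Since $\{X_t\}$ is bounded below, shifting it by a constant $c$ lets me assume $X_t\geq 0$ (the shift replaces $W_t$ by $W_t+cY_t$, still summable because $\sum_t Y_t<\infty$). Then, because $\sum_t Y_t<\infty$, the products $a_t:=\prod_{s=0}^{t-1}(1+Y_s)$ are nondecreasing and converge to a finite limit $a_\infty$; since $a_{t+1}=(1+Y_t)a_t$ is $\mathcal{F}_t$-measurable, dividing the inequality through by $a_{t+1}$ produces, for $\tilde X_t:=X_t/a_t$, $\tilde Z_t:=Z_t/a_{t+1}$ and $\tilde W_t:=W_t/a_{t+1}$, the cleaner relation $\mathbb{E}[\tilde X_{t+1}\mid\mathcal{F}_t]\leq\tilde X_t-\tilde Z_t+\tilde W_t$, where the tilde quantities are still nonnegative and $\sum_t\tilde W_t\leq\sum_t W_t<\infty$.

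Next I would form $P_t:=\tilde X_t+\sum_{s=0}^{t-1}(\tilde Z_s-\tilde W_s)$ and check, using that $\tilde Z_s$ and $\tilde W_s$ are $\mathcal{F}_t$-measurable for $s\leq t$, that $\mathbb{E}[P_{t+1}\mid\mathcal{F}_t]\leq P_t$, i.e.\ that $\{P_t\}$ is a supermartingale. The main obstacle is that $P_t$ is bounded below only by $-\sum_s\tilde W_s$, which is random rather than constant, so the convergence theorem does not apply verbatim. I would resolve this by localizing: for each $m\in\mathbb{N}$ set $\tau_m:=\inf\{t\geq 0:\sum_{s=0}^{t-1}\tilde W_s>m\}$, which is a stopping time; then $\{P_{t\wedge\tau_m}\}$ is a supermartingale bounded below by the constant $-m$ and hence converges almost surely, and since $\sum_s\tilde W_s<\infty$ almost surely we have $\tau_m\uparrow\infty$, so letting $m\to\infty$ shows that $P_t$ converges almost surely.

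Finally, convergence of $P_t$ together with convergence of $\sum_s\tilde W_s$ gives that $\tilde X_t+\sum_{s=0}^{t-1}\tilde Z_s$ converges; since $\sum_{s=0}^{t-1}\tilde Z_s$ is nondecreasing and $\tilde X_t\geq 0$, this forces $\sum_s\tilde Z_s<\infty$ and hence that $\tilde X_t$ itself converges. Undoing the normalization via $X_t=a_t\tilde X_t$ and $Z_t\leq a_\infty\tilde Z_t$ then yields that $\{X_t\}$ converges and $\sum_t Z_t<\infty$, as claimed. Apart from the localization step, the argument is routine bookkeeping with the convergent product $a_t$ and with the measurability of each term; alternatively, one may simply invoke the result as stated in \cite{po87,ra10}.
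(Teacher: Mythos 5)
Your argument is correct and is the classical Robbins--Siegmund almost-supermartingale proof; the comparison with the paper is short, because the paper does not prove this lemma at all --- it imports it as a known result from \cite{po87,ra10} and uses it as a black box in Theorem \ref{thm:conve}. So your write-up supplies a self-contained derivation where the paper supplies only a citation, and your closing remark (``one may simply invoke the result as stated'') is exactly what the paper does. The substance of your proof is sound: normalizing by the convergent product $a_t=\prod_{s=0}^{t-1}(1+Y_s)$ removes the multiplicative drift, the process $P_t=\tilde X_t+\sum_{s=0}^{t-1}(\tilde Z_s-\tilde W_s)$ is indeed a supermartingale by the stated measurability, localization handles the fact that the lower bound $-\sum_s\tilde W_s$ is random, and the monotonicity of $\sum_{s}\tilde Z_s$ combined with $\tilde X_t\ge 0$ correctly separates the two conclusions. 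One small indexing point deserves care if you write this out in full: with your definition $\tau_m=\inf\{t\ge 0:\sum_{s=0}^{t-1}\tilde W_s>m\}$, the stopped variable $P_{t\wedge\tau_m}$ at $t\ge\tau_m$ is bounded below only by $-\sum_{s=0}^{\tau_m-1}\tilde W_s$, which can exceed $m$ in magnitude (by the last increment $\tilde W_{\tau_m-1}$) and is therefore still random; defining instead $\tau_m=\inf\{t\ge 0:\sum_{s=0}^{t}\tilde W_s>m\}$ gives the uniform bound $P_{t\wedge\tau_m}\ge -m$ needed to apply the supermartingale convergence theorem verbatim. This is an off-by-one repair, not a conceptual gap, and the rest of the argument (including the observation that the shift by the lower bound of $X_t$ only perturbs the additive term by a multiple of $Y_t$, which remains summable) goes through as you describe.
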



The last two lemmas are adopted from \cite{boydbook}, and are used as key relations in the convergence rate analysis. 

\begin{lemma} \label{lemma:lemma 4.8} If $F:\mathbb{R}^n\to\mathbb{R}$ is a twice continuously differentiable function with
$\kappa$-Lipschitz continuous Hessian, then for any $u, v$ in $\mathbb{R}^n$, we have
\begin{equation*}
\begin{aligned}
\norm{ \nabla F(v)-\nabla F(u)-\nabla^{2}F(u)(v-u)} \leq\frac{\kappa}{2}\norm{v-u}^2.
\end{aligned}
\end{equation*}
\end{lemma}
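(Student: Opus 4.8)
The plan is to derive the bound from the fundamental theorem of calculus applied along the line segment joining $u$ and $v$, and then to control the resulting integral using the $\kappa$-Lipschitz continuity of $\nabla^2 F$. First I would fix $u,v\in\mathbb{R}^n$, set $\gamma(\tau)=u+\tau(v-u)$ for $\tau\in[0,1]$, and consider the vector-valued map $\phi(\tau)=\nabla F(\gamma(\tau))$. Since $F$ is twice continuously differentiable, $\phi$ is continuously differentiable with $\phi'(\tau)=\nabla^2 F(\gamma(\tau))(v-u)$, so the fundamental theorem of calculus gives
\[ \nabla F(v)-\nabla F(u)=\phi(1)-\phi(0)=\int_0^1 \nabla^2 F(\gamma(\tau))(v-u)\,d\tau. \]

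Next I would subtract the constant term, writing $\nabla^2 F(u)(v-u)=\int_0^1 \nabla^2 F(u)(v-u)\,d\tau$, to obtain
\[ \nabla F(v)-\nabla F(u)-\nabla^2 F(u)(v-u)=\int_0^1\big(\nabla^2 F(\gamma(\tau))-\nabla^2 F(u)\big)(v-u)\,d\tau. \]
Then I would take norms on both sides, pull the norm inside the integral (the triangle-inequality property for integrals of vector-valued functions), use submultiplicativity $\norm{(\nabla^2 F(\gamma(\tau))-\nabla^2 F(u))(v-u)}\le\norm{\nabla^2 F(\gamma(\tau))-\nabla^2 F(u)}\,\norm{v-u}$, and finally apply Assumption-style $\kappa$-Lipschitz continuity of $\nabla^2 F$ together with $\norm{\gamma(\tau)-u}=\tau\norm{v-u}$. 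This bounds the integrand by $\kappa\tau\norm{v-u}^2$, and $\int_0^1\tau\,d\tau=\tfrac12$ yields exactly $\frac{\kappa}{2}\norm{v-u}^2$.

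There is essentially no substantive obstacle here; the lemma is a classical fact and the proof is routine. The only points that merit a line of care are the justification that the norm may be moved through the integral and the use of the induced-matrix-norm submultiplicativity (both standard properties of the Euclidean and induced norms), together with the observation that twice continuous differentiability of $F$ is exactly what is needed to differentiate $\phi$ and invoke the fundamental theorem of calculus.
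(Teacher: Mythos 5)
Your proof is correct and is the standard argument: the paper itself does not prove this lemma but simply cites it from \cite{boydbook}, and your derivation via the fundamental theorem of calculus along the segment, followed by the triangle inequality for integrals, submultiplicativity, and the Lipschitz bound $\norm{\nabla^2 F(\gamma(\tau))-\nabla^2 F(u)}\leq \kappa\tau\norm{v-u}$, is exactly the textbook proof the citation points to. No gaps.
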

\begin{lemma} \label{functionbound}
If $F:\mathbb{R}^n\to\mathbb{R}$ is a strongly convex function with $mI\preceq\nabla^2F(y)\preceq MI$ for all $y\in\mathbb{R}^n$, then for any $u, v$ in $\mathbb{R}^n$, we have
\[F(u)\geq F(v)-\frac{1}{2m}\norm{\nabla F(v)}^2\] and\[F^*\leq F(v)-\frac{1}{2M}\norm{\nabla F(v)}^2.\]
\end{lemma}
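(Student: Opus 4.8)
The plan is to prove the two inequalities by reducing to a one-dimensional analysis along the line segment between $u$ and $v$ and using the given eigenvalue bounds on the Hessian, following the standard convexity/smoothness argument. First I would fix $u,v$ and write $F(u) = F(v) + \nabla F(v)'(u-v) + \int_0^1 (1-s)\,(u-v)'\nabla^2 F\big(v + s(u-v)\big)(u-v)\,ds$ via the second-order Taylor expansion with integral remainder. Using $\nabla^2 F(y)\succeq mI$ for all $y$, the remainder integral is bounded below by $\frac{m}{2}\norm{u-v}^2$, which gives the quadratic lower bound $F(u)\geq F(v) + \nabla F(v)'(u-v) + \frac{m}{2}\norm{u-v}^2$.

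Next I would minimize the right-hand side of this quadratic lower bound over $u$ (equivalently over the displacement $w=u-v\in\mathbb{R}^n$): the function $w\mapsto \nabla F(v)'w + \frac{m}{2}\norm{w}^2$ is minimized at $w^* = -\frac{1}{m}\nabla F(v)$, with minimum value $-\frac{1}{2m}\norm{\nabla F(v)}^2$. Since the bound $F(u)\geq F(v) + \nabla F(v)'(u-v) + \frac{m}{2}\norm{u-v}^2$ holds for every $u$, in particular it holds for $u$ chosen to realize this minimum, and for that $u$ the right-hand side equals $F(v) - \frac{1}{2m}\norm{\nabla F(v)}^2$. But the left-hand side for a specific $u$ is at least the infimum of $F$ only trivially; the cleaner route is: for \emph{all} $u$, $F(u) \geq F(v) - \frac{1}{2m}\norm{\nabla F(v)}^2$ because the right-hand side is the minimum over $u$ of a quantity that lower-bounds $F(u)$ pointwise. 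This establishes the first inequality. For the second inequality, I would instead use the upper bound $\nabla^2 F(y)\preceq MI$, yielding $F(u)\leq F(v) + \nabla F(v)'(u-v) + \frac{M}{2}\norm{u-v}^2$ for all $u$; taking the infimum of both sides over $u$ gives $F^* \leq F(v) - \frac{1}{2M}\norm{\nabla F(v)}^2$, since the right-hand side minimizes at $u = v - \frac{1}{M}\nabla F(v)$.

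I do not expect a serious obstacle here, as this is a textbook fact (indeed the lemma is cited from \cite{boydbook}); the only point requiring mild care is the logical direction in the first inequality — one is asserting $F(u)\geq F(v)-\frac{1}{2m}\norm{\nabla F(v)}^2$ for \emph{every} $u$, which follows because $F(u)$ dominates a quadratic in $u-v$ whose global minimum is exactly $F(v)-\frac{1}{2m}\norm{\nabla F(v)}^2$. Strong convexity (the lower Hessian bound $m>0$) is what makes that quadratic have a finite minimizer, and it is the only structural ingredient needed beyond the Taylor expansion.
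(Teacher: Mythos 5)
Your proposal is correct: the Taylor expansion with integral remainder plus the Hessian bounds gives the two quadratic bounds, and minimizing the quadratic in $u-v$ (which is legitimate since the bound holds pointwise for every $u$) yields exactly the two stated inequalities. The paper itself provides no proof of this lemma --- it is stated as a standard fact imported from the cited convex optimization textbook --- and your argument is precisely the standard derivation that citation refers to, so there is nothing to compare beyond noting that your reasoning is sound.
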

\subsection{Convergence of Asynchronous Network Newton Algorithm}\label{sec:convAsync}
\sf{In this section, in Theorem \ref{thm:conve}, we show that the sequence of function values $\left\{ F(x(t))\right\}$ generated by the asynchronous network Newton algorithm converges to $F^*$ almost surely. We also show that the function values $F(x(t))$ and the iterates $x(t)$ converge to $F^*$ and $x^*$ with a global linear rate in expectation in Theorem \ref{linconv}.  In what follows, we introduce some notation used to connect asynchronous and synchronous algorithms. To model the asynchrony, we define a stochastic diagonal {\it activation matrix} $\Phi(t)$ in $\mathbb{R}^{n\times n}$ by}
\begin{equation}
\begin{aligned}
\Phi(t)_{ii}=\begin{cases}
1 & \mbox{if i is active at time t,}\\
0 & \mbox{otherwise.}
\end{cases}\label{eq:15}
\end{aligned}
\end{equation}
This matrix indicates which agent is active at time $t$. We denote  by $\Phi^i$ a diagonal matrix with its $i^{th}$ element equal to $1$ and the rest equal to zero. This matrix is a realization of the activation matrix $\Phi(t)$. We also use $\mathcal{F}_{t}$ to denote the $\sigma$-field capturing all realizations (activations) of the algorithm up to and including time $t$. \sm{We can now define the asynchronous Newton direction generated by Algorithm \ref{async NN} at iteration $t$, i.e., $d^a(t-1)$ as follows}
\begin{equation}
\begin{aligned}
d_{i}^a(t-1)=\begin{cases}
-\big[\hat{H}(t-1)^{-1}g(t-1)\big]_{i} & \mbox{if i is active at time t,}\\
0 &\mbox{otherwise.}
\end{cases}
\end{aligned}\label{eq:dt-1}
\end{equation}
The asynchronous network Newton update formula can be aggregated as 
\[x(t)=x(t-1)+\varepsilon P^{-1} d^a(t-1)\,.\]
Conditioned on $\mathcal{F}_{t-1}$, we have that $d^a(t-1)$ is a random vector given by 
\[d^a(t-1)=-\Phi(t)\hat{H}(t-1)^{-1}g(t-1),\] where the random matrix $\Phi(t)$ chooses one element of $\hat{H}(t-1)^{-1}g(t-1)$ to keep in $d^a(t-1)$ and makes the rest 0 as in Eq.\ \eqref{eq:dt-1}. Thus, the asynchronous Newton update can be written as
\be\label{eq:xUpdateAsync}x(t)=x(t-1)-\varepsilon P^{-1}\Phi(t)\hat{H}(t-1)^{-1}g(t-1).\ee
We note that at iteration $t$, each agent $i$ is active with  probability $p_i$, thus we have that \be\label{eq:expPhi}\mathbb{E}[\Phi(t)|\mathcal{F}_{t-1}] = \sum_{i=1}^n p_i \Phi^i=P,\ee where the expectation is with respect to the realization of the algorithm. 
\begin{theorem} \label{thm:conve} Consider the iterates $\{x(t)\}$ generated by the asynchronous network Newton algorithm as in Algorithm \ref{async NN}, and recall the definition of $\lambda$ and $\Lambda$ from Lemma \ref{lemma:lemma 4.2} and the notations $g(t)=\nabla F(x(t))$ and $H(t)=\nabla^2 F(x(t))$, then if the stepsize parameter $\varepsilon$
is chosen as 
\begin{equation}
\begin{aligned}
0<\varepsilon\leq 2\pi\big(\frac{\lambda}{\Lambda}\big)^{2}\,,\label{eq:30}
\end{aligned}
\end{equation}
then
\begin{equation}
\begin{aligned}
\mathbb{E}\big[F(x(t))\textbf{\Big |}\mathcal{F}_{t-1}\big]\leq F(x(t-1))-\big(\varepsilon\lambda-\frac{\varepsilon^{2}\Lambda^{2}}{2\lambda\pi}\big)\norm {g(t-1)}^{2}.\label{eq:martingale}
\end{aligned}
\end{equation}
and the sequence $\left\{ F(x(t))\right\} $ converges to the optimal value of problem \ref{optFormulation}, $F^*$, almost surely.
\end{theorem}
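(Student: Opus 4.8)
### Proof Proposal

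The plan is to establish the descent inequality \eqref{eq:martingale} first, and then invoke the supermartingale convergence result of Lemma \ref{lemma:lemma 4.5} to conclude almost sure convergence. For the descent step, I would start from the second-order Taylor expansion of $F$ with the Lipschitz-Hessian remainder bound from Lemma \ref{lemma:lemma 4.8} (applied with $\kappa = \alpha L$, via Lemma \ref{lemma 01}), evaluated along the update direction $x(t) = x(t-1) - \varepsilon P^{-1}\Phi(t)\hat H(t-1)^{-1}g(t-1)$. This gives
\[
F(x(t)) \le F(x(t-1)) - \varepsilon\, g(t-1)' P^{-1}\Phi(t)\hat H(t-1)^{-1} g(t-1) + \frac{M_H}{2}\norm{\varepsilon P^{-1}\Phi(t)\hat H(t-1)^{-1}g(t-1)}^2,
\]
where $M_H$ is an upper bound on the Hessian of $F$ (from Lemma \ref{lemma 1}, $M_H = 2(1-\delta)+\alpha M$); alternatively one can bound the quadratic term directly by $\mu_{\max}(H)$. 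The key move is then to take the conditional expectation $\mathbb{E}[\cdot\mid\mathcal F_{t-1}]$ and use $\mathbb{E}[\Phi(t)\mid\mathcal F_{t-1}] = P$ from \eqref{eq:expPhi}, so the linear term becomes $-\varepsilon\, g(t-1)'\hat H(t-1)^{-1} g(t-1) \le -\varepsilon\lambda\norm{g(t-1)}^2$ using the lower eigenvalue bound $\lambda I \preceq \hat H(t-1)^{-1}$ from Lemma \ref{lemma:lemma 4.2}.

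For the quadratic term, the subtlety is that $\Phi(t)$ is idempotent and diagonal, so $\mathbb{E}[\Phi(t) A' A \Phi(t)\mid\mathcal F_{t-1}]$ is a diagonal-scaling of $A'A$; more simply, $\norm{P^{-1}\Phi(t) v}^2 = \sum_i \Phi(t)_{ii} v_i^2/p_i^2$, so its conditional expectation is $\sum_i v_i^2/p_i \le \frac{1}{\pi}\norm{v}^2$ with $v = \hat H(t-1)^{-1}g(t-1)$. Combining with $\norm{\hat H(t-1)^{-1}g(t-1)}^2 \le \Lambda^2 \norm{g(t-1)}^2$ from the upper eigenvalue bound in Lemma \ref{lemma:lemma 4.2}, the quadratic term contributes at most $\frac{\varepsilon^2 \Lambda^2}{2\pi}\cdot(\text{Hessian bound})\cdot\norm{g(t-1)}^2$. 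Matching this against the stated coefficient $\frac{\varepsilon^2\Lambda^2}{2\lambda\pi}$ requires identifying the Hessian upper bound with $1/\lambda = 2(1-\delta)+\alpha M$, which is exactly the bound in Lemma \ref{lemma 1} — so this is consistent. The stepsize condition $\varepsilon \le 2\pi(\lambda/\Lambda)^2$ is precisely what makes $\varepsilon\lambda - \frac{\varepsilon^2\Lambda^2}{2\lambda\pi} \ge 0$, i.e. the coefficient of $-\norm{g(t-1)}^2$ nonnegative; I would verify $\varepsilon\lambda - \frac{\varepsilon^2\Lambda^2}{2\lambda\pi} \ge 0 \iff \varepsilon \le 2\pi\lambda^2/\Lambda^2$ by direct rearrangement.

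To finish, apply Lemma \ref{lemma:lemma 4.5} with $X_t = F(x(t)) - F^*$ (bounded below by $0$), $Y_t = W_t = 0$, and $Z_t = (\varepsilon\lambda - \frac{\varepsilon^2\Lambda^2}{2\lambda\pi})\norm{g(t-1)}^2 \ge 0$. The lemma yields that $F(x(t))$ converges almost surely and $\sum_t Z_t < \infty$, hence $\norm{g(t)} \to 0$ a.s. (when the stepsize inequality is strict; the boundary case $\varepsilon = 2\pi(\lambda/\Lambda)^2$ would need separate handling or can be excluded by taking strict inequality). Since $F$ is strongly convex, $\norm{g(t)}\to 0$ forces $x(t) \to x^*$ and therefore $F(x(t)) \to F^*$ almost surely, which is the claim. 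I expect the main obstacle to be the careful bookkeeping in the quadratic term: handling the $P^{-1}\Phi(t)$ factor correctly under expectation (using that $\Phi(t)^2 = \Phi(t)$ and $\mathbb{E}[\Phi(t)_{ii}] = p_i$) rather than naively writing $\mathbb{E}[\norm{P^{-1}\Phi(t)v}^2] = \norm{P^{-1}v}^2$, and making sure the constant lands exactly on $\frac{1}{\lambda}$ so that the advertised coefficient and stepsize bound come out as stated.
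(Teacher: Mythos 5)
Your proposal is correct and follows essentially the same route as the paper: a descent inequality from the second-order Taylor bound with the Hessian upper bound $1/\lambda = 2(1-\delta)+\alpha M$, the conditional-expectation computation $\mathbb{E}\big[\norm{P^{-1}\Phi(t)v}^2\big|\mathcal{F}_{t-1}\big]=\sum_i v_i^2/p_i\leq\frac{1}{\pi}\norm{v}^2$, the eigenvalue bounds $\lambda I\preceq\hat H^{-1}\preceq\Lambda I$, and then Lemma \ref{lemma:lemma 4.5} with $Y_t=W_t=0$. Your side remark about the boundary case $\varepsilon=2\pi(\lambda/\Lambda)^2$ is a legitimate observation (there the coefficient of $\norm{g(t-1)}^2$ vanishes and the argument that $\norm{g(t)}\to 0$ degenerates, a point the paper glosses over), and your initial mention of Lemma \ref{lemma:lemma 4.8} is superfluous since, as you note, only the Hessian eigenvalue bound of Lemma \ref{lemma 1} is actually used.
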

\begin{proof} Using the Taylor's theorem, we have that for any $a$, $b$ in $\mathbb{R}^n$, there exists a point $c$ on the line segment between them such that
\[
F(a)= F(b)+g(b)'(a-b)+\frac{1}{2}(a-b)' H(c)(a-b).
\] By using the bound on Hessian matrix in Lemma \ref{lemma 1} we have
\[
F(a)\leq F(b)+g(b)'(a-b)+\frac{2(1-\delta)+\alpha M}{2}(a-b)'(a-b).
\]
Thus, for any realization of the activation matrix, $\Phi(t)$, we can substitute $a=x(t)$, $b=x(t-1)$ and $\lambda=\frac{1}{2(1-\delta)+\alpha M}$ from Lemma \ref{lemma:lemma 4.2}, and have
\begin{equation}
\begin{aligned}
F(x(t))\leq F(x(t-1))+g(t-1)^\prime(x(t)-x(t-1))+\frac{1}{2\lambda}\norm{x(t)-x(t-1)}^{2}\,,\label{eq:31}
\end{aligned}
\end{equation}
From Eq. (\ref{eq:xUpdateAsync}), we have
\begin{equation}
x(t)-x(t-1)=-\varepsilon P^{-1}\Phi(t)\hat{H}(t-1)^{-1}g(t-1)\,.\label{phiterm}
\end{equation}
Taking expectation \sf{on both sides} of (\ref{eq:31}) conditioned on  $\mathcal{F}_{t-1}$ and using (\ref{phiterm}) we get
\begin{equation*}
\begin{aligned}
&\mathbb{E}\big[F(x(t))\textbf{\Big |}\mathcal{F}_{t-1}\big]\leq F(x(t-1))-\varepsilon g(t-1)^\prime P^{-1}\mathbb{E}\big[\Phi(t)\textbf{\Big |}\mathcal{F}_{t-1}\big]\hat{H}(t-1)^{-1}g(t-1)+\\&\frac{\varepsilon^{2}}{2\lambda}\mathbb{E}\Big[\norm{  P^{-1}\Phi(t)\hat{H}(t-1)^{-1}g(t-1)} ^{2}\textbf{\Big |}\mathcal{F}_{t-1}\Big], \label{eq:32}
\end{aligned}
\end{equation*}
where we used the property that conditioned on $\mathcal{F}_{t-1}$, $x(t-1)$, $\hat{H}(t-1)$, and $g(t-1)$ are deterministic.  We note that each agent $i$ is active with probability $p_i$ at iteration $t$; therefore, 
\begin{align*} &\mathbb{E}\Big[\norm{ P^{-1} \Phi(t)\hat{H}(t-1)^{-1}g(t-1)} ^{2}\textbf{\Big |}\mathcal{F}_{t-1}\Big]=\sum_{i=1}^{n}p_i\Big(\frac{1}{p_i} \big[\hat{H}(t-1)^{-1}g(t-1)\big]_{i}\Big)^{2}=\\&\sum_{i=1}^{n}\frac{1}{p_i}\big[\hat{H}(t-1)^{-1}g(t-1)\big]_{i}^{2}\leq\frac{1}{\pi}\sum_{i=1}^{n}\big[\hat{H}(t-1)^{-1}g(t-1)\big]_{i}^{2}=\frac{1}{\pi}\norm {\hat{H}(t-1)^{-1}g(t-1)}^2.
\end{align*}
By using the previous two relations and Eq. (\ref{eq:expPhi}), we have 
\begin{equation*}
\begin{aligned}
\mathbb{E}&\big[F(x(t))\textbf{\Big |}\mathcal{F}_{t-1}\big]\leq F(x(t-1))-\varepsilon g(t-1)^\prime P^{-1} P\hat{H}(t-1)^{-1}g(t-1)\\&+\frac{\varepsilon^{2}}{2\lambda\pi} \norm{ \hat{H}(t-1)^{-1}g(t-1)}^2\,.\label{eq:32}
\end{aligned}
\end{equation*}By using the bounds on the approximated Hessian [c.f. Lemma \ref{lemma:lemma 4.2}], we have
\[-\varepsilon g(t-1)^\prime \hat{H}(t-1)^{-1}g(t-1)\leq -\varepsilon \lambda\norm{g(t-1)}^2,\]and
\begin{align*}\norm {\hat{H}(t-1)^{-1}g(t-1)}^{2}\leq\Lambda^2 \norm{g(t-1)}^2.\end{align*}
Combining the three relations above yields
\begin{equation}
\begin{aligned}
\mathbb{E}\big[F(x(t))\textbf{\Big |}\mathcal{F}_{t-1}\big]\leq F(x(t-1))-\big(\varepsilon\lambda-\frac{\varepsilon^{2}\Lambda^{2}}{2\lambda\pi}\big)\norm {g(t-1)}^{2}. \label{martingale1}
\end{aligned}
\end{equation}

We next argue that the scalar $\varepsilon\lambda-\frac{\varepsilon^{2}\Lambda^{2}}{2\lambda\pi}\geq 0$. We start by rewriting it as 
\[\varepsilon\lambda-\frac{\varepsilon^{2}\Lambda^{2}}{2\lambda\pi} = \frac{2\varepsilon\lambda^2\pi-\varepsilon^2\Lambda^2}{2\lambda\pi} = \frac{\varepsilon(2\lambda^2\pi-\varepsilon\Lambda^2)}{2\lambda\pi}.\] Since the stepsize parameter $\varepsilon$ satisfies the bounds in (\ref{eq:30}), i.e.,
\[\varepsilon\leq 2\pi\big(\frac{\lambda}{\Lambda}\big)^{2},
\]
the scalar $\varepsilon\lambda-\frac{\varepsilon^{2}\Lambda^{2}}{2\lambda\pi}$ is nonnegative. In addition, we have that $F(x(t))$ is strongly convex, thus bounded below by its second order approximation \cite{boydbook}. Therefore, we can use Eq. (\ref{martingale1}) together with the result of Lemma \ref{lemma:lemma 4.5}, with $Y_t = 0$, $W_t=0$, to conclude that the sequence $\left\{ F(x(t))\right\} $
converges almost surely and $\sum_{t=0}^{\infty}\big(\varepsilon\lambda-\frac{\varepsilon^{2}\Lambda^{2}}{2\lambda\pi}\big)\norm {g(t-1)} ^{2}<\infty$, 
\fm{with probability 1}, which means that $\norm {g(t)} ^{2}$ converges to
zero \ew{almost surely}. By combining these two results, we complete the proof.
\end{proof}
\re{\begin{remark} In our algorithm, the stepsize $\varepsilon$ is common among all agents. Computing $\varepsilon$ requires global variables across the network, i.e., $m$, $M$, $\delta$, and $\Delta$, which can be obtained either by applying a consensus algorithm prior to the main algorithm or by estimating global bounds on the properties of the objective function \cite{jadbabaie2009distributed,mo15,shi2015extra,wu2018decentralized}.
\end{remark}}
\begin{theorem} \label{linconv}
\sf{Consider the iterate $\left\{ x(t)\right\} $ generated by the asynchronous network Newton algorithm as in Algorithm \ref{async NN}. If the stepsize parameter
$\varepsilon$ satisfies }
\begin{equation}
\begin{aligned}
0<\varepsilon<\min\,\left\{ \frac{1}{2},\,2\pi\big(\frac{\lambda}{\Lambda}\big)^{2}\right\} \,,\label{eq:step}
\end{aligned}
\end{equation}
\sm{then the sequences $\left\{ F(x(t))\right\}$ and $\left\{ x(t)\right\}$ converge linearly in expectation to their optimal values, i.e.,
\begin{equation*}
\begin{aligned}
\mathbb{E}\big[F(x(t))-F^*\big]\leq\big(1-\beta\big)^{t}\big[F(x(0))-F^*\big]\,,
\end{aligned}
\end{equation*}
and
\begin{equation*}
\begin{aligned}
\mathbb{E}\Big[\norm{x(t)-x^*}\Big]\leq\Bigg(\frac{2\big(F(x(0))-F^*\big)}{\alpha m}\Bigg)^{1/2}\Big((1-\beta)^{1/2}\Big)^t.
\end{aligned}
\end{equation*}}
\noindent where $\beta=\frac{\alpha m\varepsilon(2\pi\lambda^{2}-\varepsilon\Lambda^{2})}{\lambda\pi}$.\\
\end{theorem}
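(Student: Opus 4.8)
The plan is to turn the one-step expected decrease \eqref{eq:martingale} established in Theorem \ref{thm:conve} into a genuine contraction for $F(x(t))-F^*$ by invoking strong convexity, and then to unroll the resulting recursion. First I would record that, by Lemma \ref{lemma 1}, the global objective $F$ satisfies $\alpha m I\preceq H(t)\preceq(2(1-\delta)+\alpha M)I$, so $F$ is $\alpha m$-strongly convex. Applying the first inequality of Lemma \ref{functionbound} to $F$ with $v=x(t-1)$ and $u=x^*$, and using $\nabla F(x^*)=0$, gives the Polyak--{\L}ojasiewicz-type bound
\[\norm{g(t-1)}^{2}\ \ge\ 2\alpha m\bigl(F(x(t-1))-F^*\bigr).\]
Substituting this into \eqref{eq:martingale} and subtracting $F^*$ from both sides yields
\[\mathbb{E}\bigl[F(x(t))-F^*\,\big|\,\mathcal{F}_{t-1}\bigr]\ \le\ \Bigl(1-2\alpha m\bigl(\varepsilon\lambda-\tfrac{\varepsilon^{2}\Lambda^{2}}{2\lambda\pi}\bigr)\Bigr)\bigl(F(x(t-1))-F^*\bigr)\ =\ (1-\beta)\bigl(F(x(t-1))-F^*\bigr),\]
after noting that $2\alpha m\bigl(\varepsilon\lambda-\tfrac{\varepsilon^{2}\Lambda^{2}}{2\lambda\pi}\bigr)=\tfrac{\alpha m\varepsilon(2\pi\lambda^{2}-\varepsilon\Lambda^{2})}{\lambda\pi}=\beta$.

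Next I would check that $0<\beta<1$ under the stepsize condition \eqref{eq:step}; this is the only genuinely new estimate in the proof and the step where the extra bound $\varepsilon<\tfrac12$ is needed. Positivity of $\beta$ is immediate: $\varepsilon<2\pi(\lambda/\Lambda)^{2}$ forces $2\pi\lambda^{2}-\varepsilon\Lambda^{2}>0$, and every remaining factor in $\beta$ is positive. For the upper bound, write $\beta=2\alpha m\varepsilon\lambda-\tfrac{\alpha m\varepsilon^{2}\Lambda^{2}}{\lambda\pi}<2\alpha m\varepsilon\lambda$ by dropping the nonnegative second term; since $\lambda=\tfrac{1}{2(1-\delta)+\alpha M}<\tfrac{1}{\alpha M}\le\tfrac{1}{\alpha m}$ (using $\delta<1$ and $m\le M$), we get $\alpha m\lambda<1$, hence $\beta<2\varepsilon<1$ because $\varepsilon<\tfrac12$. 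Thus $1-\beta\in(0,1)$ and the recursion above is a strict contraction.

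Finally, taking total expectations of the contraction and applying the tower property iteratively from $t$ down to $0$ gives $\mathbb{E}[F(x(t))-F^*]\le(1-\beta)^{t}\bigl(F(x(0))-F^*\bigr)$, the first claimed bound, with no outer expectation on the right since $x(0)$ is deterministic. For the iterates, $\alpha m$-strong convexity together with $\nabla F(x^*)=0$ yields $F(x(t))-F^*\ge\tfrac{\alpha m}{2}\norm{x(t)-x^*}^{2}$, so $\mathbb{E}\bigl[\norm{x(t)-x^*}^{2}\bigr]\le\tfrac{2}{\alpha m}\mathbb{E}[F(x(t))-F^*]\le\tfrac{2(F(x(0))-F^*)}{\alpha m}(1-\beta)^{t}$; an application of Jensen's inequality $\mathbb{E}[\norm{x(t)-x^*}]\le\bigl(\mathbb{E}[\norm{x(t)-x^*}^{2}]\bigr)^{1/2}$ then produces the stated rate with factor $(1-\beta)^{1/2}$. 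I do not expect a serious obstacle here: the argument is a standard strongly-convex linear-rate bootstrap off Theorem \ref{thm:conve}, and the one point requiring care is precisely the verification that $\beta\in(0,1)$, which is what pins down the role of the additional constraint $\varepsilon<\tfrac12$.
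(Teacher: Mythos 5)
Your proposal is correct and follows essentially the same route as the paper's proof: the Polyak--{\L}ojasiewicz bound from Lemma \ref{functionbound} applied to the descent inequality \eqref{eq:martingale}, the tower-property unrolling, the strong-convexity plus Jensen step for the iterates, and the same two-part verification that $\beta\in(0,1)$ (your bound $\alpha m\lambda<1$ via $\lambda<1/(\alpha M)\le 1/(\alpha m)$ is the paper's $\alpha m<2(1-\delta)+\alpha M$ in disguise). No gaps.
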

\begin{proof}
We use the result of Theorem \ref{thm:conve} to prove the global linear rate of convergence. We note that our objective function, $F(x)$, is $\alpha m$-strongly convex, then by using the result of Lemma \ref{functionbound}, we have
\begin{equation}
-\norm {g(t-1)} ^{2}\leq-2\alpha m\big(F(x(t-1))-F^*\big).\label{eq:bound}
\end{equation}
By subtracting $F^*$ \sf{on both sides} of (\ref{eq:martingale}) and substituting the bound in Eq. (\ref{eq:bound}), we have
\begin{equation}
\begin{aligned}
\mathbb{E}\big[F(x(t))-F^*\textbf{\Big |}\mathcal{F}_{t-1}\big]\leq\big(1-\beta\big)\big(F(x(t-1))-F^*\big)\,,\label{eq:linconv1}
\end{aligned}
\end{equation}
\noindent with $\beta=\frac{\alpha m\varepsilon(2\pi\lambda^{2}-\varepsilon\Lambda^{2})}{\lambda\pi}$.\\
We next take expectation \sf{on both sides} of (\ref{eq:linconv1}) with respect to $\mathcal{F}_{t-2} \supseteq ...\supseteq \mathcal{F}_{0}$ recursively. Using the tower rule of expectations we have

\begin{equation}
\begin{aligned}
&\mathbb{E}\big[F(x(t))-F^*\textbf{\Big |}\mathcal{F}_{0}\big]=\mathbb{E}\big[F(x(t))-F^*\big]\leq\big(1-\beta\big)^{t}\big(F(x(0))-F^*\big)\,.\label{eq:lin}
\end{aligned}
\end{equation}
\sm{We now analyze the  sequence of $\left\{ x(t)\right\}$. By using the Taylor's theorem and the strong convexity of the objective function $F(\cdot)$, we have 
\[F(x(t))\geq F^*+\re{g(x^*)^\prime}(x(t)-x^*)+\frac{\alpha m}{2}\norm{x(t)-x^*}^2,\] where $\alpha m$ is the lower bound on the eigenvalues of $H(t)$ [c.f. Lemma \ref{lemma 1}].  We note that $g(x^*)=0$, therefore \[\norm{ x(t)-x^*}^2\leq\frac{2}{\alpha m}\big(F(x(t))-F^*\big).\] 
We next take expectations on both sides of the previous inequality and use Eq. (\ref{eq:lin}) to obtain
\[\mathbb{E}\Big[\norm{x(t)-x^*}^2\Big]\leq\frac{2}{\alpha m}\mathbb{E}\Big[F(x(t))-F^*\Big]\leq\frac{2\big(F(x(0))-F^*\big)}{\alpha m}(1-\beta)^t.\] Employing the Jensen's inequality for expectations yields \[\Bigg(\mathbb{E}\Big[\norm{x(t)-x^*}\Big]\Bigg)^2\leq\mathbb{E}\Big[\norm{\big(x(t)-x^*\big)}^2\Big]\leq\frac{2\big(F(x(0))-F^*\big)}{\alpha m}(1-\beta)^t.\] By taking square root on both sides of the previous relation, we obtain
\begin{equation}
\begin{aligned}
\mathbb{E}\Big[\norm{x(t)-x^*}\Big]\leq\Bigg(\frac{2\big(F(x(0))-F^*\big)}{\alpha m}\Bigg)^{1/2}\Big((1-\beta)^{1/2}\Big)^t. \label{eq:linX}
\end{aligned}
\end{equation}}
We note that Eq. (\ref{eq:lin}) and Eq. (\ref{eq:linX}) imply the global linear convergence in expectation only if $0<\beta<1$. We next argue that $0<\beta<1$. We note that if the stepsize parameter $\varepsilon$ satisfies the condition in Eq. (\ref{eq:step}), we have
\[2\pi\lambda^{2}-\varepsilon\Lambda^{2}>0,\] 
thus, $\beta>0$. We now show that $\beta<1$. We first rewrite $\beta$ as
\[\beta=\frac{2\alpha m\pi\varepsilon\lambda^{2}}{\lambda\pi}-\frac{\alpha m\varepsilon^2\Lambda^{2}}{\lambda\pi}.\] We note that $\frac{\alpha m\varepsilon^{2}\Lambda^{2}}{\lambda\pi}>0$ and $\lambda=\frac{1}{2(1-\delta)+\alpha M}$ [c.f. Lemma \ref{lemma:lemma 4.2}]. Therefore, \[\beta<\frac{2\alpha m\varepsilon\pi}{\big(2(1-\delta)+\alpha M\big)\pi}.\] 
Because $1-\delta>0$, we have $\alpha m<\alpha M+2(1-\delta)$, using this together with the fact that $\varepsilon<\frac{1}{2}$, we
obtain $\beta<1$.\end{proof}
\re{\begin{remark} The linear convergence rate depends on the constant $1-\beta$. The smaller $1-\beta$ is, the faster the algorithm converges. We note that the constant $\beta$ is increasing in the minimum activation probability, $\pi$, meaning that, smaller $\pi$ results in smaller $\beta$ and hence slower convergence. To illustrate this point , consider the problem with uniform activation probabilities, i.e., $p_i=\frac{1}{n}$, for all $i$. In this case, the constant $\beta$ is of order $\frac{1}{n}$, and increasing $n$ results in slower convergence.  
\end{remark}}
\subsection{Local Quadratic Rate of Convergence}\label{sec:Quad}
\sf{We now proceed to prove local quadratic convergence rate in expectation for our asynchronous network Newton algorithm. due to technical convenience, instead of $\norm{x(t)-x^*}$, we work with weighted error $\norm{D(t-1)^{1/2}\big(x(t)-x^*\big)}$ in our analysis.
In Lemmas \ref{reverseJensen} to \ref{recursion}, we prove some key relations that we use to establish an upper bound for the weighted error $\norm{D(t-1)^{1/2}\big(x(t)-x^*\big)}$ in Lemma\ref{linquad}. This upper bound is a summation of two terms, which are linear and quadratic \ss{functions} on the weighted error corresponding to the previous iterate. In Lemma \ref{linconvX}, we show that the \ss{weighted error} sequence $\big\{ \norm{D(t-1)^{1/2}\big(x(t)-x^*\big)}\big\}$ converges linearly in expectation. Finally, in Theorem \ref{localquad}, we prove that there exists an interval where $\big\{\norm{D(t-1)^{1/2}\big(x(t)-x^*\big)}\big\}$ sequence decreases with quadratic rate.}
\begin{lemma} \label{reverseJensen} Let $X$ be a non-negative random variable with $n$ different realizations $X_i$, each happens with probability $\pi\leq q_i\leq\Pi$. Then, 
\begin{equation*}
\mathbb{E}\big[X ^{2}\big]\leq \frac{1}{\pi}\big(\mathbb{E}\big[X \big]\big)^{2}\,.\label{eq:42}
\end{equation*}
\end{lemma}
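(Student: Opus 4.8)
The plan is to prove this by a direct expansion of the two moments in terms of the discrete realizations, exploiting only nonnegativity of $X$ and the lower bound $q_i \geq \pi$ on the probabilities. Write $q_i = \mathcal{P}(X = X_i)$, so that $\mathbb{E}[X] = \sum_{i=1}^n q_i X_i$ and $\mathbb{E}[X^2] = \sum_{i=1}^n q_i X_i^2$, with $\sum_{i=1}^n q_i = 1$.

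First I would expand the square of the first moment:
\[
\big(\mathbb{E}[X]\big)^2 = \Big(\sum_{i=1}^n q_i X_i\Big)^2 = \sum_{i=1}^n q_i^2 X_i^2 + \sum_{i\neq j} q_i q_j X_i X_j.
\]
Since $X$ is non-negative, every $X_i \geq 0$, and every $q_i \geq 0$, so each cross term $q_i q_j X_i X_j$ is nonnegative. Dropping the off-diagonal sum therefore gives the lower bound $\big(\mathbb{E}[X]\big)^2 \geq \sum_{i=1}^n q_i^2 X_i^2$.

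Next I would use Assumption-type bound $q_i \geq \pi > 0$ for each $i$, which yields $q_i^2 \geq \pi q_i$, hence
\[
\big(\mathbb{E}[X]\big)^2 \geq \sum_{i=1}^n q_i^2 X_i^2 \geq \pi \sum_{i=1}^n q_i X_i^2 = \pi\, \mathbb{E}[X^2].
\]
Dividing through by $\pi$ gives the claimed inequality $\mathbb{E}[X^2] \leq \frac{1}{\pi}\big(\mathbb{E}[X]\big)^2$. There is no real obstacle here; the only point that must be handled carefully is the use of nonnegativity of $X$ (so that discarding the cross terms only decreases the right-hand side), which is exactly why the hypothesis $X \geq 0$ is stated. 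This reverse-Jensen bound will then be applied in the quadratic-convergence analysis with $X = \norm{D(t-1)^{1/2}(x(t)-x^*)}$ conditioned on $\mathcal{F}_{t-1}$, where the $n$ realizations correspond to the $n$ possible active agents with probabilities $p_i \in [\pi,\Pi]$.
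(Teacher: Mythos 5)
Your proof is correct and follows essentially the same route as the paper: both arguments rest on the elementary inequality $\sum_i a_i^2 \le \big(\sum_i a_i\big)^2$ for nonnegative $a_i$ (you apply it with $a_i = q_i X_i$ by dropping the cross terms, the paper with $a_i = q_i^{1/2}X_i$) combined with the lower bound $q_i \ge \pi$. The only difference is cosmetic — you bound $(\mathbb{E}[X])^2$ from below while the paper bounds $\mathbb{E}[X^2]$ from above — so no further changes are needed.
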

\begin{proof}
Note that since $X_i$ is non-negative, we have $\sum_{i=1}^{n}X_{i}^{2}\leq\big(\sum_{i=1}^{n}X_{i}\big)^{2}$. Therefore,
\begin{equation*}
\begin{aligned}
&\mathbb{E}\big[X^{2}\big]=\sum_{i=1}^{n}q_i X_{i}^{2}=\sum_{i=1}^{n}\big(q_i^{1/2} X_{i}\big)^{2}\leq\big(\sum_{i=1}^{n}q_i^{1/2}X_{i}\big)^{2}=\big(\sum_{i=1}^{n}q_i^{-1/2}q_i X_{i}\big)^{2}\\& \leq\big(\pi^{-1/2}\sum_{i=1}^{n}q_i X_{i}\big)^{2}=\frac{1}{\pi}\big(\mathbb{E}\big[X\big]\big)^{2},\label{eq:43}
\end{aligned}
\end{equation*} where the last equality follows from the definition of expected value of a random variable.
\end{proof}
\begin{lemma} \label{error}
Consider the approximated Hessian inverse defined in Eq. (\ref{eq:defHatH}), \sm{then the following inequality holds for all $t\geq 0$}
\[D(t)^{1/2}(I-\hat{H}(t)^{-1}H(t))=\big(D(t)^{-1/2}BD(t)^{-1/2}\big)^2D(t)^{1/2}.\]
\end{lemma}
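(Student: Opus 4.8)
The plan is to prove the identity by direct matrix algebra, exploiting the splitting $H(t)=D(t)-B$ from Eq.~\eqref{eq:Hsplit} and the explicit two-term form of $\hat H(t)^{-1}$ in Eq.~\eqref{eq:defHatH}. Throughout, recall that $D(t)$ is diagonal and positive definite (hence invertible, so $D(t)^{1/2}$ and $D(t)^{-1/2}$ are well defined, commute, and satisfy $D(t)^{1/2}D(t)^{-1/2}=I$). To keep the bookkeeping light I would abbreviate $D=D(t)$, $H=H(t)$, $\hat H^{-1}=\hat H(t)^{-1}$, and introduce the symmetric matrix $M=D^{-1/2}BD^{-1/2}$, so that $\hat H^{-1}=D^{-1/2}(I+M)D^{-1/2}$.

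First I would compute $\hat H^{-1}H$. The key simplification is
\[
D^{-1/2}H=D^{-1/2}(D-B)=D^{1/2}-D^{-1/2}B=D^{1/2}-D^{-1/2}BD^{-1/2}D^{1/2}=(I-M)D^{1/2},
\]
so that $\hat H^{-1}H=D^{-1/2}(I+M)(I-M)D^{1/2}=D^{-1/2}(I-M^2)D^{1/2}$, using that $M$ commutes with itself. Consequently
\[
I-\hat H^{-1}H=D^{-1/2}D^{1/2}-D^{-1/2}(I-M^2)D^{1/2}=D^{-1/2}M^2D^{1/2}.
\]
Multiplying on the left by $D^{1/2}$ and using $D^{1/2}D^{-1/2}=I$ gives
\[
D^{1/2}\bigl(I-\hat H^{-1}H\bigr)=M^2 D^{1/2}=\bigl(D^{-1/2}BD^{-1/2}\bigr)^2 D^{1/2},
\]
which is precisely the claimed identity, valid for every $t\ge 0$.

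There is essentially no obstacle here: the statement is an exact algebraic consequence of the definitions, and the only points warranting a word of care are the invertibility of $D(t)$ (already guaranteed by Assumption~\ref{assm:BoundedHessian} and Assumption~\ref{assm:Consensus}, as noted after Eq.~\eqref{eq:defDB}) and the commutativity of the diagonal factors $D(t)^{\pm1/2}$, which makes the cancellation $D^{1/2}D^{-1/2}=I$ and the factorization $(I+M)(I-M)=I-M^2$ legitimate. No appeal to the spectral bound $\rho<1$ from Lemma~\ref{proposition 2} is needed for this identity; that bound only enters later when the right-hand side is estimated in norm.
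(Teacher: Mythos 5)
Your proof is correct and follows essentially the same route as the paper's: both are direct algebraic verifications combining the splitting $H(t)=D(t)-B$ with the two-term form of $\hat H(t)^{-1}$, the only cosmetic difference being that the paper expands $\hat H(t)^{-1}=D(t)^{-1}+D(t)^{-1}BD(t)^{-1}$ and arrives at $(D(t)^{-1}B)^2$ before symmetrizing, whereas you keep everything in terms of $M=D(t)^{-1/2}BD(t)^{-1/2}$ and use $(I+M)(I-M)=I-M^2$. No substantive difference in content or generality.
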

\begin{proof}
By using the definition of the Hessian matrix, $H(t)$, and its approximated inverse $\hat{H}(t)^{-1}$ from equations (\ref{eq:Hsplit}) and (\ref{eq:defHatH}), we have
\begin{align*}&I-\hat{H}(t)^{-1}H(t)=I-\Big(D(t)^{-1}+D(t)^{-1}BD(t)^{-1}\Big)\big(D(t)-B\big)\\&=I-\Big(I-D(t)^{-1}B+D(t)^{-1}B-\big(D(t)^{-1}B\big)^2\Big)=(D(t)^{-1}B)^2\\&=D(t)^{-1/2}\big(D(t)^{-1/2}BD(t)^{-1/2}\big)^2D(t)^{1/2}\end{align*}
By multiplying the previous relation by $D(t)^{1/2}$ \re{from the left,} we complete the proof.
\end{proof}
\sm{\begin{lemma}\label{lemma:expectnorm}
\sf{For all $t>0$}, consider matrices $\Phi(t)$, $D(t-1)$, and $B$ defined in Eq. (\ref{eq:15}) and (\ref{eq:defDB}) and recall the definition of $\rho$ from Lemma \ref{proposition 2}, then considering the history of the algorithm up to iteration $t$, if stepsize parameter $\varepsilon$ satisfies Eq. (\ref{eq:step}), then for any $y\in\mathbb{R}^n$ 
\begin{equation*}
\mathbb{E}\left[\norm{ \Big(I-\varepsilon
P^{-1}\Phi(t)+\varepsilon P^{-1}\Phi(t)Q(t-1)\Big)y}\textbf{\Big|}\mathcal{F}_{t-1}\right]\leq C_1 \norm {y}.\label{eq:expectbound}
\end{equation*}
where $Q(t-1)=\big(D(t-1)^{-1/2}BD(t-1)^{-1/2}\big)^2$, and $C_1=\Bigg(1+\varepsilon\max\Big\{\frac{\varepsilon}{\pi}-2, \frac{\varepsilon(1-\rho^2)^2}{\pi}-2(1-\rho^2)\Big\}\Bigg)^{1/2}<1$.
\end{lemma}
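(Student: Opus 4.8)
The plan is to compute the conditional second moment $\mathbb{E}\big[\norm{(I-\varepsilon P^{-1}\Phi(t)+\varepsilon P^{-1}\Phi(t)Q(t-1))y}^2\,\big|\,\mathcal{F}_{t-1}\big]$ in closed form, then pass to the first moment via Jensen's inequality, and finally reduce everything to an elementary optimization of a scalar parabola over an interval of eigenvalues.

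First I would exploit the sparsity of the activation matrix. Conditioned on $\mathcal{F}_{t-1}$, $\Phi(t)=\Phi^i$ with probability $p_i$, and since $\Phi^i$ has a single nonzero (diagonal) entry, the vector $\big(I-\varepsilon P^{-1}\Phi^i+\varepsilon P^{-1}\Phi^i Q(t-1)\big)y$ coincides with $y$ in every coordinate except the $i$-th, where it equals $y_i-\frac{\varepsilon}{p_i}\big[(I-Q(t-1))y\big]_i$. Squaring the norm, summing over $i$ with weights $p_i$, and collecting terms using $\sum_i p_i=1$ and $w^\prime P^{-1}w=\sum_i p_i^{-1}w_i^2$, the result is the exact identity
\[
\mathbb{E}\big[\norm{(I-\varepsilon P^{-1}\Phi(t)+\varepsilon P^{-1}\Phi(t)Q(t-1))y}^2\,\big|\,\mathcal{F}_{t-1}\big]=y^\prime M(t-1)\,y,
\]
where $M(t-1)=I-2\varepsilon\big(I-Q(t-1)\big)+\varepsilon^2\big(I-Q(t-1)\big)P^{-1}\big(I-Q(t-1)\big)$. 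By Jensen's inequality $\big(\mathbb{E}[Z\mid\mathcal{F}_{t-1}]\big)^2\le\mathbb{E}[Z^2\mid\mathcal{F}_{t-1}]$ for $Z\ge 0$, it then suffices to prove the spectral bound $M(t-1)\preceq C_1^2 I$.

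To get that bound, I would note that $I-Q(t-1)$ is symmetric and $\pi I\preceq P$ gives $P^{-1}\preceq\frac1\pi I$, so $\big(I-Q(t-1)\big)P^{-1}\big(I-Q(t-1)\big)\preceq\frac1\pi\big(I-Q(t-1)\big)^2$ and hence $M(t-1)\preceq I-2\varepsilon\big(I-Q(t-1)\big)+\frac{\varepsilon^2}{\pi}\big(I-Q(t-1)\big)^2$. Diagonalizing $I-Q(t-1)$, the eigenvalues of this last matrix are $\phi(\mu)=1-2\varepsilon\mu+\frac{\varepsilon^2}{\pi}\mu^2$ evaluated at the eigenvalues $\mu$ of $I-Q(t-1)$, and Lemma \ref{proposition 2} applied to $Q(t-1)=\big(D(t-1)^{-1/2}BD(t-1)^{-1/2}\big)^2$ gives $0\preceq Q(t-1)\preceq\rho^2 I$, i.e.\ $\mu\in[1-\rho^2,1]$. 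I expect the main obstacle to be exactly this step: since $\phi$ is a convex parabola, its maximum over a closed interval is attained at an endpoint, which yields $\phi(\mu)\le 1+\varepsilon\max\{\frac{\varepsilon}{\pi}-2,\ \frac{\varepsilon(1-\rho^2)^2}{\pi}-2(1-\rho^2)\}=C_1^2$; combined with the Jensen step this finishes the inequality after taking square roots.

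It remains to check $C_1<1$, i.e.\ that both terms inside the $\max$ are negative. Using the stepsize condition $(\ref{eq:step})$ together with $\lambda\le\Lambda$ (Lemma \ref{lemma:lemma 4.2}) we get $\varepsilon<2\pi(\lambda/\Lambda)^2\le 2\pi$, so $\frac{\varepsilon}{\pi}-2<0$; and since $0<1-\rho^2\le 1$ (from $0<\rho<1$ in Lemma \ref{proposition 2}) the same bound gives $\frac{\varepsilon(1-\rho^2)}{\pi}<2$, whence $\frac{\varepsilon(1-\rho^2)^2}{\pi}-2(1-\rho^2)<0$. Therefore $\varepsilon\max\{\cdots\}<0$ and $C_1<1$, completing the proof.
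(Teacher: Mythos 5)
Your proposal is correct and follows essentially the same route as the paper's proof: the same coordinate-wise expansion of the expectation into the exact quadratic form $y'M(t-1)y$, the same Jensen step, the same bound $P^{-1}\preceq \frac{1}{\pi}I$, and the same diagonalization of $I-Q(t-1)$ with the parabola maximized at an endpoint of $[1-\rho^2,1]$. The only (harmless) difference is that you invoke convexity directly to place the maximum at an endpoint, and you spell out the negativity of both terms in the $\max$ slightly more explicitly than the paper does.
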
}
\begin{proof}  
\sf{Note that if agent $i$ is active at iteration $t$, then the activation matrix realization is $\Phi^i$ and we have}
\begin{equation*}
\Big[\Big(I-\varepsilon P^{-1}\Phi^i+\varepsilon P^{-1}\Phi^iQ(t-1)\Big)y\Big]_j=\begin{cases}
\Big[\big(I-\varepsilon P^{-1}+\varepsilon P^{-1}Q(t-1)\big)y\Big]_i & \mbox{if $j=i$},\\
y_j & \mbox{otherwise}.
\end{cases}
\end{equation*}
Hence,
\begin{align*}
\norm{\Big(I-\varepsilon P^{-1}\Phi^i+\varepsilon P^{-1}\Phi^iQ(t-1)\Big)y}^2=\Big[\Big(I-\varepsilon P^{-1}+\varepsilon P^{-1}Q(t-1)\Big)y\Big]_i^2+\norm{
y_{-i}}^2,
\end{align*}where $y_{-i}$ is a vector with a zero at $i^{th}$ element and the rest of its elements are the same as vector $y$.
Taking expectation over all possible realizations of matrix $\Phi(t)$, we obtain
\begin{align*}
&\mathbb{E}\left[\norm{ \Big(I-\varepsilon P^{-1}\Phi(t)+\varepsilon P^{-1}\Phi(t)Q(t-1)\Big)y}^2\textbf{\Big|}\mathcal{F}_{t-1}\right]\\&=\sum_{i=1}^n p_i\norm{\Big(I-\varepsilon P^{-1}\Phi^i+\varepsilon P^{-1}\Phi^iQ(t-1)\Big)y}^2\\&=\sum_{i=1}^n p_i\Big(\Big[\Big(I-\varepsilon P^{-1} +\varepsilon P^{-1}Q(t-1)\Big)y\Big]_i^2+\norm{
y_{-i}}^2\Big)\\&=\sum_{i=1}^n p_i\Big[\Big(I-\varepsilon P^{-1} +\varepsilon P^{-1}Q(t-1)\Big)y\Big]_i^2+\sum_{i=1}^n(1-p_i) y_i^2\\&= y^{\prime}\Big(I-\varepsilon P^{-1} +\varepsilon P^{-1}Q(t-1)\Big)^\prime P \Big(I-\varepsilon P^{-1} +\varepsilon P^{-1}Q(t-1)\Big)y+y^\prime (I-P)y,
\end{align*}
where the last equality comes from the fact that for any $z\in\mathbb{R}^n$ we have $z^\prime P z=\sum_{i=1}^n p_i z_i^2$. By some algebraic manipulations, we obtain
\begin{equation}
\begin{aligned}&\mathbb{E}\left[\norm{ \Big(I-\varepsilon P^{-1}\Phi(t)+\varepsilon P^{-1}\Phi(t)Q(t-1)\Big)y}^2\textbf{\Big|}\mathcal{F}_{t-1}\right]=\\&y^\prime y+y^\prime\Big(-2\varepsilon\big(I-Q(t-1)\big)+\varepsilon^2\big(I-Q(t-1)\big)P^{-1}\big(I-Q(t-1)\big)\Big)y.\label{eq:NormEXP}\end{aligned}\end{equation}  
We note that matrix $\big(I-Q(t-1)\big)$ is symmetric and $P^{-1}$ is a positive definite matrix. Therefore, for every $y\in\mathbb{R}^n $ we have \[y^\prime\big(I-Q(t-1)\big)P^{-1}\big(I-Q(t-1)\big)y\leq\frac{1}{\pi}y^\prime\big(I-Q(t-1)\big)\big(I-Q(t-1)\big)y,\] where $\frac{1}{\pi}$ is the largest eigenvalue of $P^{-1}$. Therefore, we can bound the second term in the right hand side of Eq. (\ref{eq:NormEXP}) as follows \begin{equation}\begin{aligned} &y^\prime\big(-2\varepsilon\big(I-Q(t-1)\big)+\varepsilon^2\big(I-Q(t-1)\big)P^{-1}\big(I-Q(t-1)\big)\big)y\\&\leq -2\varepsilon y^\prime\big(I-Q(t-1)\big)y+\frac{\varepsilon^2}{\pi}y^\prime\big(I-Q(t-1)\big)^2y.\label{eq:Pbound}\end{aligned}\end{equation}
We note that $\big(I-Q(t-1)\big)$ is a symmetric matrix and can be diagonalized as $\big(I-Q(t-1)\big)=VUV^\prime$, where $V\in\mathbb{R}^{n\times n}$ is an orthonormal matrix, i.e., $VV^\prime=I$, whose $i^{th}$ column $v_i$ is the eigenvector of $\big(I-Q(t-1)\big)$ and $v_i'v_j=0$ and $U$ is the diagonal matrix whose diagonal elements, $\mu_i$, are the corresponding eigenvalues. We also note that since $V$ is an orthonormal matrix, $\big(I-Q(t-1)\big)^2=VU^2V^\prime$. Using this diagonalization, we have
\begin{equation}\begin{aligned} &-2\varepsilon y^\prime\big(I-Q(t-1)\big)y+\frac{\varepsilon^2}{\pi}y^\prime\big(I-Q(t-1)\big)^2y=-2\varepsilon y^\prime VUV^\prime y+\frac{\varepsilon^2}{\pi}y^\prime VU^2V^\prime y=\\&-2\varepsilon\sum_{i=1}^n \mu_i(v_i^\prime y)^2+\frac{\varepsilon^2}{\pi}\sum_{i=1}^n \mu_i^2(v_i^\prime y)^2=\varepsilon\sum_{i=1}^n \big(\frac{\varepsilon\mu_i^2}{\pi}-2\mu_i\big)(v_i^\prime y
)^2\leq\varepsilon\max_{\mu_i}\big(\frac{\varepsilon\mu_i^2}{\pi}-2\mu_i\big)\sum_{i=1}^n(v_i^\prime y
)^2. \label{eq:diagonalization} \end{aligned}\end{equation} We note that $\frac{\varepsilon\mu_i^2}{\pi}-2\mu_i$ is a convex function in $\mu_i$ which reaches its minimum value at $\mu_i=\frac{\pi}{\varepsilon}>\frac{1}{2}$.
Considering the definition of $Q(t-1)$ and using the result of Lemma \ref{proposition 2} to bound its eigenvalues we have for all $t$\[(1-\rho^2)I\preceq I-Q(t)\preceq I,\] hence, $0<1-\rho^2\leq\mu_i\leq 1.$  Therefore, the maximum value of $\frac{\varepsilon\mu_i^2}{\pi}-2\mu_i$ happens at either $\mu_i=1-\rho^2$ or $\mu_i=1$, i.e., \begin{equation}\begin{aligned}\max_{\mu_i\in[1-\rho^2, 1]}\big(\frac{\varepsilon\mu_i^2}{\pi}-2\mu_i\big)= \max\Big\{\frac{\varepsilon}{\pi}-2, \frac{\varepsilon(1-\rho^2)^2}{\pi}-2(1-\rho^2)\Big\}. \label{eq:max}\end{aligned}\end{equation} 
Combining Eq. (\ref{eq:Pbound}), Eq. (\ref{eq:diagonalization}), and Eq. (\ref{eq:max}) and the fact that $\sum_{i=1}^n(v_i^\prime y)=y^\prime V V^\prime y=y^\prime y$, we obtain 
\begin{equation}\begin{aligned} &y^\prime\big(-2\varepsilon\big(I-Q(t-1)\big)+\varepsilon^2\big(I-Q(t-1)\big)P^{-1}\big(I-Q(t-1)\big)\big)y\\&\leq \max\Big\{\frac{\varepsilon}{\pi}-2, \frac{\varepsilon(1-\rho^2)^2}{\pi}-2(1-\rho^2)\Big\}\norm{y}^2.\label{eq:maxbound}\end{aligned}\end{equation}
We now combine Eq. (\ref{eq:NormEXP}) and Eq. (\ref{eq:maxbound}) and use Jensen's inequality to obtain
\begin{equation*}
\begin{aligned}&\Bigg(\mathbb{E}\Big[\norm{ \Big(I-\varepsilon
P^{-1}\Phi(t)+\varepsilon P^{-1}\Phi(t)Q(t-1)\Big)y}\textbf{\Big|}\mathcal{F}_{t-1}\Big]\Bigg)^2  \\&\leq\mathbb{E}\left[\norm{ \Big(I-\varepsilon P^{-1}\Phi(t)+\varepsilon P^{-1}\Phi(t)Q(t-1)\Big)y}^2\textbf{\Big|}\mathcal{F}_{t-1}\right]\\&\leq\Bigg(1+\varepsilon\max\Big\{\frac{\varepsilon}{\pi}-2, \frac{\varepsilon(1-\rho^2)^2}{\pi}-2(1-\rho^2)\Big\}\Bigg)\norm{y}^2
\end{aligned}\end{equation*}
We emphasize that since the stepsize parameter $\varepsilon$ satisfies Eq. (\ref{eq:step}), we have $\frac{\varepsilon}{\pi}<2$, therefore \[\max\Big\{\frac{\varepsilon}{\pi}-2, \frac{\varepsilon(1-\rho^2)^2}{\pi}-2(1-\rho^2)\Big\}<0.\]
Hence,
\[C_1=\Bigg(1+\varepsilon\max\Big\{\frac{\varepsilon}{\pi}-2, \frac{\varepsilon(1-\rho^2)^2}{\pi}-2(1-\rho^2)\Big\}\Bigg)^{1/2}<1\]
\end{proof}
\begin{lemma} \label{recursion} Consider the asynchronous network Newton algorithm as in Algorithm \ref{async NN}, and remember the definition of $D(t-1)$ and $B$ from Eq. (\ref{eq:defDB}), then for any $y\in\mathbb{R}^n$ we have
\[\norm {D(t-1)^{1/2}y}\leq\Big(1+C_2\norm{ g(t-2)}^{1/2}\Big)\norm{ D(t-2)^{1/2}y},\]
where $C_2=\Big(\frac{\varepsilon\alpha L \Lambda}{\pi\big(2(1-\Delta)+\alpha m\big)}\Big)^{1/2}$.
\end{lemma}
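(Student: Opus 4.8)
The plan is to control $\norm{D(t-1)^{1/2}y}^2 = y'D(t-1)y$ by writing it as $y'D(t-2)y + y'\big(D(t-1)-D(t-2)\big)y$ and showing that the perturbation term is a small multiple of $y'D(t-2)y$. Since $D(t)$ is the diagonal matrix with entries $D_{ii}(t)=\alpha\nabla^{2}f_i(x_i(t))+2(1-W_{ii})$ [c.f.\ Eq.\ \req{eq:defDB}], the difference $D(t-1)-D(t-2)=\alpha\big(G(t-1)-G(t-2)\big)$ is again diagonal, with $i^{th}$ entry $\alpha\big(\nabla^{2}f_i(x_i(t-1))-\nabla^{2}f_i(x_i(t-2))\big)$. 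By the Lipschitz Hessian assumption (Assumption \ref{assm:LipHessian}) each such entry is bounded in magnitude by $\alpha L\,|x_i(t-1)-x_i(t-2)|\le \alpha L\,\norm{x(t-1)-x(t-2)}$, so that $y'\big(D(t-1)-D(t-2)\big)y\le \alpha L\,\norm{x(t-1)-x(t-2)}\,\norm{y}^{2}$.

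The next step is to bound $\norm{x(t-1)-x(t-2)}$ using the aggregated update rule \req{eq:xUpdateAsync} shifted by one index, i.e.\ $x(t-1)-x(t-2)=-\varepsilon P^{-1}\Phi(t-1)\hat H(t-2)^{-1}g(t-2)$. Because exactly one agent is active per iteration, $\Phi(t-1)$ is a single-coordinate projector, so $P^{-1}\Phi(t-1)$ scales that one coordinate by $1/p_i\le 1/\pi$ (Assumption \ref{ass:activeprob}) and kills the rest; hence $\norm{P^{-1}\Phi(t-1)v}\le \tfrac{1}{\pi}\norm{v}$ for every $v$. Combining this with the upper bound $\hat H(t-2)^{-1}\preceq\Lambda I$ from Lemma \ref{lemma:lemma 4.2} gives $\norm{x(t-1)-x(t-2)}\le \tfrac{\varepsilon\Lambda}{\pi}\norm{g(t-2)}$.

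Finally I would use the lower bound $D(t-2)\succeq\big(2(1-\Delta)+\alpha m\big)I$ from Lemma \ref{lemma 1} to replace $\norm{y}^{2}$ by $\tfrac{1}{2(1-\Delta)+\alpha m}\,y'D(t-2)y=\tfrac{1}{2(1-\Delta)+\alpha m}\,\norm{D(t-2)^{1/2}y}^{2}$. Chaining the three estimates yields $\norm{D(t-1)^{1/2}y}^{2}\le\big(1+C_2^{2}\norm{g(t-2)}\big)\norm{D(t-2)^{1/2}y}^{2}$ with $C_2^{2}=\tfrac{\varepsilon\alpha L\Lambda}{\pi\big(2(1-\Delta)+\alpha m\big)}$, exactly the stated constant. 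Taking square roots and using the elementary inequality $\sqrt{1+a}\le 1+\sqrt a$ for $a\ge 0$ (so that $\sqrt{C_2^2\norm{g(t-2)}}=C_2\norm{g(t-2)}^{1/2}$) gives the claim.

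\textbf{Main obstacle.} The only delicate point is the exploitation of the asynchronous update structure: both $|x_i(t-1)-x_i(t-2)|\le\norm{x(t-1)-x(t-2)}$ and $\norm{P^{-1}\Phi(t-1)v}\le\tfrac1\pi\norm v$ hinge on $\Phi(t-1)$ being a rank-one coordinate projector, i.e.\ on the assumption that only one agent updates at each iteration; the remaining manipulations are a routine chain of norm and eigenvalue bounds. I would also note that, unlike the earlier lemmas, this bound holds deterministically for every realization of $\Phi(t-1)$, so no conditional expectation is involved here.
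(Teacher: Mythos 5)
Your proposal is correct and follows essentially the same route as the paper's proof: a Lipschitz-Hessian bound on $D(t-1)-D(t-2)$, the update rule \req{eq:xUpdateAsync} combined with $\norm{P^{-1}\Phi(t-1)}\le 1/\pi$ and $\hat H(t-2)^{-1}\preceq\Lambda I$ to bound $\norm{x(t-1)-x(t-2)}$, and the lower eigenvalue bound on $D(t-2)$ to convert $\norm{y}$ into $\norm{D(t-2)^{1/2}y}$. The only (cosmetic) difference is that you factor out $\norm{D(t-2)^{1/2}y}^{2}$ before taking square roots and invoke $\sqrt{1+a}\le 1+\sqrt{a}$, whereas the paper splits the two terms first via $a^{2}\le b^{2}+c^{2}\Rightarrow|a|\le|b|+|c|$ after an initial case distinction that your ordering renders unnecessary.
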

\begin{proof}
We note that if $\norm{ D(t-1)^{1/2}y}\leq \norm {D(t-2)^{1/2}y}$, the claim is true because $C_2>0$. Therefore, we consider the case with $\norm{ D(t-1)^{1/2}y}> \norm {D(t-2)^{1/2}y}$. We next use the Lipschitz property of the Hessian \sm{ [c.f. Lemma \ref{lemma 01}],} to obtain 
\begin{equation*}\norm{ D(t-1)-D(t-2)}=\norm {H(t-1)-H(t-2)}\leq\alpha L\norm{ x(t-1)-x(t-2)}.  \label{eq:recursion}\end{equation*}
where $\alpha L$ is the Lipschitz constant of the Hessian matrix according to the result of Lemma \ref{lemma 01}. We also note that \[\Big\vert y^\prime D(t-1)y - y^\prime D(t-2)y \Big\vert=\big\vert y^{\prime} \big(D(t-1)-D(t-2)\big)y\big\vert\leq\alpha L\norm{ x(t-1)-x(t-2)}\norm{ y}^2.\]
Note that $y^\prime D(t-1)y =\norm{ D(t-1)^{1/2}y}^2$ and  $y^\prime D(t-2)y =\norm {D(t-2)^{1/2}y}^2$, then using triangular inequality together with the fact that $\norm{ D(t-1)^{1/2}y}> \norm{D(t-2)^{1/2}y}$, we have
\begin{equation*}\norm{ D(t-1)^{1/2}y}^2\leq\norm{ D(t-2)^{1/2}y }^2+\alpha L\norm{ x(t-1)-x(t-2)}\norm {y}^2. \end{equation*}
\sm{We note that for every $a,b,c\in\mathbb{R}$, if $a^2\leq b^2+c^2$ then we have $\vert a\vert\leq\vert b\vert+\vert c\vert$ . Therefore, }
\begin{equation}\norm{ D(t-1)^{1/2}y}\leq\norm {D(t-2)^{1/2}y }+\Big(\alpha L\norm{ x(t-1)-x(t-2)}\Big)^{1/2}\norm{ y}.\label{recurbound} \end{equation}
In this step, we find an upper bound for $\norm{y}$ in terms of $\norm{ D(t-2)^{1/2}y}$. We note that \begin{align*}\mu_{min}\big(D(t-2)^{1/2}\big)\norm {y}\leq\norm{ D(t-2)^{1/2}y},\end{align*} where $\mu_{min}\big(D(t-2)^{1/2}\big)$ is the minimum eigenvalue of the positive definite matrix $D(t-2)^{1/2}$. Hence, \sm{using the result of Lemma \ref{lemma 1}, we have}
\begin{align}\norm{ y}\leq \frac{1}{\mu_{min}\big(D(t-2)^{1/2}\big)}\norm{ D(t-2)^{1/2}y}\leq \frac{1}{\big(2(1-\Delta)+\alpha m\big)^{1/2}}\norm{ D(t-2)^{1/2}y}.\label{boundNorm}\end{align}
We next combine relations (\ref{recurbound}) and (\ref{boundNorm}) to obtain 
\begin{align*}
\norm{D(t-1)^{1/2}y}\leq\Bigg(1+\Bigg(\frac{\alpha L\norm{ x(t-1)-x(t-2)}}{2(1-\Delta)+\alpha m}\Bigg)^{1/2}\Bigg)\norm{ D(t-2)^{1/2}y}
\end{align*}
Finally, we use the asynchronous network Newton iteration defined in Eq. (\ref{eq:xUpdateAsync}) to substitute $x(t-1)-x(t-2)=-\varepsilon P^{-1}\Phi(t-1)\hat{H}(t-2)^{-1}g(t-2)$ to get \begin{align*}&\norm {D(t-1)^{1/2}y}\leq\\&\Bigg(1+\Big(\frac{\varepsilon\alpha L}{2(1-\Delta)+\alpha m}\Big)^{1/2}\norm{ P^{-1}\Phi(t-1)\hat{H}(t-2)^{-1}g(t-2)}^{1/2}\Bigg)\norm{ D(t-2)^{-1}y}.\end{align*}
By using Cauchy-Schwarz inequality and the facts that $\norm{\hat{H}(t-2)^{-1}}\leq\Lambda$ [c.f. Lemma \ref{lemma:lemma 4.2}], $\norm{P^{-1}}\leq\frac{1}{\pi}$ and $\norm{\Phi(t-1)}=1$ for all realizations, we complete the proof.
\end{proof}
\sm{\begin{lemma} \label{linquad} Consider the asynchronous network Newton algorithm as in Algorithm \ref{async NN} with stepsize parameter $\varepsilon$ that satisfies Eq. (\ref{eq:step}), and recall the definition of $0<\beta<1$ from Theorem \ref{linconv} , $\rho<1$ from Lemma \ref{proposition 2}, $\lambda$ and $Lambda$ from Lemma \ref{lemma:lemma 4.2},  $C_1<1$ from Lemma \ref{lemma:expectnorm}, and $C_2$ from Lemma \ref{recursion}, then the sequence the \sf{weighted} errors $\left\{ \norm{ D(t-1)^{1/2}\big(x(t)-x^*\big)}\right\}$ satisfies
\begin{equation*}
\begin{aligned}
&\mathbb{E}\Big[\norm{ D(t-1)^{1/2}\big(x(t)-x^*\big)}\Big]\leq \Gamma_1\Bigg(\mathbb{E}\Big[\norm{ D(t-2)^{1/2}\big(x(t-1)-x^*\big)}\Big]\Bigg)^2\\&+\Gamma(t)\mathbb{E}\Big[\norm{D(t-2)^{1/2}\big(x(t-1)-x^*\big)}\Big], 
\end{aligned}
\end{equation*}
\noindent where $\Gamma_{1}=\frac{\big(2(1-\delta)+\alpha M\big)^{1/2}\alpha L\varepsilon \Lambda}{2\pi^2\big(2(1-\Delta)+\alpha m\big)}$ and $\Gamma(t)=C_1\Big(1+C_3(1-\beta)^{\frac{t-2}{4}}\Big)$ with $C_3=C_2 \Big(\frac{2}{\lambda\pi^2}\big(F(x(0))-F^*\big)\Big)^{1/4}.$
\end{lemma}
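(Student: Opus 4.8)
The plan is to derive a one-step recursion for the $D$-weighted error $e(t):=x(t)-x^*$ and then take expectations, as in the classical analysis of inexact Newton iterations. Starting from the aggregated update $x(t)=x(t-1)-\varepsilon P^{-1}\Phi(t)\hat{H}(t-1)^{-1}g(t-1)$ in (\ref{eq:xUpdateAsync}) and using $g(x^*)=0$, I would write $g(t-1)=H(t-1)\big(x(t-1)-x^*\big)+r(t-1)$, where the remainder obeys $\norm{r(t-1)}\le\frac{\alpha L}{2}\norm{x(t-1)-x^*}^2$ by Lemma~\ref{lemma:lemma 4.8} together with the $\alpha L$-Lipschitzness of $H$ from Lemma~\ref{lemma 01}. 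Substituting and regrouping,
\[
x(t)-x^*=\big(I-\varepsilon P^{-1}\Phi(t)\hat{H}(t-1)^{-1}H(t-1)\big)\big(x(t-1)-x^*\big)-\varepsilon P^{-1}\Phi(t)\hat{H}(t-1)^{-1}r(t-1).
\]

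Next I would left-multiply by $D(t-1)^{1/2}$. Since $D(t-1)$, $P$ and $\Phi(t)$ are diagonal they commute, and Lemma~\ref{error} gives $D(t-1)^{1/2}\big(I-\hat{H}(t-1)^{-1}H(t-1)\big)=Q(t-1)D(t-1)^{1/2}$ with $Q(t-1)=\big(D(t-1)^{-1/2}BD(t-1)^{-1/2}\big)^2$, so
\[
D(t-1)^{1/2}\big(x(t)-x^*\big)=\big(I-\varepsilon P^{-1}\Phi(t)+\varepsilon P^{-1}\Phi(t)Q(t-1)\big)D(t-1)^{1/2}\big(x(t-1)-x^*\big)-\varepsilon D(t-1)^{1/2}P^{-1}\Phi(t)\hat{H}(t-1)^{-1}r(t-1).
\]
Taking norms and then the conditional expectation given $\mathcal{F}_{t-1}$: the first term is bounded by Lemma~\ref{lemma:expectnorm} applied to $y=D(t-1)^{1/2}(x(t-1)-x^*)$, producing the factor $C_1$; the second term is bounded deterministically using $\norm{D(t-1)^{1/2}}\le\big(2(1-\delta)+\alpha M\big)^{1/2}$ and the lower bound $D(t-2)\succeq\big(2(1-\Delta)+\alpha m\big)I$ (Lemma~\ref{lemma 1}), $\norm{P^{-1}}\le 1/\pi$, $\norm{\Phi(t)}\le1$, $\norm{\hat{H}(t-1)^{-1}}\le\Lambda$ (Lemma~\ref{lemma:lemma 4.2}), and the bound on $\norm{r(t-1)}$, after converting $\norm{x(t-1)-x^*}^2$ into $\norm{D(t-2)^{1/2}(x(t-1)-x^*)}^2$. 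To replace the $D(t-1)^{1/2}$ weight in the first term by $D(t-2)^{1/2}$ I would invoke Lemma~\ref{recursion}, at the cost of the multiplicative factor $1+C_2\norm{g(t-2)}^{1/2}$.

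The final step is to take the remaining expectation and match the claimed form. For the quadratic piece I would use the reverse-Jensen inequality of Lemma~\ref{reverseJensen} — legitimately, since conditioned on $\mathcal{F}_{t-2}$ the quantity $\norm{D(t-2)^{1/2}(x(t-1)-x^*)}$ takes $n$ values with probabilities in $[\pi,\Pi]$ — to pass from the second moment of $\norm{D(t-2)^{1/2}(x(t-1)-x^*)}$ to $1/\pi$ times the square of its first moment, which produces the extra $\pi$ in $\Gamma_1=\frac{(2(1-\delta)+\alpha M)^{1/2}\alpha L\varepsilon\Lambda}{2\pi^2(2(1-\Delta)+\alpha m)}$. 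For the linear piece, $\norm{g(t-2)}^{1/2}$ is correlated with $\norm{D(t-2)^{1/2}(x(t-1)-x^*)}$, so I would split them with Cauchy--Schwarz, bound $\mathbb{E}\big[\norm{g(t-2)}\big]\le\big(\mathbb{E}\big[\norm{g(t-2)}^2\big]\big)^{1/2}\le\big(\tfrac{2}{\lambda}\,\mathbb{E}[F(x(t-2))-F^*]\big)^{1/2}$ via Lemma~\ref{functionbound} and $1/\lambda=2(1-\delta)+\alpha M$, feed in the global linear rate $\mathbb{E}[F(x(t-2))-F^*]\le(1-\beta)^{t-2}\big(F(x(0))-F^*\big)$ from Theorem~\ref{linconv}, and apply Lemma~\ref{reverseJensen} once more to the surviving error factor; collecting constants yields $\Gamma(t)=C_1\big(1+C_3(1-\beta)^{(t-2)/4}\big)$ with $C_3=C_2\big(\tfrac{2}{\lambda\pi^2}(F(x(0))-F^*)\big)^{1/4}$.

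I expect the main obstacle to be this last expectation-handling step: one must keep careful track of which quantities are measurable with respect to $\mathcal{F}_{t-1}$ versus $\mathcal{F}_{t-2}$, decide where ordinary Jensen is used (to move the powers $(\cdot)^{1/2}$ and $(\cdot)^{1/4}$ through $\mathbb{E}$) versus the reverse Jensen of Lemma~\ref{reverseJensen} (which is only valid conditioned on a single activation), and arrange the Cauchy--Schwarz split so that the error reappears to the first power rather than squared. By contrast, the algebraic identity of Lemma~\ref{error}, the commutation of the diagonal matrices $D(t-1),P,\Phi(t)$, and the operator-norm bounds are routine once the recursion is in place.
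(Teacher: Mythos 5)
Your proposal is correct and follows essentially the same route as the paper's proof: the same decomposition of $x(t)-x^*$ (your compact form $\big(I-\varepsilon P^{-1}\Phi(t)\hat{H}(t-1)^{-1}H(t-1)\big)(x(t-1)-x^*)-\varepsilon P^{-1}\Phi(t)\hat{H}(t-1)^{-1}r(t-1)$ is algebraically identical to the paper's three-term split), the same use of Lemmas \ref{error}, \ref{lemma:expectnorm}, \ref{recursion}, \ref{lemma:lemma 4.8}, and the same final expectation-handling via Cauchy--Schwarz, the linear rate from Theorem \ref{linconv}, and Lemma \ref{reverseJensen} applied to both the quadratic and linear pieces. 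The only cosmetic difference is that you bound $\mathbb{E}\big[\norm{g(t-2)}\big]$ through the second moment rather than the paper's Jensen-for-concave-functions step, which yields the same constant.
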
}
\begin{proof} 
By adding and subtracting $x(t-1)$ and $\varepsilon P^{-1}\Phi(t)\hat{H}(t-1)^{-1}H(t-1)\left(x(t-1)-x^*\right)$ from $x(t)-x^*$ we have
\begin{equation*}
\begin{aligned}
&x(t)-x^*=x(t)-x(t-1)+\varepsilon P^{-1}\Phi(t)\hat{H}(t-1)^{-1}H(t-1)\left(x(t-1)-x^*\right)+x(t-1)-x^*\\&-\varepsilon P^{-1}\Phi(t)\hat{H}(t-1)^{-1}H(t-1)\left(x(t-1)-x^*\right).
\end{aligned}
\end{equation*}
We next substitute $x(t)-x(t-1)=-\varepsilon P^{-1}\Phi(t)\hat{H}(t-1)^{-1}g(t-1)$ using Eq. (\ref{eq:xUpdateAsync}) and add and subtract $\varepsilon P^{-1}\Phi(t)\big(x(t-1)-x^*\big)$ in the above relation to obtain
\begin{align*} &x(t)-x^*=\varepsilon P^{-1}\Phi(t)\hat{H}(t-1)^{-1}\Big(H(t-1)\big(x(t-1)-x^*\big)-g(t-1)\Big)\\&+\big(I-\varepsilon P^{-1}\Phi(t)\big)\big(x(t-1)-x^*\big)+\varepsilon P^{-1}\Phi(t)\big(I-\hat{H}(t-1)^{-1}H(t-1)\big)\big(x(t-1)-x^*\big).\end{align*}
By multiplying both sides of the previous equality by the diagonal matrix $D(t-1)^{1/2}$ and using the result of Lemma \ref{error} that  $D(t-1)^{1/2}\big(I-\hat{H}(t-1)^{-1}H(t-1)\big)=\big(D(t-1)^{-1/2}BD(t-1)^{-1/2}\big)^2D(t-1)^{1/2}$, we have
\begin{align*}&D(t-1)^{1/2}\big(x(t)-x^*\big)=\varepsilon P^{-1} D(t-1)^{1/2}\Phi(t)\hat{H}(t-1)^{-1}\Big(H(t-1)\big(x(t-1)-x^*\big)-g(t-1)\Big)\\&+\big(I-\varepsilon P^{-1}\Phi(t)\big)D(t-1)^{1/2}\big(x(t-1)-x^*\big)\\&+\varepsilon P^{-1}\Phi(t)\big(D(t-1)^{-1/2}BD(t-1)^{-1/2}\big)^2D(t-1)^{1/2}\big(x(t-1)-x^*\big),\end{align*}
\sf{where we used the commutative property of the multiplication of diagonal matrices $D(t-1)^{1/2} \,, \,P\,, \,\Phi(t)\,,$ and $(I-\varepsilon P^{-1}\Phi(t))$.} We then take norms on both sides and use triangular and Cauchy-Schwarz inequalities to obtain
\begin{equation}
\begin{aligned}
&\norm{ D(t-1)^{1/2}\big(x(t)-x^*\big)}\leq\\&\varepsilon\norm{ P^{-1}D(t-1)^{1/2}\Phi(t)\hat{H}(t-1)^{-1}}\norm{ H(t-1)\big(x(t-1)-x^*\big)-g(t-1)}\\&+\norm{\Big( I-\varepsilon P^{-1}\Phi(t)+\varepsilon P^{-1}\Phi(t)\big(D(t-1)^{-1/2}BD(t-1)^{-1/2}\big)^2\Big) D(t-1)^{1/2}\big(x(t-1)-x^*\big)}. \label{eq:xNorm}
\end{aligned}
\end{equation}
\fm{ We next find an upper  bound, in terms of $\norm{ D(t-2)^{1/2}\big(x(t-1)-x^*\big)}$, for the first term of the summation in the right hand side of Eq. (\ref{eq:xNorm}). 
Applying the result of Lemma \ref{lemma:lemma 4.8} with $v=x^*$ and $u=x(t-1)$ and considering the fact that $\nabla F(x^*)=0$, yield
\begin{align}&\norm{ H(t-1)\big(x(t-1)-x^*\big)-g(t-1)}\leq\frac{\alpha L}{2}\norm{ x(t-1)-x^*}^2, \label{eq:intbound}\end{align} where $\alpha L$ is the Lipschitz constant of the Hessian matrix according to the result of Lemma \ref{lemma 01}. 
Using the definition of $\mu_{min}(\cdot)$ we have \[\norm{ x(t-1)-x^*}\leq\frac{1}{\mu_{min}\big(D(t-2)^{1/2}\big)}\norm{ D(t-2)^{1/2}\big(x(t-1)-x^*\big)},\] \sf{hence, by using Lemma \ref{lemma 1} to bound $\mu_{min}\big(D(t-2)^{1/2}\big)$ we have }\begin{equation}\begin{aligned}\norm{ x(t-1)-x^*}^2\leq\frac{1}{2(1-\Delta)+\alpha m}\norm{ D(t-2)^{1/2}\big(x(t-1)-x^*\big)}^2.\label{eq:boundx}\end{aligned}\end{equation}
We next combine Eq. (\ref{eq:intbound}) and Eq. (\ref{eq:boundx}) and use the upper bounds $\norm{P^{-1}}\leq\frac{1}{\pi}$, $\norm{ D(t-1)^{1/2}}\leq\big(2(1-\delta)+\alpha M\big)^{1/2}$, and $\norm{\hat{H}(t-1)^{-1}}\leq\Lambda$ [c.f. Assumption \ref{ass:activeprob}, Lemma \ref{lemma 1}, and Lemma \ref{lemma:lemma 4.2}], together with the fact that \sf{for all realizations of the stochastic activation matrix,} $\norm{\Phi(t)}=1$, and obtain
\begin{equation}\begin{aligned}&\varepsilon\norm{ P^{-1}D(t-1)^{1/2}\Phi(t)\hat{H}(t-1)^{-1}}\norm{ H(t-1)\big(x(t-1)-x^*\big)-g(t-1)}\\&\leq\frac{\big(2(1-\delta)+\alpha M\big)^{1/2}\alpha L\varepsilon \Lambda}{2\pi\big(2(1-\Delta)+\alpha m\big)}\norm{ D(t-2)^{1/2}\big(x(t-1)-x^*\big)}^2.\label{eq:firstTerm}\end{aligned}\end{equation}
We now substitute Eq. (\ref{eq:firstTerm}) in Eq. (\ref{eq:xNorm}) to obtain
\begin{equation*}
\begin{aligned}
&\norm{D(t-1)^{1/2}\big(x(t)-x^*\big)}\leq\frac{\big(2(1-\delta)+\alpha M\big)^{1/2}\alpha L\varepsilon \Lambda}{2\pi\big(2(1-\Delta)+\alpha m\big)}\norm{ D(t-2)^{1/2}\big(x(t-1)-x^*\big)}^2+\\&\norm{\Big( I-\varepsilon P^{-1}\Phi(t)+\varepsilon P^{-1}\Phi(t)\big(D(t-1)^{-1/2}BD(t-1)^{-1/2}\big)^2\Big) D(t-1)^{1/2}\big(x(t-1)-x^*\big)}.
\end{aligned}
\end{equation*}
This inequality holds for any random activation of the agents. We now note that conditioned on $\mathcal{F}_{t-1}$, matrix $\Phi(t)$  and $x(t)$ are random and $x(t-1)$ is deterministic, we can hence take expectation on both sides of the above inequality and have
\begin{equation}
\begin{aligned}
&\mathbb{E}\Big[\norm{D(t-1)^{1/2}\big(x(t)-x^*\big)}\textbf{\Big|}\mathcal{F}_{t-1}\Big]\leq\\&\frac{\big(2(1-\delta)+\alpha M\big)^{1/2}\alpha L\varepsilon \Lambda}{2\pi\big(2(1-\Delta)+\alpha m\big)}\norm{ D(t-2)^{1/2}\big(x(t-1)-x^*\big)}^2+\mathbb{E}\Big[\Big\Vert\Big( I-\varepsilon P^{-1}\Phi(t)\\&+\varepsilon P^{-1}\Phi(t)\big(D(t-1)^{-1/2}BD(t-1)^{-1/2}\big)^2\Big) D(t-1)^{1/2}\big(x(t-1)-x^*\big)\Big\Vert\textbf{\Big|}\mathcal{F}_{t-1}\Big].\label{eq:afterE}
\end{aligned}
\end{equation}
We then consider the second term of the summation in the right hand side of Eq. (\ref{eq:afterE}). Using the result of Lemma \ref{lemma:expectnorm} with $y=D(t-1)^{1/2}\big(x(t-1)-x^*\big)$ we have
\begin{equation}\begin{aligned}&\mathbb{E}\Big[\Big\Vert\Big( I-\varepsilon P^{-1}\Phi(t)+\varepsilon P^{-1}\Phi(t)\big(D(t-1)^{-1/2}BD(t-1)^{-1/2}\big)^2\Big)\times\\& D(t-1)^{1/2}\big(x(t-1)-x^*\big)\Big\Vert\textbf{\Big|}\mathcal{F}_{t-1}\Big]\leq C_1 \norm{D(t-1)^{1/2}\big(x(t-1)-x^*\big)}\\&\leq C_1\Big(1+C_2\norm{ g(t-2)}^{1/2}\Big)\norm{ D(t-2)^{1/2}\big(x(t-1)-x^*\big)}, \label{eq:secondTerm}\end{aligned}\end{equation} where in the second inequality we use Lemma \ref{recursion} to bound $\norm{ D(t-1)^{1/2}\big(x(t-1)-x^*\big)}$ in terms of $\norm{ D(t-2)^{1/2}\big(x(t-1)-x^*\big)}$.
We now substitute Eq. (\ref{eq:secondTerm}) in Eq. (\ref{eq:afterE}) to obtain
\begin{equation}
\begin{aligned}
&\mathbb{E}\Big[\norm{ D(t-1)^{1/2}\big(x(t)-x^*\big)}\textbf{\Big|}\mathcal{F}_{t-1}\Big]\leq\\&\frac{\big(2(1-\delta)+\alpha M\big)^{1/2}\alpha L\varepsilon \Lambda}{2\pi\big(2(1-\Delta)+\alpha m\big)}\norm{ D(t-2)^{1/2}\big(x(t-1)-x^*\big)}^2\\&+ C_1\Big(1+C_2\norm{ g(t-2)}^{1/2}\Big)\norm{ D(t-2)^{1/2}\big(x(t-1)-x^*\big)}. \label{eq:condexp}
\end{aligned}
\end{equation}
We next take expectations on both sides of Eq. (\ref{eq:condexp}) and have
\begin{equation}
\begin{aligned}
&\mathbb{E}\Big[\norm{ D(t-1)^{1/2}\big(x(t)-x^*\big)}\Big]\leq\\&\frac{\big(2(1-\delta)+\alpha M\big)^{1/2}\alpha L\varepsilon \Lambda}{2\pi\big(2(1-\Delta)+\alpha m\big)}\mathbb{E}\Big[\norm{ D(t-2)^{1/2}\big(x(t-1)-x^*\big)}^2\Big] \\&+C_1\mathbb{E}\Big[\Big(1+C_2\norm{ g(t-2)}^{1/2}\Big)\norm{ D(t-2)^{1/2}\big(x(t-1)-x^*\big)}\Big]. \label{expineq}
\end{aligned}
\end{equation}
We now focus on the second expected value in the right hand side of Eq. (\ref{expineq}). We have 
\begin{equation}\begin{aligned} &\mathbb{E}\Big[\Big(1+C_2\norm{g(t-2)}^{1/2}\Big)\norm{ D(t-2)^{1/2}\big(x(t-1)-x^*\big)}\Big]=\\&\mathbb{E}\Big[\norm{ D(t-2)^{1/2}\big(x(t-1)-x^*\big)}\Big]+  C_2  \mathbb{E}\Big[\norm{g(t-2)}^{1/2}\norm{ D(t-2)^{1/2}\big(x(t-1)-x^*\big)}\Big].\label{eq:Eproduct}\end{aligned}\end{equation}
We next study the second term in Eq. (\ref{eq:Eproduct}). We note that the Cauchy-Schwarz inequality in the context of the expectation states that for any two random variables $X$ and $Y$ such that $\mathbb{E}[X]$, $\mathbb{E}[Y]$, and $\mathbb{E}[XY]$ exist, we have 
\[\Big(\mathbb{E}[XY]\Big)^2\leq\mathbb{E}\big[X^2\big]\mathbb{E}\big[Y^2\big],\] \sf{hence, if $X,Y\geq 0$ we have  \[\mathbb{E}[X^{1/2}Y]\leq\Big(\mathbb{E}\big[X\big]\mathbb{E}\big[Y^2\big]\Big)^{1/2}.\]}
Therefore,
\begin{equation}\begin{aligned}&\mathbb{E}\Big[\norm{ g(t-2)}^{1/2}\norm{ D(t-2)^{1/2}\big(x(t-1)-x^*\big)}\Big]\\&\leq\Bigg(\mathbb{E}\big[\norm{ g(t-2)}\big]\Bigg)^{1/2}\Bigg(\mathbb{E}\Big[\norm{ D(t-2)^{1/2}\big(x(t-1)-x^*\big)}^2\Big]\Bigg)^{1/2}.\label{eq:productbound}\end{aligned}
\end{equation}
We next use the result of Lemma \ref{functionbound} on the properties of strongly convex functions together with the fact that $H(t)\preceq \big(2(1-\delta)+\alpha M\big)I=\frac{1}{\lambda}I$, to find an upper bound for $\mathbb{E}\Big[\norm{ g(t-2)}\Big]$ as follows
\[\norm{ g(t-2)}\leq\Big(\frac{2}{\lambda}\Big(F\big(x(t-2)\big)-F^{*}\Big)\Big)^{1/2}.\]
By taking expectation \sf{on both sides} of the above inequality and using the Jensen's inequality for concave functions together with the linear convergence result from Theorem \ref{linconv}, we have
\begin{equation}\begin{aligned}&\mathbb{E}\big[\norm{ g(t-2)}\big]
\leq\mathbb{E}\Big[\Big(\frac{2}{\lambda}\Big(F\big(x(t-2)\big)-F^{*}\Big)\Big)^{1/2}\Big]\leq\Big(\frac{2}{\lambda}\Big)^{1/2}\Big(\mathbb{E}\big[F\big(x(t-2)\big)-F^{*}\big]\Big)^{1/2}\\&\leq\Big(\frac{2}{\lambda}\Big)^{1/2}(1-\beta)^{\frac{t-2}{2}}\big(F\big(x(0)\big)-F^{*}\big)^{1/2}. \label{eq:boundgNorm}\end{aligned}\end{equation}
We also note that considering the result of Lemma \ref{reverseJensen} we have
\begin{equation}\begin{aligned}\mathbb{E}\Big[\norm{ D(t-2)^{1/2}\big(x(t-1)-x^*\big)}^2\Big]\leq \frac{1}{\pi}\Bigg(\mathbb{E}\Big[\norm{ D(t-2)^{1/2}\big(x(t-1)-x^*\big)}\Big]\Bigg)^{2}. \label{boundxNorm}\end{aligned}\end{equation}
By substituting Eq. (\ref{eq:boundgNorm}) and Eq. (\ref{boundxNorm}) in Eq. (\ref{eq:productbound}) and combining the result with Eq. (\ref{eq:Eproduct}), we obtain \begin{equation}\begin{aligned}&\mathbb{E}\Big[\Big(1+C_2\norm{g(t-2)}^{1/2}\Big)\norm{ D(t-2)^{1/2}\big(x(t-1)-x^*\big)}\Big]\\&\leq \Bigg(1+C_2\Big(\frac{2\big(F(x(0))-F^*\big)}{\lambda\pi^2}(1-\beta)^{t-2}\Big)^{1/4}\Bigg)\mathbb{E}\Big[\norm{ D(t-2)^{1/2}\big(x(t-1)-x^*\big)}\Big].\label{lastbound}\end{aligned}\end{equation}
Finally, we combine  Eq. (\ref{lastbound}), Eq. (\ref{boundxNorm}) and Eq. (\ref{expineq}) to obtain
\begin{equation*}
\begin{aligned}
&\mathbb{E}\Big[\norm{ D(t-1)^{1/2}\big(x(t)-x^*\big)}\Big]\leq\\& \frac{\big(2(1-\delta)+\alpha M\big)^{1/2}\alpha L\varepsilon \Lambda}{2\pi^2\big(2(1-\Delta)+\alpha m\big)}\Bigg(\mathbb{E}\Big[\norm{ D(t-2)^{1/2}\big(x(t-1)-x^*\big)}\Big]\Bigg)^2\\&+ C_1\Bigg(1+C_2\Big(\frac{2\big(F(x(0))-F^*\big)}{\lambda\pi^2}(1-\beta)^{t-2}\Big)^{1/4}\Bigg)\mathbb{E}\Big[\norm{ D(t-2)^{1/2}\big(x(t-1)-x^*\big)}\Big]. \label{expineqnew} 
\end{aligned}
\end{equation*}}
\end{proof}
\begin{lemma}\label{linconvX}
Consider the asynchronous network Newton iterate as in Algorithm \ref{async NN},   if the stepsize parameter $\varepsilon$ satisfies Eq. (\ref{eq:step}), then the sequence $\big\{\norm{ D(t-1)^{1/2} \big(x(t)-x^*\big)}\}$ converges linearly in expectation.
\end{lemma}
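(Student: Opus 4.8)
Write $e_t := \mathbb{E}\big[\norm{D(t-1)^{1/2}(x(t)-x^*)}\big]$ for the expected weighted error. Lemma \ref{linquad} supplies the scalar recursion $e_t \leq \Gamma_1 e_{t-1}^2 + \Gamma(t)\,e_{t-1}$, where $\Gamma_1>0$ is constant and $\Gamma(t)=C_1\big(1+C_3(1-\beta)^{(t-2)/4}\big)$ with $C_1<1$ from Lemma \ref{lemma:expectnorm} and $0<\beta<1$ from Theorem \ref{linconv}; since $1-\beta\in(0,1)$ the factor $(1-\beta)^{(t-2)/4}$ is nonincreasing in $t$, so $\Gamma(t)$ decreases to $C_1<1$. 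The plan is to first establish $e_t\to 0$, then use that fact to turn the recursion into a genuine geometric contraction past some index, and finally absorb the finitely many early terms into a multiplicative constant.

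First I would show that $e_t\to 0$ geometrically. By Lemma \ref{lemma 1} we have $D(t-1)\preceq\big(2(1-\delta)+\alpha M\big)I$, hence $e_t\leq\big(2(1-\delta)+\alpha M\big)^{1/2}\,\mathbb{E}\big[\norm{x(t)-x^*}\big]$, and the right-hand side is bounded by a geometrically decaying sequence by Theorem \ref{linconv} (the estimate in Eq. \eqref{eq:linX}). In particular $e_{t-1}\to 0$, and each $e_t$ is finite.

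Next, since $\Gamma(t)\to C_1<1$ and $e_{t-1}\to 0$, I would fix any $\kappa$ with $C_1<\kappa<1$ and pick $T$ large enough that $\Gamma_1 e_{t-1}+\Gamma(t)\leq\kappa$ for every $t\geq T$. For such $t$ the recursion gives $e_t\leq\big(\Gamma_1 e_{t-1}+\Gamma(t)\big)e_{t-1}\leq\kappa\,e_{t-1}$, so that $e_t\leq\kappa^{t-T}e_T$ for all $t\geq T$. Since $e_0,\dots,e_{T-1}$ are finitely many finite numbers, there is a constant $C>0$ with $e_t\leq C\kappa^{t}$ for all $t\geq 0$, which is exactly linear (geometric) convergence of the expected weighted error.

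I do not expect a real obstacle in this lemma: the only care needed is bookkeeping — verifying that $\kappa$ may be taken strictly between $C_1$ and $1$ (which rests on $C_1<1$ from Lemma \ref{lemma:expectnorm}, ultimately on $\varepsilon/\pi<2$ under Eq. \eqref{eq:step}) and that the quadratic term $\Gamma_1 e_{t-1}^2$ becomes negligible once $e_{t-1}$ is small. In fact a cruder route already proves the statement, since $e_t\leq\big(2(1-\delta)+\alpha M\big)^{1/2}\mathbb{E}[\norm{x(t)-x^*}]$ together with Theorem \ref{linconv} gives geometric decay directly; the refined recursion from Lemma \ref{linquad} is used here mainly to keep the constants in a form suited to the local quadratic phase of Theorem \ref{localquad}.
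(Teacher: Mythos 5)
Your proposal is correct, but your primary argument takes a different route from the paper's. The paper proves this lemma by exactly the ``cruder route'' you mention in passing: it bounds $\norm{D(t-1)^{1/2}(x(t)-x^*)}^2\leq\big(2(1-\delta)+\alpha M\big)\norm{x(t)-x^*}^2\leq\frac{2(2(1-\delta)+\alpha M)}{\alpha m}\big(F(x(t))-F^*\big)$ via strong convexity and Lemma \ref{lemma 1}, takes expectations, invokes the linear decay of $\mathbb{E}[F(x(t))-F^*]$ from Theorem \ref{linconv}, and finishes with Jensen's inequality to pass from the second moment to the first; this yields the explicit rate $\big((1-\beta)^{1/2}\big)^t$ with an explicit constant. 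Your main argument instead feeds the recursion of Lemma \ref{linquad} into an asymptotic-contraction scheme: since $\Gamma(t)\downarrow C_1<1$ and $e_{t-1}\to 0$, eventually $\Gamma_1 e_{t-1}+\Gamma(t)\leq\kappa<1$ and the recursion contracts geometrically. That argument is valid, but note that it is not self-contained: to kill the quadratic term you must first establish $e_t\to 0$, and you do so precisely via the paper's direct bound, so the direct bound is doing the essential work either way. What the recursion route buys is an asymptotic rate approaching $C_1$, which may differ from $(1-\beta)^{1/2}$, at the cost of a non-explicit threshold $T$ and a non-explicit leading constant; what the paper's route buys is a fully explicit global bound, which is also what Theorem \ref{localquad} later needs in order to locate the quadratic-convergence window. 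Either way the lemma is proved, and your aside identifying the direct route shows you saw the paper's argument as well.
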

\begin{proof}
By using the Taylor's theorem and the strong convexity of the objective function $F(x(t))$, we have 
\[F(x(t))\geq F^*+g(x^*)(x(t)-x^*)+\frac{\alpha m}{2}\norm{x(t)-x^*}^2,\]where $\alpha m$ is the lower bound on the eigenvalues of $H(t)$ [c.f. Lemma \ref{lemma 1}].  We note that $g(x^*)=0$, therefore \[\norm{ x(t)-x^*}^2\leq\frac{2}{\alpha m}\big(F(x(t))-F^*)\big).\] By multiplying both sides by $\norm{ D(t-1)^{1/2}}$ and using the Cauchy-Schwarz inequality and \sf{Lemma \ref{lemma 1} to bound $\norm{ D(t-1)^{1/2}}$,} we obtain \[\norm{ D(t-1)^{1/2}\big(x(t)-x^*\big)}^2\leq\norm{ D(t-1)^{1/2}}^2\norm{ x(t)-x^*}^2\leq\frac{2\big(2(1-\delta)+\alpha M \big)}{\alpha m}\big(F(x(t))-F^*\big).\]
We next take expectation \sf{on both sides} of the previous inequality and apply the result of Lemma \ref{linconv} to obtain
\begin{align*}&\mathbb{E}\Big[\norm{ D(t-1)^{1/2}\big(x(t)-x^*\big)}^2\Big]\leq\frac{2\big(2(1-\delta)+\alpha M \big)}{\alpha m}\mathbb{E}\Big[\big(F(x(t))-F^*)\big)\Big]\\&\leq\frac{2\big(2(1-\delta)+\alpha M \big)\big(F(x(0))-F^*\big)}{\alpha m}(1-\beta)^t.\end{align*} Employing the Jensen's inequality for expectations yields \begin{align*}&\Bigg(\mathbb{E}\Big[\norm{ D(t-1)^{1/2}\big(x(t)-x^*\big)}\Big]\Bigg)^2\leq\mathbb{E}\Big[\norm{ D(t-1)^{1/2}\big(x(t)-x^*\big)}^2\Big]\\&\leq\frac{2\big(2(1-\delta)+\alpha M \big)\big(F(x(0))-F^*\big)}{\alpha m}(1-\beta)^t.\end{align*} By taking square root \sf{on both sides} of the previous relation we complete the proof. 

\end{proof}
\begin{theorem}\label{localquad}
Consider the asynchronous network Newton iterate as in Algorithm \ref{async NN} and recall the definition of $\Gamma_1$ and $\Gamma(t)$ and $C_3$ from Lemma \ref{linquad}, then for all $t$ with
 \begin{align}t> \frac{4\ln{\frac{1-C_1}{C_3C_1}}}{\ln{(1-\beta)}}+2, \label{tbound}\end{align}
there exists  $0<\theta<\frac{1-\Gamma(t)}{\Gamma_1\Gamma(t)}$, such that the sequence $\mathbb{E}\left[\big\Vert D(t-1)^{1/2}\big(x(t)-x^*\big)\big\Vert\right]$ satisfies
\begin{equation}
\theta\Gamma(t)\leq\mathbb{E}\left[\big\Vert D(t-1)^{1/2}\big(x(t)-x^*\big)\big\Vert\right]<\frac{\theta}{\theta\Gamma_1+1}, \label{eq:condition}
\end{equation}
and decreases with a quadratic rate in expectation in this interval.
\end{theorem}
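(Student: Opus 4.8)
The plan is to read Theorem~\ref{localquad} directly off the scalar recursion of Lemma~\ref{linquad}. Write $e_t=\mathbb{E}\big[\norm{D(t-1)^{1/2}(x(t)-x^*)}\big]$, so that Lemma~\ref{linquad} says
\begin{equation*}
e_t\leq\Gamma_1 e_{t-1}^2+\Gamma(t)\,e_{t-1},\qquad \Gamma(t)=C_1\big(1+C_3(1-\beta)^{(t-2)/4}\big).
\end{equation*}
The first step is purely algebraic: verify that the threshold \eqref{tbound} is \emph{exactly} the condition $\Gamma(t)<1$. Since $C_1\in(0,1)$ by Lemma~\ref{lemma:expectnorm}, $C_3>0$, and $0<\beta<1$ by Theorem~\ref{linconv}, one has $\Gamma(t)<1$ iff $C_1C_3(1-\beta)^{(t-2)/4}<1-C_1$ iff $(1-\beta)^{(t-2)/4}<\frac{1-C_1}{C_1C_3}$; taking logarithms and dividing by $\ln(1-\beta)<0$ (which reverses the inequality) gives precisely $t>\frac{4\ln\frac{1-C_1}{C_1C_3}}{\ln(1-\beta)}+2$. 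Hence, for every $t$ satisfying \eqref{tbound}, the quantity $\frac{1-\Gamma(t)}{\Gamma_1\Gamma(t)}$ is a well-defined strictly positive number, so the interval of admissible $\theta$ is nonempty.

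Next I would fix $\theta\in\big(0,\tfrac{1-\Gamma(t)}{\Gamma_1\Gamma(t)}\big)$ and check that the window $\big[\theta\Gamma(t),\tfrac{\theta}{\theta\Gamma_1+1}\big)$ appearing in \eqref{eq:condition} is nonempty: $\theta\Gamma(t)<\frac{\theta}{\theta\Gamma_1+1}$ is equivalent to $\Gamma(t)(\theta\Gamma_1+1)<1$, i.e.\ to $\theta\Gamma_1\Gamma(t)<1-\Gamma(t)$, i.e.\ to $\theta<\frac{1-\Gamma(t)}{\Gamma_1\Gamma(t)}$, which holds by the choice of $\theta$. This is what makes the ``there exists $\theta$'' clause meaningful, and it is the only place the bound \eqref{tbound} is needed.

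The heart of the proof is then the quadratic-decrease step. Assume the weighted error at the previous step lies in the window, $\theta\Gamma(t)\leq e_{t-1}<\frac{\theta}{\theta\Gamma_1+1}$. From the lower endpoint, $\Gamma(t)\leq e_{t-1}/\theta$, hence $\Gamma(t)e_{t-1}\leq e_{t-1}^2/\theta$; substituting this into the Lemma~\ref{linquad} recursion absorbs the linear term into a quadratic one,
\begin{equation*}
e_t\leq\Gamma_1 e_{t-1}^2+\tfrac1\theta e_{t-1}^2=\Big(\Gamma_1+\tfrac1\theta\Big)e_{t-1}^2=\tfrac{\theta\Gamma_1+1}{\theta}\,e_{t-1}^2,
\end{equation*}
a genuine quadratic contraction estimate $e_t\leq C\,e_{t-1}^2$ with $C=\frac{\theta\Gamma_1+1}{\theta}$. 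From the upper endpoint, $e_{t-1}<\frac{\theta}{\theta\Gamma_1+1}=1/C$, so $Ce_{t-1}<1$ and therefore $e_t\leq(Ce_{t-1})e_{t-1}<e_{t-1}$: the error strictly decreases, it stays below $\frac{\theta}{\theta\Gamma_1+1}$ (an endpoint that does not depend on $t$), and since $\theta\Gamma(t)$ is nonincreasing in $t$ ($\Gamma(\cdot)$ being a decreasing exponential), the window is forward-invariant, so once the iterates enter it the quadratic decrease persists. That the error does enter this window — so that \eqref{eq:condition} holds for $e_t$ itself — follows from Lemma~\ref{linconvX}, which gives $e_t\to0$ geometrically: for all $t$ additionally satisfying $e_t<\frac{1-\Gamma(t)}{\Gamma_1}$ one may take $\theta\in\big(\tfrac{e_t}{1-\Gamma_1 e_t},\tfrac{e_t}{\Gamma(t)}\big]$, which is a nonempty subinterval of $\big(0,\tfrac{1-\Gamma(t)}{\Gamma_1\Gamma(t)}\big)$ precisely because $e_t<\frac{1-\Gamma(t)}{\Gamma_1}$, and which places $e_t$ inside $\big[\theta\Gamma(t),\tfrac{\theta}{\theta\Gamma_1+1}\big)$.

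I expect the main obstacle to be not any single estimate but the bookkeeping around the time-dependent window: making precise the sense in which ``the sequence decreases with a quadratic rate in this interval,'' checking forward-invariance (constant upper endpoint, shrinking lower endpoint), and verifying that below the window the linear term of Lemma~\ref{linquad} dominates so that $\big[\theta\Gamma(t),\tfrac{\theta}{\theta\Gamma_1+1}\big)$ is genuinely the ``quadratic phase.'' All the inequalities linking \eqref{tbound}, $\Gamma(t)<1$, nonemptiness of the $\theta$-range, and the contraction factor $C=\Gamma_1+\frac1\theta$ must be threaded together consistently, but each one individually is elementary.
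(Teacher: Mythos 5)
Your proposal is correct and follows essentially the same route as the paper's proof: verify that the threshold \eqref{tbound} is exactly the condition $\Gamma(t)<1$, use the lower endpoint $\theta\Gamma(t)\leq \mathbb{E}\big[\norm{D(t-2)^{1/2}(x(t-1)-x^*)}\big]$ to absorb the linear term of Lemma~\ref{linquad} into the quadratic one with constant $C=\frac{\theta\Gamma_1+1}{\theta}$, and use the upper endpoint to get $C\,\mathbb{E}\big[\norm{D(\bar t-1)^{1/2}(x(\bar t)-x^*)}\big]<1$ so that iterating the contraction yields the doubly-exponential decay. One minor caveat: your side remark that the window is forward-invariant is not quite right --- the iterate can exit through the (shrinking) lower endpoint $\theta\Gamma(t)$, at which point the linear term can no longer be absorbed --- but this does not affect the result, since the theorem (and the paper's proof, which restricts attention to the interval $[\bar t,\bar t+l]$ on which \eqref{eq:condition} holds) only asserts quadratic decrease while the sequence remains inside the window.
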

\begin{proof}
We note that $\frac{1-\Gamma(t)}{\Gamma_1\Gamma(t)}>0$ if and only if $\Gamma(t)<1$. Next, we show that  for all $t$ satisfying Eq. (\ref{tbound}), $\Gamma(t)<1$. \\Recall the definition of $\Gamma(t)$ from Lemma \ref{linquad} , we have
\[\Gamma(t)=C_1\Big(1+C_3(1-\beta)^{\frac{t-2}{4}}\Big).\]
To have $\Gamma(t)<1$, we need
\[1+C_3(1-\beta)^{\frac{t-2}{4}}<\frac{1}{C_1},\] therefore, \[(1-\beta)^{\frac{t-2}{4}}<\frac{1-C_1}{C_3C_1}.\]
Taking logarithm to the base $1-\beta<1$ \sf{of both sides} of the above inequality flips the direction of the inequality and results in a lower bound for $t$ as \[t>4\log_{1-\beta}^{\frac{1-C_1}{C_3C_1}}+2,\] which is equal to the lower bound in Eq. (\ref{tbound}) by changing the base of the logarithm. \sf{Therefore, using the fact that $\left\{\big\Vert D(t-1)^{1/2}\big(x(t)-x^*\big)\big\Vert\right\}$ decreases linearly in expectation [c.f. Lemma \ref{linconvX}], for all iterations $t$ satisfying Eq. (\ref{tbound})  there exists $\theta$ such that $\left\{\big\Vert D(t-1)^{1/2}\big(x(t)-x^*\big)\big\Vert\right\}$ satisfies Eq. (\ref{eq:condition}). \\ We next show that within the interval given in Eq. (\ref{eq:condition}), the sequence $\left\{ \big\Vert D(t-1)^{1/2}\big(x(t)-x^*\big)\big\Vert\right\}$ decreases with a quadratic rate.}\\
For analysis simplicity we denote by $[\bar{t},\bar{t}+l]$ the interval in which Eq. (\ref{eq:condition}) is satisfied. Using the result of Lemma \ref{linquad} we have \begin{equation}\begin{aligned}&\mathbb{E}\left[\big\Vert D(\bar{t})^{1/2}\big(x(\bar{t}+1)-x^*\big)\big\Vert\right]\leq\\&\Gamma_1 \left(\mathbb{E}\left[\big\Vert D(\bar{t}-1)^{1/2}\big(x(\bar{t})-x^*\big)\big\Vert\right]\right)^2+\Gamma(t)\mathbb{E}\left[\big\Vert D(\bar{t}-1)^{1/2}\big(x(\bar{t})-x^*\big)\big\Vert\right].\label{eq:ineqE}\end{aligned}\end{equation}
\sm{We now use the left hand side of Eq. (\ref{eq:condition}) substitute the upper bound for $\Gamma(t)$ in Eq. (\ref{eq:ineqE}) and obtain}
\[\mathbb{E}\Big[\big\Vert D(\bar{t})^{1/2}\big(x(\bar{t}+1)-x^*\big)\big\Vert\Big]\leq\big(\Gamma_1+\frac{1}{\theta}\big)\Big(\mathbb{E}\Big[\big\Vert D(\bar{t}-1)^{1/2}\big(x(\bar{t})-x^*\big)\big\Vert\Big]\Big)^2.\]
By multiplying both sides of the previous inequality by $\frac{\theta\Gamma_1+1}{\theta}$, we have
\[\frac{\theta\Gamma_1+1}{\theta}\mathbb{E}\Big[\big\Vert D(\bar{t})^{1/2}\big(x(\bar{t}+1)-x^*\big)\big\Vert\Big]\leq\left(\frac{\theta\Gamma_1+1}{\theta}\mathbb{E}\Big[\big\Vert D(\bar{t}-1)^{1/2}\big(x(\bar{t})-x^*\big)\big\Vert\Big]\right)^2.\]
Applying this recursively up to any time $r\in[\bar{t},\bar{t}+l]$ \sm{and dividing both sides by $\frac{\theta\Gamma_1+1}{\theta}$} yields
\begin{equation}
\mathbb{E}\Big[\big\Vert D(r-1)^{1/2}\big(x(r)-x^*\big)\big\Vert\Big]\leq\frac{\theta}{\theta\Gamma_1+1}\Big(\frac{\theta\Gamma_1+1}{\theta}\mathbb{E}\Big[\big\Vert D(\bar{t}-1)^{1/2}\big(x(\bar{t})-x^*\big)\big\Vert\Big]\Big)^{2^{r-\bar{t}}}.\label{eq:quadx}
\end{equation}
We note that the right hand side of Eq. (\ref{eq:condition}) implies that 
$\frac{\theta\Gamma_1+1}{\theta}\mathbb{E}\Big[\big\Vert D(\bar{t}-1)^{1/2}\big(x(\bar{t})-x^*\big)\big\Vert\Big]<1$, hence Eq. (\ref{eq:quadx}) establishes the quadratic convergence rate for all $r\in[\bar{t},\bar{t}+l]$. 
\end{proof} 
\begin{figure}
  \centering
  \subfloat[Uniform activation]{\includegraphics[width=0.5\textwidth]{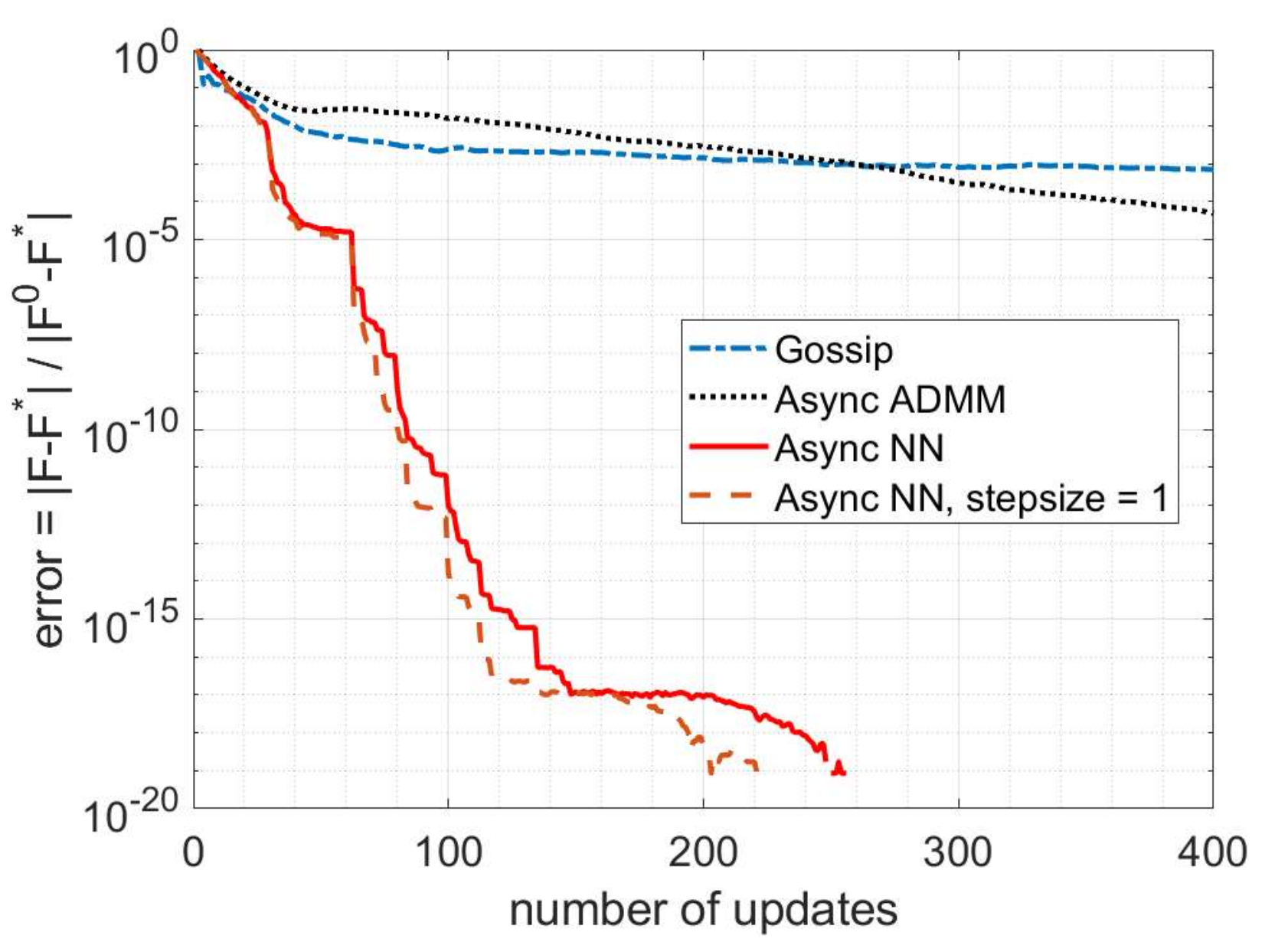}\label{fig:f1}}
  \hfill
  \subfloat[Nonuniform activation]{\includegraphics[width=0.5\textwidth]{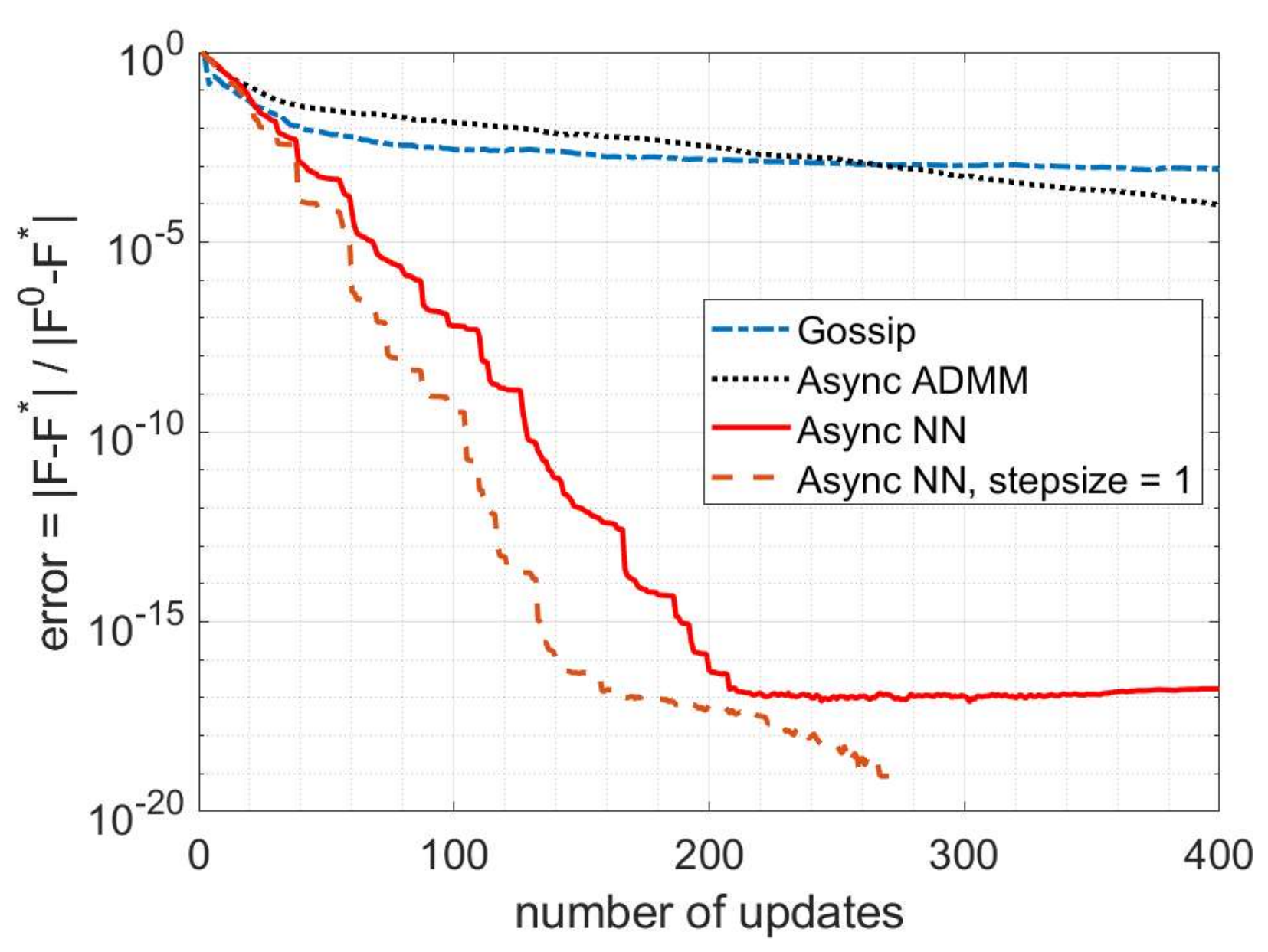}\label{fig:f2}}
  \caption{Convergence of asynchronous NN, asynchronous ADMM and gossip. Quadratic functions, complete graph.}\label{quadfig1}
\end{figure}
\re{\begin{remark}\label{localresult} According to Lemma \ref{linquad}, the expected value of the weighted error norm at each iteration, $\mathbb{E}\Big[\norm{D(t-1)^{1/2}\big(x(t)-x^*\big)}\Big]$, is upper bounded by terms that are quadratic and linear on the error associated with the previous iterate. Because of the linear term, the algorithm does not achieve the quadratic convergence to the solution as in Newton's method. However, as per Theorem \ref{localquad}, while the algorithm proceeds towards the solution, for an interval of iterations, in which the quadratic term dominates, the expected value of the weighted error norm decreases with a quadratic rate. We emphasize that in the synchronous network Newton algorithm, \cite{mok15}, the interval of quadratic convergence can be enlarged by using a better approximation of the Hessian inverse matrix, i.e., truncating the Taylor series [c.f. Eq. (\ref{eq:inverse})] with more terms,  which is associated with more communications. However, in the asynchronous algorithm, only the $0^{th}$ and $1^{st}$ terms can be used to approximate the Hessian inverse. Therefore, the length of the quadratic convergence interval only depends on function properties, network topology, and activation probabilities. Overall, our proposed method achieves linear, quadratic, and then linear rate of convergence and as it is proved in Theorem \ref{localquad} the quadratic convergence phase is not empty.
\end{remark}}
\section{Simulation Results}\label{sec:sim}
In this section, we present some numerical studies, where we compare the performance of the proposed asynchronous network Newton method \sf{with two existing totally asynchronous algorithms, asynchronous ADMM and asynchronous gossip, presented in \cite{we13} and \cite{ra10}.} It is important to note that gossip and asynchronous ADMM algorithms solve the constrained consensus problem, Eq. (\ref{consensusFormulation}), while the asynchronous network Newton algorithm solves the unconstrained penalized problem, Eq. (\ref{optFormulation}), for a fixed value of penalty constant $\alpha$. We note that the solutions of the two problems are different, resulting in different values of $F^*$. We also study the performance of our proposed algorithm on different networks. Finally, we compare the performance of asynchronous network Newton algorithm with its synchronous counterpart.   \par For all simulations we set the consensus matrix $W$ to be \re{$W=I-\frac{1}{d_{max}+1}L$, where $d_{max}$ is the largest element of the graph degree matrix $D$. The degree matrix of a graph is a diagonal matrix in which each diagonal entry is equal to the degree of the corresponding node, i.e., the total number of its neighbors. Matrix $L$ is the graph Laplacian matrix with $L=D-A$, where $A$ is the adjacency matrix with all the diagonal elements equal to zero and $A_{ij}=1$ if and only if node $i$ is connected to node $j$ and zero otherwise.} 

\subsection{Quadratic Objective Functions}
In this section, we present the simulation results for the case when the local objective functions are quadratic.
\re{We first consider a network of five agents which are connected through a complete graph, with the objective functions of the form $f_{i}(x_{i})=(x_{i}-i)^{2}$ , $i\in\{1,...,5\}$. For our asynchronous network Newton algorithm we choose the penalty parameter $\alpha=1$. We note that while the activation is uniform there is no need to scale the stepsize with the inverse of the probability matrix. In subfigure $(a)$ in Fig. \ref{quadfig1}, showing the results for uniform activation of the agents, we choose the stepsize $\varepsilon=0.9$ and in subfigure $(b)$ in Fig. \ref{quadfig1} and with nonuniform activation, we choose $\pi=\frac{2}{15}$ and $\varepsilon=0.12$ for our asynchronous network Newton method. In both uniform and nonuniform cases, the stepsize parameter is within the bounds given by Eq. (\ref{eq:step}). In both subfigures, for the gossip algorithm we use the diminishing stepsize of $\frac{1}{t}$ and for asynchronous ADMM we tune the stepsize to achieve the best performance. We run our simulation for $100$ different seeds and we plot the resulting average relative errors in the objective function value, $\frac{\vert F(x(t))-F^*\vert}{\vert F^0-F^*\vert}$. Asynchronous network Newton is the solid red line,  asynchronous gossip algorithm is the blue dot-dash line and asynchronous ADMM is the black dotted line. \sf{We also simulate the asynchronous network Newton algorithm, with $\frac{\varepsilon}{p_i}=1$ for all agents $i\in\{1, 2, ..., n\}$ [c.f. Algorithm \ref{async NN}, step 5]. This simulation is shown in the orange dash line.} We can see clearly that asynchronous network Newton outperforms the other two algorithms, which is expected due to the local quadratic rate. We have also simulated other objective function values and other network topologies and obtained similar results.}
\par \re{We next study the performance on our algorithm on  networks with different sizes and topologies and different quadratic objective functions. In subfigure (a) of Fig. \ref{compare}, we consider complete, cyclic (4-regular), path, ring, and random (connected Erdos-Renyi) graphs with $5-30$ agents. The objective function at each agent $i$ is of the form $f_i(x_i)=c_i(x_i-b_i)$, where $c_i$ and $b_i$ are integers, randomly chosen from $[1,100]$. For all the simulations in this subfigure, the penalty constant $\alpha=1$ and we choose the stepsize based on the bounds given by Eq. (\ref{eq:step}). We run the simulation for $100$ different seeds, with different objective functions, different activation patterns and different random graphs. We plot the average number of steps until the relative error is less than $\epsilon=0.01$, i.e., $\frac{\vert F(x(t))-F^*\vert}{\vert F^0-F^*\vert}<0.01$. We can see that, in all graph topologies, the number of steps, until reaching the $\epsilon-$neighborhood of the solution, increases with the number of agents in the network. This is expected due to the fact that in larger networks each agent is active less often and works with the information which is more outdated. We can also see that following the spectral gap properties, the complete graph results in smallest number of steps, while the path graph requires larger number of steps. However, in a network with complete graph, more communication is required in each step.}
 
\begin{figure}
  \centering
  \subfloat[Uniform activation]{\includegraphics[width=0.5\textwidth]{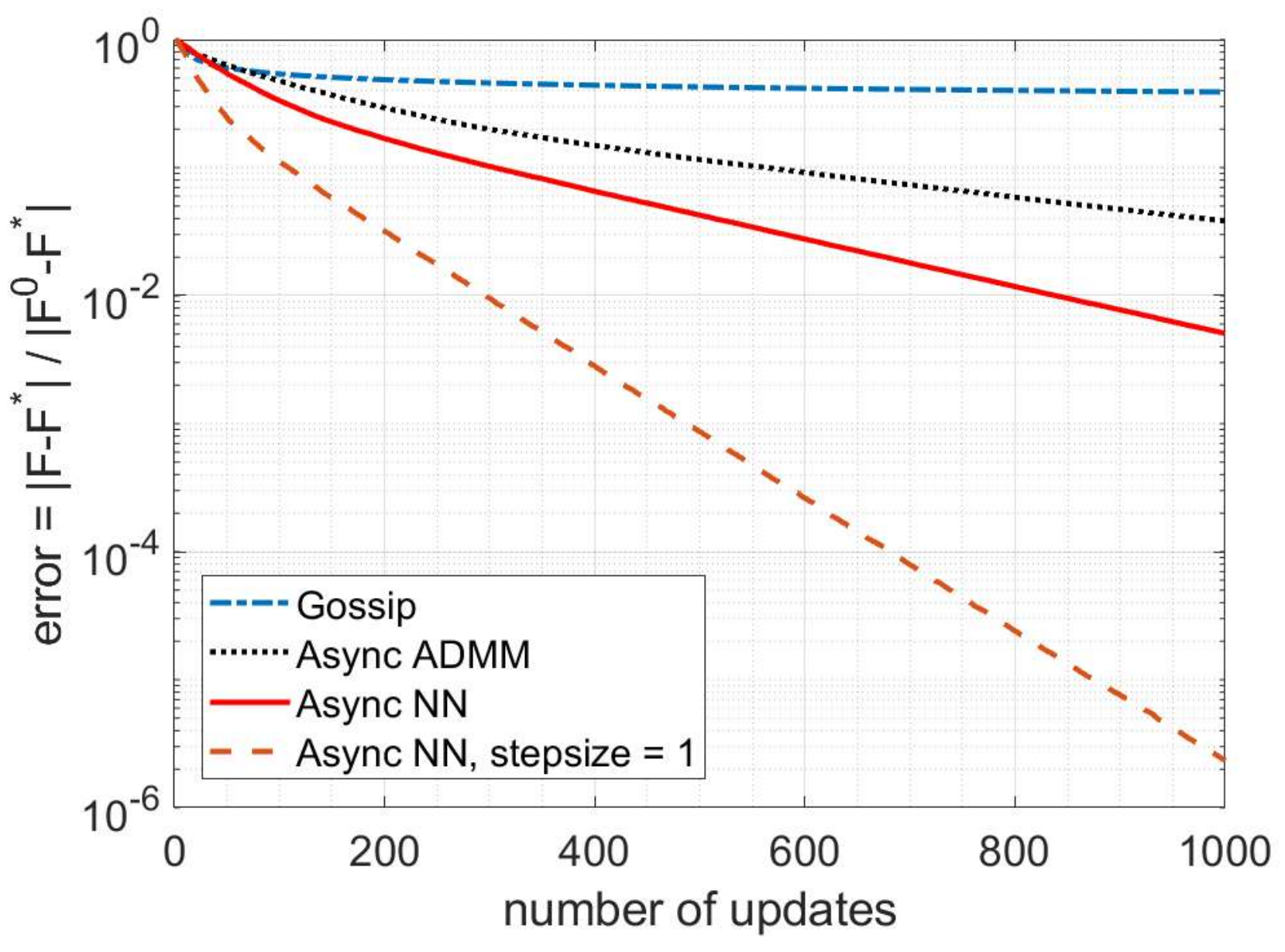}\label{fig:f1}}
  \hfill
  \subfloat[Nonuniform activation]{\includegraphics[width=0.5\textwidth]{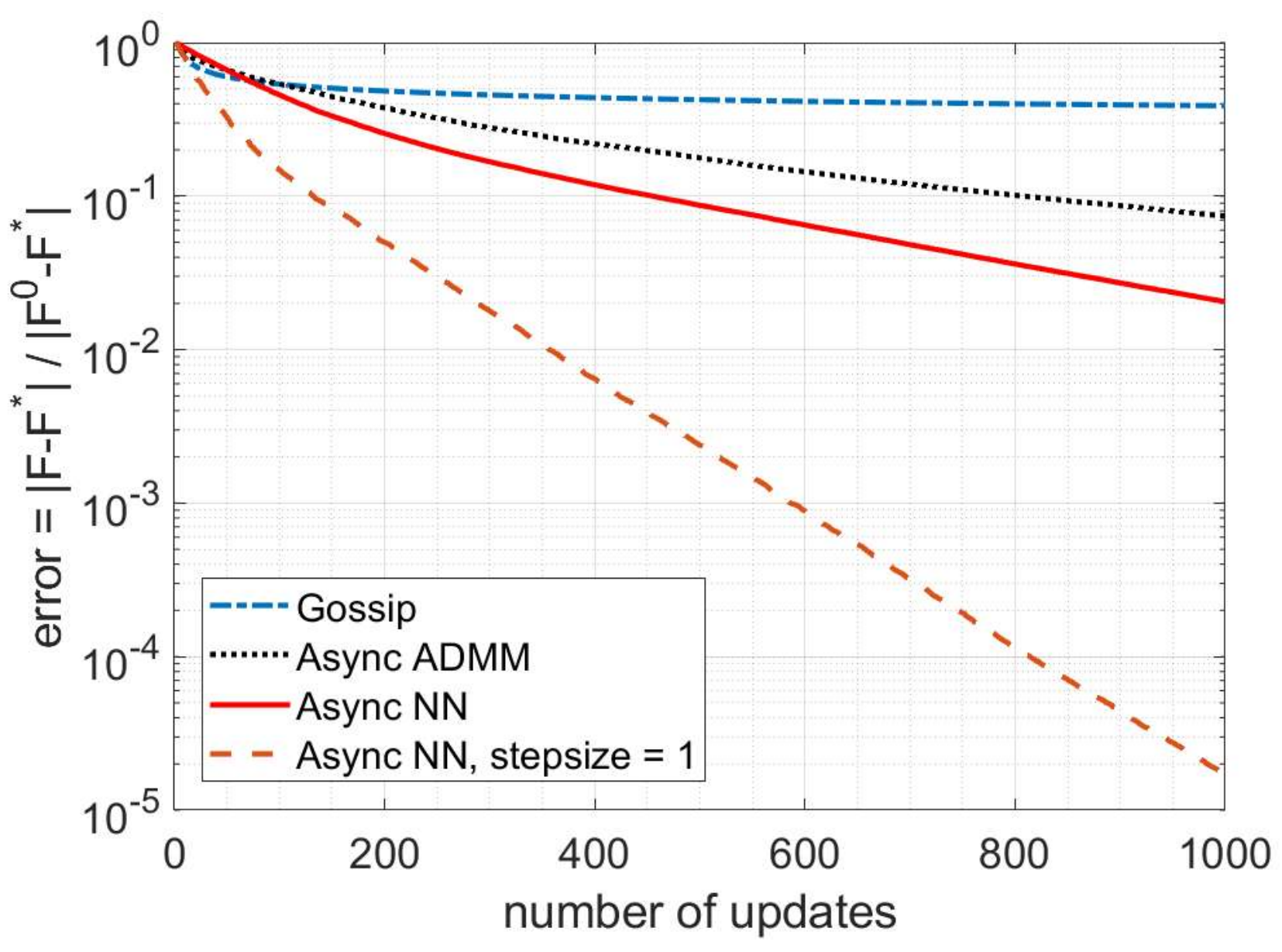}\label{fig:f2}}
  \caption{Convergence of asynchronous NN, asynchronous ADMM and gossip. Logistic regression, complete graph.}\label{logregfig1}
\end{figure}
\subsection{Non-quadratic Objective Functions} In order to study the performance of asynchronous network Newton algorithm for non-quadratic problems, we consider solving a classification problem using regularized logistic regression. We consider a problem with $K$ training samples that are uniformly distributed over $n=5$ agents in a network with complete graph. Each agent $i$ has access to $k_i=\floor{\frac{K}{n}}$ data points. This problem can be formulated as follows
\[\min_x f(x)=\frac{\upsilon}{2}\norm{x}^2+\frac{1}{K}\sum_{i=1}^n\sum_{j=1}^{k_i}\log\big[1+\exp(-v_{ij}u_{ij}x)\big],\]
where $u_{ij}$ and $v_{ij}$, $j\in\{ 1, 2, ..., k_i\}$ are the feature vector and the label for the data point $j$ associated with agent $i$ and the regularizer $\frac{\upsilon}{2}\big\Vert x\big\Vert^2$ is added to avoid overfitting. We can write this objective function in the form of $f(x)=\sum_{i=1}^n f_i(x)$, where $f_i(x)$ is defined as 
\[f_i(x)=\frac{\upsilon}{2n}\norm{x}^2+\frac{1}{K}\sum_{j=1}^{k_i}\log\big[1+\exp(-v_{ij}u_{ij}x)\big].\]
We are now able to define the local copies $x_i$ for each agent and form the penalized objective function [c.f. Eq. (\ref{eq:defF})].\\
\begin{figure}
  \centering
  \subfloat[Performance on different networks]{\includegraphics[width=0.49\textwidth]{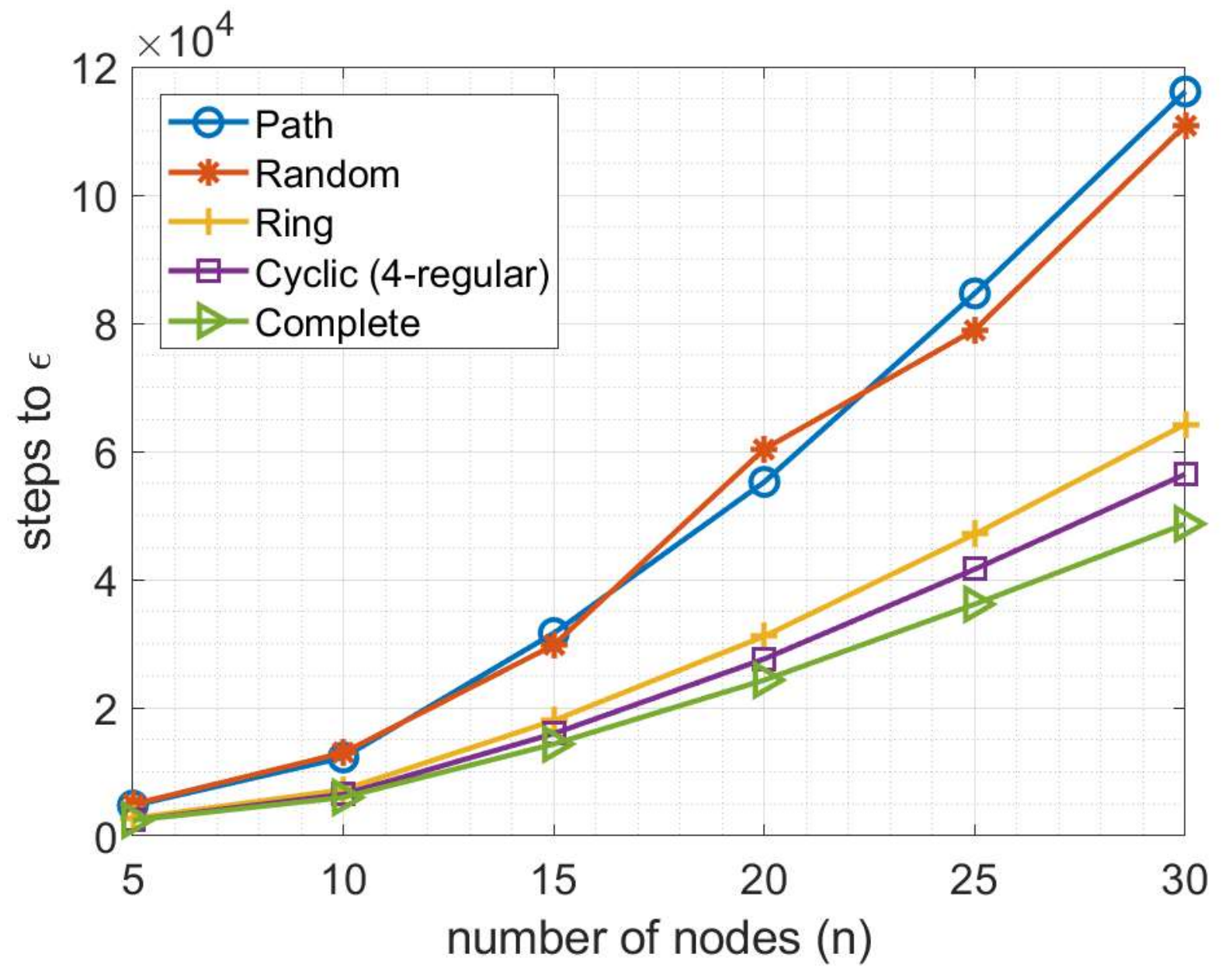}\label{fig:f1}}
  \hfill
  \subfloat[Comparison with synchronous NN ]{\includegraphics[width=0.51\textwidth]{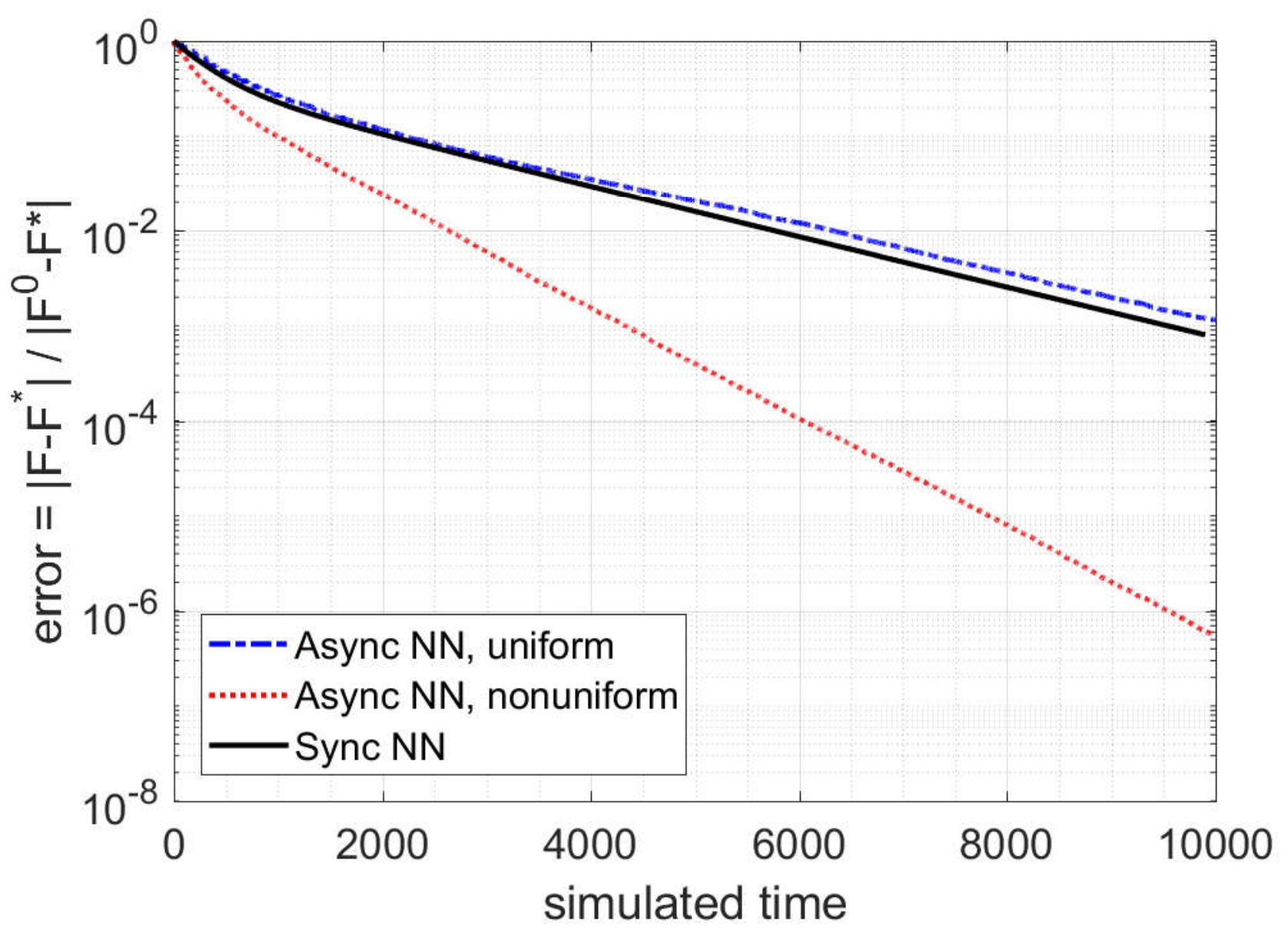}\label{fig:f2}}
  \caption{(a) Performance of asynchronous NN on different networks, quadratic cost functions. (b) Convergence of asynchronous NN and synchronous NN, Logistic regression, complete graph.}\label{compare}
\end{figure} \re{In our simulations, we use the diabetes-scale dataset \cite{chang2011libsvm},  with $768$ data points, each having a feature vector of size $8$ and a label which is either $1$ or $-1$. We distribute the data over five agents uniformly and study the performance of our algorithm on a network with complete graph. \sf{In both subfigures of Fig. \ref{logregfig1}, we use the diminishing stepsize of $\frac{1}{t}$ for gossip algorithm shown in the blue dot-dash line and for asynchronous ADMM, shown in the black dotted line, we tune the stepsize that gives the best performance. For asynchronous network Newton algorithm, shown in the red solid line, we consider the penalty coefficient of $\alpha=1$. In uniform activation case, subfigure $(a)$ in Fig. \ref{logregfig1}, we choose \re{$\varepsilon=0.35$} and for nonuniform activation case, subfigure $(b)$ in Fig. \ref{logregfig1}, we choose $\pi=\frac{2}{15}$ and \re{$\varepsilon=0.047$}, both of which are within the bounds given in Eq. (\ref{eq:step}). We also show the results with $\varepsilon=1$ for uniform activation and $\frac{\varepsilon}{p_i}=1$ [c.f. step 5 of Algorithm \ref{async NN}], for nonuniform activation in the orange dash line.} We run the simulation for $100$ different seeds and we plot the resulting average relative errors in the objective function value, $\frac{\vert F(x(t))-F^*\vert}{\vert F^0-F^*\vert}$.}
\par \re{Finally, we compare the performance of the asynchronous network Newton algorithm with its synchronous counterpart for the logistic regression problem for the same data set and network in subfigure (b) of Fig. \ref{compare}. We use the stepsize $\varepsilon=1$ for synchronous and asynchronous implementations. We run the asynchronous simulations for 100 different seeds and plot the average relative error. In the nonuniform activation case, we set the minimum activation probability to $\pi=\frac{1}{30}$. For the sake of comparison, we associate gradient evaluations with time units. We assume that one of the agents is $100$ times slower than the others.  In the synchronous algorithm, the agents need to wait for the slowest agent before proceeding to the next update, which results in slowdown at each iteration. In the asynchronous implementation,  this slowdown happens only when the slowest agent is active. The black solid line is the synchronous network Newton algorithm, the blue dash line is the asynchronous network Newton algorithm with uniform activation probabilities and the red dotted line is the asynchronous network Newton algorithm with nonuniform activation probabilities. We can see that the asynchronous network Newton algorithm with uniform activation probabilities is performing similar to its synchronous counterpart. However, if the slowest agent has a smaller activation probability, the asynchronous network Newton algorithm outperforms the synchronous implementation. Due to the nature of the Newton's method and the fact that the synchronous algorithm uses a better approximation of the Hessian inverse at each iteration, we do not expect the asynchronous algorithm to outperform the synchronous one, unless the slow agent is active less often.}
\par\re{We note that the horizontal axes of Fig. \ref{quadfig1} and Fig. \ref{logregfig1} represent the number of updates and not the iteration number. The reason for choosing this horizontal axis is to have a fair comparison, since in asynchronous ADMM and asynchronous gossip algorithms, two nodes are active and update at each iteration, while in  asynchronous network Newton algorithm one node updates its decision variable at each iteration. We also note that the running time for asynchronous ADMM algorithm is much longer than the other two algorithms, since it needs to solve a minimization problem per node activation. We notice that in gossip algorithm the active agent communicates with only one random neighbor while in our algorithm the active agent needs to communicate with all its neighbors. Therefore, each agent needs more storage if using the asynchronous network Newton algorithm. }
\section{Conclusion}\label{sec:con}
This paper presents an asynchronous distributed network Newton algorithm, in which the agents update randomly over time according to their local clocks. Such implementation removes the need for a central coordinator and enables the agents to work asynchronously from the others. We  show that the proposed method converges almost surely. \sf{We also establish global linear and local quadratic rate of convergence in expectation.} Simulation results show the convergence speed improvement of the asynchronous network Newton compared to the existing asynchronous ADMM and asynchronous gossip algorithms. Possible future work includes analysis of the convergence properties for a dynamic network and extending the convergence rate analysis to other second order asynchronous methods.
 
\bibliographystyle{plain}
\bibliography{citeNN}
\end{document}